\let\footnote=\endnote
\definecolor{amber}{rgb}{1.0, 0.49, 0.0}
\DeclareMathAlphabet{\mathpzc}{OT1}{pzc}{m}{it}
\newtheorem{assumption}{Assumption}
\newcommand*{\xbf}{{\mathbf x}}
\newcommand*{\ubf}{{\mathbf u}}
\newcommand*{\ybf}{{\mathbf y}}
\newcommand*{\zbf}{{\mathbf z}}
\newcommand*{\gbf}{{\mathbf g}}
\newcommand*{\vbf}{{\mathbf v}}
\newcommand*{\R}{{\mathbb R}}
\newcommand*{\X}{{\mathcal X}}
\newcommand*{\xhb}{\widehat{\mathbf x}}
\newcommand*{\xhbj}{\widehat{\mathbf x}^{(j)}}
\newcommand*{\E}{{\mathbb E}}
\newcommand*{\Prob}{\Pr}
 \renewcommand{\arraystretch}{1.15}
\newwrite\stored@out
\newcommand{\useStored}[1]{\csname stored@#1\endcsname}
\NewDocumentCommand{\sref}{m g}{%
  \IfNoValueTF{#2}
    {\useStored{#1}}%
    {%
      \expandafter\gdef\csname stored@#1\endcsname{#2}%
      \begingroup
        \protected@edef\stored@tmp{%
          \noexpand\gdef\expandafter\noexpand\csname stored@#1\endcsname
          {\unexpanded\expandafter{#2}}%
        }%
        \immediate\write\stored@out{\stored@tmp}%
      \endgroup
      #2%
    }%
}
\begin{document}

\title{Metric Entropy-Free Sample Complexity Bounds for Sample Average Approximation in Convex Stochastic Programming\thanks{Results (Theorems \ref{thm: suboptimality} and \ref{thm: second main theorem convex optimal rate} and their related discussions) in Section \ref{sec: SA-comparable} of this paper  extend  the conference paper  {\it Liu, H., Tong, J. (2024) New Sample Complexity Bounds for Sample Average Approximation in Heavy-Tailed Stochastic Programming}, in {\it Proceedings of the 41st International Conference on Machine Learning (ICML)}, Vienna, Austria. PMLR 235, 2024. {\color{black} In that paper, two key assumptions are imposed:
(i) $f(\cdot,\xi)$ is differentiable on $\X$ for almost every $\xi\in\Xi$, and
(ii) an exact solution to the SAA problem can be computed.
The present work relaxes both assumptions: everywhere differentiability is not always required, and our main results apply to inexact SAA solutions.
Moreover, Sections \ref{sec: Metric entropy-free}, \ref{sec: non-Lipschitzian}, and Appendix \ref{sec: lower bound} develop, respectively, light-tailed results, non-Lipschitz bounds, and lower sample complexity analysis that go beyond the conference version; the corresponding statements, analyses, and proofs  are entirely new. 
Finally, the numerical experiments reported here are also   not included in the ICML paper.} }
}

\author{
    Hongcheng Liu
    \thanks{Center for Applied Optimization \and Department of Industrial and Systems Engineering,
    University of Florida, Gainesville, FL 32611,
       (email: {\tt liu.h@ufl.edu}).}
\and
    Jindong Tong
    \thanks{Center for Applied Optimization \and Department of Industrial and Systems Engineering,
    University of Florida, Gainesville, FL 32611,
       (email: {\tt jindongtong@ufl.edu}). }
}

\maketitle


\begin{abstract}
{This paper studies     sample average approximation (SAA)     in  solving  convex or strongly convex stochastic programming (SP) problems.  In estimating   SAA's sample efficiency, the state-of-the-art sample complexity bounds entail metric entropy terms (such as the logarithm of the feasible region's covering number), which often grow polynomially with  problem dimensionality.  While it has been shown that metric entropy-free complexity rates are attainable  under a  uniform Lipschitz  condition,   such an assumption can be overly critical for many important SP problem settings. In  response, \textcolor{black}{\label{abs rev 1}{this paper  presents   metric entropy-free  sample complexity bounds for the SAA under  standard SP assumptions}}---in the absence of the uniform Lipschitz condition. \textcolor{black}{\label{abs rev 2}{For a $d$-dimensional   problem,}} the new results often lead to an $O(d)$-improvement in the complexity rate compared with the state-of-the-art. From the newly established complexity bounds, an important revelation is that SAA and the canonical stochastic mirror descent (SMD) method, two mainstream solution approaches to SP, entail almost identical rates of sample efficiency,  lifting a theoretical discrepancy of  SAA from  SMD  also by a factor of $O(d)$. Furthermore, this paper explores  non-Lipschitzian scenarios where  SAA maintains provable efficacy but the corresponding results for  SMD remain mostly unexplored, indicating  the potential of  SAA's better applicability   in some irregular settings. The results of our  numerical experiments  align with our theoretical findings. }
\end{abstract}
\keywords{Sample average approximation, heavy tails, stochastic programming, dimensionality} 

\section{Introduction}\label{sec: intro}
This paper is focused on a convex or strongly convex stochastic programming (SP) problem of the following form:
\begin{align}
\min_{\xbf\in\X}\,F(\xbf):=\E[f(\xbf,\xi)],\label{Eq: SP problem statement}
\end{align}
where we let  $(\Omega, \mathcal{F}, \Prob)$ be a complete probability space, $\Xi$ is a Borel subset of $\R^m$, $\xi:(\Omega,\mathcal F)\to(\Xi,\mathcal B(\Xi))$ is an $\Xi$-valued random vector of problem parameters, and $P$ denotes the  induced probability distribution of $\xi$. The cost function $f: B \times \Xi \rightarrow \R$ is deterministic and   $\big(\mathcal B(B)\otimes\mathcal B(\Xi)\big)$-measurable   for some open and convex set $B \subseteq \R^d$, where $d$ is a positive integer denoting the problem dimensionality. The feasible region $\X \subseteq B$ is non-empty, closed, and convex.  Throughout this paper, we suppose that the expectation $\E[f(\xbf,\xi)] = \int_{\Xi} f(\xbf,\zeta)\,\text{d}P(\zeta)$ is well-defined and finite-valued for all $\xbf \in \X$. Furthermore, we assume the population-level cost function $F$ admits a finite minimizer $\xbf^*$ on $\X$. To be formalized subsequently, we also assume that $f(\cdot, \xi)$ is   convex (and in certain settings, strongly convex) on $B$.
 For all $\xbf\in\X$, we also assume the existence of a deterministic and  universally measurable function $G:\X\times\Xi\rightarrow\R^d$   such that  $G(\xbf,\xi)\in \partial_\xbf f(\xbf,\xi)$ for almost every $\xi\in\Xi$. Here, $\partial_{\xbf}f(\xbf,\xi)$ is the subdifferential of $f(\,\cdot\,,\xi)$ at $\xbf\in\X$ for a given $\xi\in\Xi$.   

  The SP problem  above has been widely applied and  much discussed   \citep[e.g., by][to name only a few]{shapiro2021lectures,birge1997state,birge2011introduction,ruszczynski2003stochastic,lan2020first}.      Particularly, it has extensive connections with many statistical and machine learning problems \citep[as per, e.g.,][]{bartlett2006convexity,liu2019sample}. Indeed, the suboptimality gap in solving \eqref{Eq: SP problem statement} can often be interpreted as the excess risk, an important metric of generalizability in statistical and machine learning. 

This paper revisits one of the most traditional but popular solution methods for  SP called   {\it sample average approximation} (SAA). Following its literature \citep[][among many others]{dupacova1988asymptotic,ruszczynski2003stochastic,kleywegt2002sample,shapiro2021lectures,oliveira2023sample,king1991epi}, we particularly focus on  both the canonical formulation of   SAA and one of its simple, regularized variations: 
\begin{itemize}
\item[(i)] {\bf Formulation 1:} In particular, the canonical SAA is as below:
\begin{align}
\min_{\xbf\in\X}\,F_N(\xbf):=N^{-1}\sum_{j=1}^Nf(\xbf,\xi_j),\label{Eq: SAA}
\end{align}
where $\boldsymbol\xi_{1,N}:=(\xi_j:\,j=1,...,N)$ is an i.i.d. random sample of $\xi$. 
Our analysis of this formulation is  centered around   its effectiveness for strongly convex SP problems. Here,   positive integer $N$ is the sample size.
\item[(ii)]  {\bf Formulation 2:} On top of \eqref{Eq: SAA}, we also consider   the SAA variation that incorporates a Tikhonov-like regularization    in the following:
\begin{align}
\min_{\xbf\in\X}\,F_{\lambda_0,N}(\xbf):=F_N(\xbf)+\lambda_0V_{q'}(\xbf),\label{Eq: SAA-ell2}
\end{align}
where  $\lambda_0\geq 0$ is a tuning parameter, and $V_{q'}:\X\rightarrow\R_+$,     given a user's choice of $q'$-norm (with $q'\in(1,2]$), is defined as
\begin{align}
V_{q'}(\xbf)=\frac{1}{2}\Vert\xbf-\xbf^0\Vert_{q'}^2,\label{penalty term}
\end{align}
for any initial guess $\xbf^0\in\X$ (though many of our results can be easily extended to any  choice of  $\xbf^0\in\R^d$ \textcolor{black}{\label{possibly feasible x0} and possibly $\xbf^0\notin \X$}).   Particularly in the case of $q'=2$ and $\xbf^0=\mathbf 0$,   we have  $V_{q'}(\xbf)=0.5\Vert\xbf\Vert_2^2$, which becomes the  canonical Tikhonov regularization \citep{tikhonov1995numerical,golub1999tikhonov} commonly employed in ridge regression \citep{hoerl1970ridge}. The same type of regularization approach has been discussed in the SAA theories for (non-strongly) convex SP, among others,  by \citet{hu2020sample},  \citet{feldman2019high},  \citet{shalev2010learnability,shalev2009stochastic} and \citet{lei2020sharper}. Similarly in this paper, we also study SAA \eqref{Eq: SAA-ell2} in  convex SP problems.  Because  $V_{q'}$ is  $(q'-1)$-strongly convex w.r.t. the $q'$-norm, additionally incorporating this term as in \eqref{Eq: SAA-ell2}
would not add significant computational burden to solving the \textcolor{black}{SAA problem.} \textcolor{black}{\label{revision Bregman divergence} Due to   strong convexity and differentiability of $V_{q'}$, it can serve as a distance generating function for Bregman divergence \citep{dhillon2008matrix}. While we conjecture that a wider class of distance generating functions can be employed to construct regularization terms in \eqref{Eq: SAA-ell2}, we would like to leave the detailed analysis to future research.}
\end{itemize}
 Throughout this paper, we   refer to both formulations \eqref{Eq: SAA} and \eqref{Eq: SAA-ell2} as  SAA; or, if  there is ambiguity,  ``SAA  \eqref{Eq: SAA}'' and ``SAA  \eqref{Eq: SAA-ell2}'', respectively. Both versions of SAA   avoid the multi-dimensional integral involved in \eqref{Eq: SP problem statement} and thus render the   SP problem to be solvable as a ``deterministic'' nonlinear program \citep{shapiro2021lectures}, often leading to a substantial  improvement in tractability. 

 
Hereafter, for a   given  sample $\boldsymbol\xi_{1,N}$  and given scalars $q\geq 1$ and $\delta>0$,  we refer to the random vector  $\xhb:=\widetilde{\xbf}(\boldsymbol\xi_{1,N})\in\X$  as a {\it $(\delta,q)$-approximate solution} to SAA \eqref{Eq: SAA} (or SAA \eqref{Eq: SAA-ell2}) if  $\widetilde{\xbf}:\,\Xi^N\rightarrow\X$ is a deterministic and $\mathcal B(\Xi^N)$-measurable function
such that
\begin{align}
 \left\langle \mathbf g,\,\xbf-\widetilde{\xbf}(\boldsymbol\xi_{1,N})\right\rangle\geq -\delta\cdot \left\Vert\xbf-\widetilde \xbf(\boldsymbol\xi_{1,N})\right\Vert_q,~~\forall \xbf\in\X,\label{define solution}
\end{align}
for some $\gbf\in \partial F_N(\widetilde{\xbf}(\boldsymbol\xi_{1,N}))$ (or $\gbf\in \partial F_{\lambda_0,N}(\widetilde{\xbf}(\boldsymbol\xi_{1,N}))$, respectively).   Here $\partial F_N(\xbf)$ denotes the subdifferential of $F_N$ at  $\xbf\in\X$. Clearly, under convexity, $\widetilde{\xbf}(\boldsymbol\xi_{1,N})$ must also satisfy the following:
  \begin{align}
  \begin{cases}
  ~~~~~~\,F_N(\xbf)-F_N(\widetilde{\xbf}(\boldsymbol\xi_{1,N}))\geq -\delta\cdot \Vert \xbf-\widetilde{\xbf}(\boldsymbol\xi_{1,N})\Vert_q,~\forall\xbf\in\X,&\text{for SAA  \eqref{Eq: SAA}};
  \\[5pt]
  F_{\lambda_0,N}(\xbf)-F_{\lambda_0,N}(\widetilde{\xbf}(\boldsymbol\xi_{1,N}))\geq -\delta\cdot \Vert \xbf-\widetilde{\xbf}(\boldsymbol\xi_{1,N})\Vert_q,~\forall\xbf\in\X,&\text{for SAA \eqref{Eq: SAA-ell2}}.
\end{cases}\label{suboptimality in computing SAA}
  \end{align}
  By \eqref{suboptimality in computing SAA}, if  $\delta=0$,   then $\xhb:=\widetilde{\xbf}(\boldsymbol\xi_{1,N})$ becomes an exact optimal solution to SAA, denoted by $\xhb^{\textnormal{opt}}$ hereafter.
  
 The measurability of    optimal solutions to SAA has been previously studied \citep[e.g., by][]{shapiro2021lectures,rockafellar2009variational,kratschmer2023first}. Meanwhile, the quality of solution $\xhb$ in approximating a solution to the genuine SP problem in \eqref{Eq: SP problem statement} has also been much studied  \citep[e.g., by][]{artstein1995consistency, dupacova1988asymptotic,king1993asymptotic,king1991epi,pflug1995asymptotic,pflug1999stochastic,pflug2003stochastic,shapiro1989asymptotic,shapiro1993asymptotic,shapiro2003monte,shapiro2021lectures,liu2016dimensionality,liu2022high}.  \textcolor{black}{\label{SAA nonconvex R1} Many of these results are general enough to apply to inexact solutions of SAA problems in solving nonconvex stochastic programs.} 
 Following many works in this body of literature, this  current paper  is focused on SAA's (non-asymptotic) sample complexity; namely, how large the sample size $N$ should be in order to ensure that   $\xhb$ approximates an optimal solution $\xbf^*$ to \eqref{Eq: SP problem statement}, meeting the user-prescribed thresholds for  accuracy and probability. Although this is a well-visited topic,  existing sample complexity bounds  commonly carry an undesirable term of metric entropy, such as the example below. 


 \smallskip
 
 {\bf Typical non-asymptotic sample complexity under light tails \citep[when results, e.g., by][are applied to our settings]{shapiro2021lectures,shapiro2003monte,shapiro2005complexity}:}  {\it 
Under the 
 Lipschitz assumption that,  for all $\xbf,\ybf\in\X$ and   every $\xi\in\Xi$,
 \begin{align}
\vert f(\xbf,\xi)-f(\ybf,\xi)\vert\leq M(\xi)\cdot \Vert\xbf-\ybf\Vert_q,\label{Lipschitz condition}
\end{align}
where $M:\Xi\rightarrow\R_+$ is some deterministic and measurable function and $\Vert\cdot\Vert_q$ is the $q$-norm ($q\geq 1$), the optimal solution $\xhb^{\textnormal{opt}}$ to   SAA   \eqref{Eq: SAA} satisfies the following: For any given $\epsilon>0,\,\,\beta\in(0,1)$:
\begin{align}
\Prob\left[F(\xhb^{\textnormal{opt}})-F(\xbf^*)\leq \epsilon\right]\geq 1-\beta, ~~\text{if }N\geq O\left(\frac{\varphi^2 \mathcal D_q^2}{\epsilon^2} \left[    \Gamma_\epsilon(\X)+\ln\frac{1}{\beta}\right] \right).\label{summary typical results}
\end{align}
Here, $ \Gamma_{\epsilon}(\X)$ \textcolor{black}{may be} the logarithm of the covering number of $\X$---one common form of the said metric entropy, $\mathcal D_q$ is the $q$-norm diameter of $\X$, and $\varphi$  is the parameter related to the sub-Gaussian   distribution assumed for    $M(\xi)$.} 

 \smallskip

 Other than the logarithm of covering number above, in some other existing works,   metric entropy  may be calculated in different ways, including the ``generic chaining''-based complexity measures as in the analysis by  \cite{oliveira2023sample}. Regardless of how the metric entropy is quantified, it generally causes the sample complexity bounds to exhibit an elevated  dependence on problem dimensionality $d$.  For example,  because the covering number grows exponentially with $d$  in general, the logarithm of covering number $\Gamma_{\epsilon}(\X)$ is then polynomial in $d$, leading to the following more explicit representation of the complexity rate than \eqref{summary typical results}:
\begin{align}
O\left(\frac{\varphi^2 \mathcal D_q^2}{\epsilon^2} \left[   d\cdot  \ln\left(\frac{\mathcal D_{q}\varphi}{\epsilon}\right)+\ln\frac{1}{\beta}\right] \right).\label{reduced rate}
\end{align}
Changing ways to account for the metric entropy, such as via the said ``generic chaining''-based argument, does not avoid the aforementioned polynomial growth rate with $d$, except when some special   structural assumptions hold (e.g.,  the feasible region is a simplex). While  uniform stability  arguments for SAA have led to sample bounds that are free of  metric entropy terms,  these arguments rely on a uniform Lipschitz condition, which is often restrictive for many SP problems. (See Section \ref{sec: related results} for more detailed discussions.)

The influence of  metric entropy such as in \eqref{summary typical results} (and thus in \eqref{reduced rate}) seems particularly overt when  SAA is compared with the stochastic mirror descent (SMD) methods (also known as the \textcolor{black}{robust stochastic approximation} or the stochastic first-order methods) as discussed, e.g., by \cite{nemirovski2009robust,ghadimi2013stochastic}, and \cite{lan2020first}.  SMD is a  mainstream alternative  to   SAA in solving an SP problem. For a convex or strongly convex SP problem under comparable, and sometimes  weaker, assumptions than those by the existing   SAA theories,   SMD can often achieve metric entropy-free sample complexity rates.  As a consequence, comparisons between  the current theories for both methods would suggest a significant performance gap  in sample efficiency---SMD would be substantially more efficient than    SAA by a margin of order $O(d)$.  With such a stark contrast, one would  expect SMD  to exhibit a significantly better solution quality   than  SAA in practice, when both methods are operating under the same sample size constraints. Yet, this theoretical discrepancy, though being persistent in the literature, seems to have never been confirmed in empirical studies. On the contrary, in many reported experiments \citep[such as  those  by][]{nemirovski2009robust},  SAA achieves comparable, if not better, solution accuracy than  SMD, given the same sample size. The aforementioned inconsistency between the theoretical predictions and numerical results indicates a critical   literature gap, to which this paper seeks to respond by studying a directly related    question:

\smallskip

{{\bf The open question of our focus:} \it   Under typical assumptions in the SP literature, does SAA, or its simple variations, admit sample complexity bounds that are completely free from any  quantification of   metric entropy?}

\smallskip

\noindent  As mentioned, while this   question has been studied under a uniform Lipschitz condition \citep[e.g., by][]{bousquet2002stability,shalev2009stochastic,shalev2010learnability,feldman2018generalization,feldman2019high}, this assumption tends to be restrictive for many important SP formulations such as stochastic linear programming with Gaussian cost coefficients. (See Section \ref{sec: related results} for more discussions).  In a standard SP setting---which is typically in the absence of the uniform Lipschitz condition or its comparable variations---the said research question remains largely open. In response, this current paper presents perhaps the first  affirmative answer  through the following three sets of results on $\xhb$ with some   small enough  $\delta\geq 0$ as defined in \eqref{define solution}:
\begin{itemize}
\item[(a)] We show, perhaps for the first time, that   SAA's sample complexity matches     that of the canonical SMD \citep{nemirovski2009robust,ghadimi2013stochastic,lan2020first}, under comparable assumptions commonly imposed for the latter.  More specifically, when the subgradient 
 of $f(\cdot,\xi)$  admits a bounded variance of no more than  $\sigma_p^2$---with the underlying randomness being heavy-tailed---and the population-level objective function $F$ consists of an $\mathcal L$-smooth term and an $\mathcal M$-Lipschitz term, we prove in Theorems \ref{thm: suboptimality}  and \ref{thm: second main theorem convex optimal rate} of Section \ref{sec: SA-comparable} that, for any given $\epsilon\in(0,\,1]$, 
\begin{multline}
\mathbb E[F(\xhb)-F(\xbf^*)]\leq \epsilon,
\\\text{if~}N\geq \begin{cases}
O\left( \max\left\{\frac{\mathcal L}{\mu},~\frac{\sigma_p^2+\mathcal M^2}{ \mu\cdot \epsilon}\right\}\right)&\text{for SAA \eqref{Eq: SAA} in  $\mu$-strongly convex SP};
\\[8pt]
O\left(\frac{V_{q'}(\xbf^*)}{q'-1}\cdot \max\left\{  \frac{\mathcal L }{\epsilon},~\frac{\sigma_p^2+\mathcal M^2}{\epsilon^2}   
\right\}\right)&\text{for SAA \eqref{Eq: SAA-ell2} in (non-strongly) convex SP},
\end{cases}\label{summary first result}
\end{multline}
where we recall the definition of $V_{q'}$ in \eqref{penalty term}. 
This set of results is identical to the best-known sample complexity bounds for the conventional SMD, providing perhaps the first theoretical explanation on the comparable empirical performance between   SAA and SMD. A summary of their comparisons is presented in Table \ref{table summary results compare with SMD}. Furthermore,  \eqref{summary first result} also exhibits advantages when compared with the  existing SAA's sample complexity benchmark   by \cite{oliveira2023sample}. In particular,   with the  aforementioned assumption on the boundedness of variance, our results    apply to the ``most heavy-tailed'' scenario considered by \cite{oliveira2023sample}. In this scenario, as explained in Remark \ref{remark: compare to OT p=2} later, \eqref{summary first result} significantly improves over the said benchmark   in terms of the rate with   dimensionality $d$ and, for some cases, additionally in terms of the rate   with      $\mathcal M$.


\item[(b)] In Theorem \ref{Most explicit result here}  of Section \ref{sec: Metric entropy-free}, under comparable conditions as in the typical non-asymptotic results for  SAA, we prove what seems to be the first, large deviations-type sample complexity bound  completely free from any metric entropy terms in the light-tailed settings: 
Suppose that \eqref{Lipschitz condition} holds and $M(\xi)$ therein is  sub-Gaussian  with a parameter $\varphi>0$. For any $\epsilon\in(0,1]$ and $\beta\in(0,1)$, the solution $\xhb$ to SAA \eqref{Eq: SAA-ell2} satisfies that
\begin{align}
\Prob\left[F(\xhb)-F(\xbf^*)\leq \epsilon\right]\geq 1-\beta,~~
\text{if~}N\geq O\left(\frac{  \mathcal D_{q'}^2\cdot\varphi^2}{(q'-1)\cdot \epsilon^2} \cdot  \ln \left(\Phi\ln\frac{e}{\beta}\right)\cdot\ln^2\frac{\Phi}{\beta} \right),\label{metric entropy free bounds intro}
\end{align}
where $\Phi:=  {\varphi \mathcal D_{q'}\cdot e}/{[ (q'-1)\cdot \epsilon}] $ and $\mathcal D_{q'}$ is the $q'$-norm 
 diameter of the feasible region for  the same $q':\,q'\leq q$ (with $q$ defined    in \eqref{Lipschitz condition}) as a hyperparameter of SAA   \eqref{Eq: SAA-ell2}. Compared to the typical SAA results, such as those in \eqref{summary typical results} and \eqref{reduced rate}, the avoidance of  metric entropy in \eqref{metric entropy free bounds intro} leads to a significantly better growth rate with problem dimensionality $d$.  
 Also, in comparison with the existing large deviations bounds for  canonical SMD as discussed by \cite{nemirovski2009robust}, Eq.\,\eqref{metric entropy free bounds intro} presents an almost identical rate up to some (poly-)logarithmic terms. (See  Table \ref{table summary results compare with SMD} for a summary of comparisons with SMD and see further discussions    in Remark \ref{comparison results to be added} later). Our results in Proposition \ref{thm: large deviation} and Theorem \ref{Most explicit result here} (both in Section \ref{sec: Metric entropy-free}) further extend \eqref{metric entropy free bounds intro} to metric entropy-free bounds for both the sub-exponential settings (where the tails of the underlying distribution vanish at least as fast as an exponential random variable) and the heavy-tailed settings (namely, when the $p$th central moment of $M(\xi)$ is bounded for some given $p>2$). 

\item[(c)] In Theorems \ref{thm: first main theorem} and \ref{thm: first main theorem convex} of Section \ref{sec: non-Lipschitzian}, we additionally identify cases where   SAA's theoretical efficacy may even outperform those of SMD. In particular, we consider a non-Lipschitzian scenario where neither $F$ nor its (sub)gradient admits a known upper bound on the Lipschitz constant. In this case, except for some recent SAA results  by \cite{milz2023sample} that apply to a special case of our discussion, whether SMD or SAA can still be effective   seems largely unknown from the literature thus far. In response, we show that, for any $\vartheta>0$ and $\beta\in(0,1)$, when the $\mu$-strong convexity holds w.r.t.\ the $q$-norm ($q> 1$) and  the $p$th central moment of $G(\cdot,\xi)$ is bounded by $\psi_p$ (for some $p<\infty$ such that $1\leq p\leq q/(q-1)$), the   solution $\xhb$  to SAA \eqref{Eq: SAA} satisfies that
\begin{align}
\Prob\left[\Vert \xhb -\xbf^*\Vert_q^2\leq \vartheta\right]\geq 1-\beta,~~~\text{if~}N\geq O \left(\frac{p\cdot \psi_p^2}{\mu^2\cdot \vartheta}\cdot \beta^{-2/p}\right).\label{strong convex case nonlip}
\end{align}
Such a complexity bound does not depend on any Lipschitz constant.
Meanwhile,   in the  (non-strongly) convex case, for any given $\epsilon>0$, $\vartheta>0$ and $\beta\in(0,1)$, the  solution $\xhb$   to SAA \eqref{Eq: SAA-ell2}  with   $q'\in(1,\,2]$ satisfies that
\begin{align}
\Prob\left[\Vert \xhb-\xbf^*_{\epsilon}\Vert_{q'}^2\leq \vartheta\right]\geq 1-\beta,~~~\text{if~}N\geq O \left(\frac{ p\cdot\psi_{p}^2\cdot [\textcolor{black}{V_{q'}(\xbf^*)}]^2}{ (q'-1)^2\cdot   \epsilon^2\cdot \vartheta}\cdot \beta^{-\frac{2}{p}}\right),\label{general convex case nonlip}
\end{align}
where $\xbf_\epsilon^*$ is some $\epsilon$-suboptimal solution to the genuine SP problem  \eqref{Eq: SP problem statement}. Here, \eqref{general convex case nonlip} provides the sample requirement for $\xhb$ to reside in some $\vartheta$-neighborhood of an $\epsilon$-suboptimal solution with probability at least $1-\beta$ and is, again, invariant to any Lipschitz constant.
\end{itemize}

We would like to reiterate that all our theoretical results share the advantage of being independent from any form of metric entropy, implying the SAA's innate, SMD-comparable  dimension-insensitivity.    Under the common assumptions of SP literature (which typically are beyond the uniform Lipschitz condition), the same level of dimension-insensitivity, to our knowledge, has not been uncovered thus far.

While \cite{guigues2017non} and   \cite{shapiro2021lectures} have shown that  SAA's polynomial dependence on $d$   is unavoidable in general, their lower sample complexity bounds are not at odds with our findings. (See more detailed discussions   in Appendix \ref{sec: lower bound}).
{\color{black}\label{front-load} Indeed, a closer examination would reveal that quantities --- such as $\sigma_p$,   $\varphi$, $\mathcal M$, and $\psi_p$ in our bounds --- may all depend on $d$ (implicitly) at some polynomial growth rate in less favorable scenarios.} Nonetheless, our results also point to important special cases, such as  illustrated in both Remark \ref{rk: variance} and Appendix \ref{sec: lower bound}, where $d$ is allowed to be  larger  than $N$ (sometimes substantially).  

Our observations in numerical experiments with synthetic data in Section \ref{sec: simulated data} are consistent with our theoretical predictions. These observations also point to the connections between the performance of SAA and the configuration of the hyperparameters introduced through the Tikhonov-like regularization in \eqref{penalty term}.

\subsection{Organization}\label{sec: organizations} The rest of this paper is organized as follows: Section \ref{sec: related results} summarizes related works. Our main theorems   are presented in Section \ref{sec: main result} and our numerical results are provided in Section \ref{sec: simulated data} with  additional details on parameters discussed in Appendix \ref{details SMD stepsize}. 
 Section \ref{sec: ccln} concludes the paper.  Some useful lemmata, propositions, and the proof of a corollary are presented in Appendices \ref{sec: useful lemma 2} and \ref{proof of corollary 1}. Appendix \ref{sec: preliminary}   discusses some preliminary properties of $V_{q'}$ as a component of SAA   \eqref{Eq: SAA-ell2}. Appendix \ref{sec: lower bound} presents comparisons of our results with  lower sample complexity bounds.
  
\subsection{Notations}\label{sec: notations}
Denote by $\R$   the collection of all real   numbers, and by $\R_+$ and $\R_{++}$ those of the non-negative and strictly positive real numbers, respectively.  
$\mathbf 0$ is the all-zero vector of  some proper dimension.  We at times use $(x_i)$ or $(x_i:\,i=1,...,d)$ to denote a vector $
\xbf=(x_1, \cdots, x_d)^\top\in\R^d$ for convenience. 
  For a function $g$, denote by $\nabla  g$  the gradient and by $\nabla_i g$  its $i$th  element. We also denote by  $\partial g$ (or $\partial_\xbf g$)  the subdifferential of $g$ (w.r.t. its argument $\xbf$).  We let $e$ be the base of natural logarithm.  For any vector $\vbf=(v_i:\,i=1,...,d)\in\R^d$, denote  by $\Vert  {\vbf}\Vert_p:= (\sum_{i=1}^d \vert v_i\vert^p)^{1/p}$ the $p$-norm ($p\geq 1$), while  $\Vert \vbf\Vert_{\infty} = \max_{i=1,\cdots,d}\vert v_i\vert$. Meanwhile, we define the  $L^p$-norm of a random vector $\boldsymbol\zeta=(\zeta_i)\in\mathbb R^d$ to be $\Vert \boldsymbol\zeta\Vert_{L^p}:= (\sum_{i=1}^d\E_{\zeta_i} [ \vert \zeta_i\vphantom{V^{V}} \vert^p] )^{1/p}$. 
  For any random variable/vector $y$, we also denote by $\E_{y}[\,\cdot\,]$ the expectation of ``$\cdot$'' over $y$, except that $\E[\,\cdot\,]$ denotes the expectation over all the randomness in ``$\cdot$''.      Finally, ``w.r.t.''  and ``a.s.'' are short-hands for ``with respect to'' and  ``almost surely'', respectively. \textcolor{black}{\label{note dual}  For $q\geq 1$, we often denote by $\varrho$-norm the dual of $q$-norm with $\varrho=q/(q-1)$, following the usual convention that  \(\varrho=\infty\) when \(q=1\).}
\begin{table}[H]
{\color{black}
\centering
\scriptsize
\setlength{\tabcolsep}{2.5pt}
\caption{A summary of key results and their comparison with   SMD's  sample complexity results \citep[as per][]{nemirovski2009robust,lan2020first}  in  the same settings. All the presented bounds use the accuracy measure of the suboptimality gap $\epsilon$ or the distance $\vartheta$ from a target solution. This target solution is an optimal solution for most results and an $\epsilon$-suboptimal solution for Theorem \ref{thm: first main theorem convex}. Results labeled ``{\it E:}'' are bounds for controlling expected accuracy, while results labeled ``{\it P:}'' are probabilistic bounds for controlling accuracy with significance level $\beta$. In the table,  $\X^{*,\epsilon}$ is the set of $\epsilon$-suboptimal solutions,  $q'$ is a hyperparameter of SAA \eqref{Eq: SAA-ell2}, problem quantity $\Phi:= {e\varphi \mathcal D_{q'}}/{[ (q'-1)\cdot \epsilon}] $, and $\mathcal D_{q'}\approx \mathcal D_{q}$ with $\mathcal D_{q'}$ (or $\mathcal D_{q}$) being the $q'$-norm (or $q$-norm, resp.) diameter of the feasible region $\X$. Meanwhile, $V_{q'}(\xbf^*)$ is also comparable to $\mathcal D_{q}^2$ up to constants.}\label{table summary results compare with SMD}
\begin{center}
\begin{tabular}{@{}l
  >{\raggedright\arraybackslash}p{0.32\linewidth}   
  >{\raggedright\arraybackslash}p{0.30\linewidth}   
  >{\raggedright\arraybackslash}p{0.16\linewidth}   
  >{\raggedright\arraybackslash}p{0.18\linewidth}   
@{}}
\toprule
\textbf{Result} & \textbf{Assumptions} & \textbf{This paper} & \textbf{SMD} \\
\midrule

Thm.~\ref{thm: suboptimality}
& $\mathcal L+\mathcal M$ (A\ref{L-smoothness});\; Var.\,$\le \sigma_p^2$ (A\ref{assumption: Variance everywhere});\; SC (A\ref{SC condition constant all})
& \makecell[l]{\emph{E: }$\underbrace{\tfrac{L}{\mu} \vee \tfrac{\sigma_p^2{+}\mathcal M^2}{\mu\,\epsilon}}_{\text{SMD-matching}}$\\
\vspace{-1.5mm}
\\
               \emph{P: } $\underbrace{\tfrac{L}{\mu} \vee \tfrac{\sigma_p^2{+}\mathcal M^2}{\mu\,\epsilon\,\beta}}_{\text{SMD-matching}}$}
& \makecell[l]{\emph{E: }$ \tfrac{L}{\mu} \vee \tfrac{\sigma_p^2{+}\mathcal M^2}{\mu\,\epsilon} $\\
\vspace{1.5mm}
\\
               \emph{P: } $ \tfrac{L}{\mu} \vee \tfrac{\sigma_p^2{+}\mathcal M^2}{\mu\,\epsilon\,\beta}$}
 
\\[2mm]\hline\addlinespace[3pt]

Thm.~\ref{thm: second main theorem convex optimal rate}
& $\mathcal L+\mathcal M$ (A\ref{L-smoothness});\; Var.\,$\le \sigma_p^2$ (A\ref{assumption: Variance everywhere});\; Conv.\ (A\ref{GC condition constant all})
& \makecell[l]{\emph{E: }$ \dfrac{1}{q'-1}\cdot \underbrace{ V_{q'}(\xbf^*)\!\cdot\!\big(\tfrac{L}{\epsilon} \vee \tfrac{\sigma_p^2{+}\mathcal M^2}{\epsilon^2}\big)}_{\text{SMD-matching}}$\\
\vspace{-1.5mm}
\\
               \emph{P: } $ \dfrac{1}{q'-1}\cdot \underbrace{V_{q'}(\xbf^*)\!\cdot\!\big(\tfrac{L}{\epsilon} \vee \tfrac{\sigma_p^2{+}\mathcal M^2}{\epsilon^2\beta}\big)}_{\text{SMD-matching}}$}
& \makecell[l]{\emph{E: }$\mathcal D_{q}^2\!\cdot\!\big(\tfrac{L}{\epsilon} \vee \tfrac{\sigma_p^2{+}\mathcal M^2}{\epsilon^2}\big)$\\
\vspace{1.5mm}
\\
               \emph{P: } $ \mathcal D_{q}^2\!\cdot\!\big(\tfrac{L}{\epsilon} \vee \tfrac{\sigma_p^2{+}\mathcal M^2}{\epsilon^2\beta}\big)$}
\\[3mm]\hline\addlinespace[3pt]

Thm.~\ref{Most explicit result here}(b)
& Conv.;\; $M(\xi)$-Lip.\ (A\ref{assumption f lipschitz v3}(a));\; subGauss.\,$M(\xi)$ (A\ref{assumption f lipschitz v3}(b))
& \makecell[l]{\emph{P: }$\dfrac{1}{q'-1} \cdot \underbrace{\dfrac{\mathcal D^2_{q'}\,\varphi^2}{\epsilon^2} \ln^2\!\tfrac{\Phi}{\beta}}_{\substack{\text{SMD-matching}\\\text{up to $\ln^2 \Phi$}}}\cdot  \ln \!\left(\Phi\ln\tfrac{e}{\beta}\right)$}
& \makecell[l]{\emph{P: }$ \dfrac{\mathcal D_{q}^2\varphi^2}{\epsilon^{2}}\cdot \ln^2\!\tfrac{1}{\beta} $}
 
\\[2.5mm]\hline\addlinespace[3pt]

Thm.~\ref{thm: first main theorem}
& loc-SC and $p$-mom.\ at $\xbf^*$ (A\ref{SC condition constant})
& \makecell[l]{\emph{E: }$ \tfrac{p\,\psi_p^2}{\mu^2\,\vartheta}$\\
\vspace{-1.5mm}
\\
               \emph{P: } $ \tfrac{p\,\psi_p^2}{\mu^2\,\vartheta}\,\beta^{-2/p}$}& \makecell[l]{Not known}

\\[5mm]\hline\addlinespace[3pt]

Thm.~\ref{thm: first main theorem convex}
& Conv.\ (A\ref{GC condition constant all});\; $p$-mom.\ on $\X^{*,\epsilon}$ (A\ref{assumption: Variance suboptimality})
& \makecell[l]{\emph{E: }$ \dfrac{p\,\psi_p^2 V^2_{q'}(\xbf^*)}{(q'-1)^2\,\epsilon^2\,\vartheta}$\\
\vspace{-1.5mm}
\\
               \emph{P: } $ \dfrac{p\,\psi_p^2 V^2_{q'}(\xbf^*)}{(q'-1)^2\,\epsilon^2\,\vartheta}\cdot \beta^{-2/p}$}
 
 & \makecell[l]{Not known}
 
\\[6mm]
\bottomrule\addlinespace[3pt]
\end{tabular}
\vspace{1mm}

\begin{minipage}{0.96\linewidth}\footnotesize
\noindent\textbf{Abbreviations in the table:} ``A\#'' stands for Assumption \#; ``$\mathcal L+\mathcal M$'', a composite objective with an $\mathcal L$-smooth term and an $\mathcal M$-Lipschitz term; ``SC'', strong convexity; ``Var'', variance; ``Conv.'', (non-strong) convexity; ``$M(\xi)$\!-Lip.'', $M(\xi)$-Lipschitz continuity; ``loc-SC'', local strong convexity at $\xbf^*$; ``subGauss.'', sub-Gaussian; and ``$p$-mom.'', finite $p$-th moment (at $\xbf^*$ or on $\X^{*,\epsilon}$ when specified).
\end{minipage}
\end{center}

}
\end{table}

 \section{Related work}\label{sec: related results}
There is a rich body of literature on \eqref{Eq: SAA} and \eqref{Eq: SAA-ell2}. Asymptotic analysis of  SAA has been provided by, e.g., \cite{artstein1995consistency, dupacova1988asymptotic,king1993asymptotic,king1991epi,pflug1995asymptotic,pflug1999stochastic,pflug2003stochastic}, and \cite{shapiro1989asymptotic}.   {\label{additional reference covering number}\color{black}Stability estimates of SAA and the covering number approach have been studied by \cite{kavnkova1978approximative}.} Non-asymptotic (finite-sample) complexity bounds are also made available by   works of, e.g., \cite{shapiro2003monte,shapiro2005complexity} and \cite{shapiro2021lectures} in light-tailed settings and the  work of, e.g., \cite{oliveira2023sample} in heavy-tailed settings.   However, to our knowledge,  the state-of-the-art sample complexity results in both light-tailed or heavy-tailed scenarios  carry metric entropy terms that measure the complexity of the feasible region, such as $\Gamma_\epsilon(\X)$ in \eqref{summary typical results}. These terms  typically grow rapidly with $d$, elevating the dependence of  SAA's predicted sample requirement on the problem dimensionality.

Sample complexities free from the said metric entropy terms have been made available for machine learning algorithms, e.g., by \cite{shalev2010learnability,shalev2009stochastic,bousquet2002stability,feldman2018generalization,feldman2019high} and \cite{klochkov2021stability}. Their results can also provide metric entropy-free complexity bounds for   SAA  under the stipulation of   more critical conditions. More specifically, through the argument of uniform stability or its variations, 
it has been proven \citep[e.g., by][]{bousquet2002stability,shalev2010learnability,shalev2009stochastic,hu2020sample} that an optimal solution $\xhb^{\textnormal{opt}}$ to  SAA satisfies the below:
\begin{align*}
\E\left[F(\xhb^{\textnormal{opt}})-F(\xbf^*)\right]\leq \epsilon , ~~
\text{if }~
N\geq \begin{cases} O(\frac{M}{\mu\epsilon}) &\text{for SAA \eqref{Eq: SAA} in $\mu$-strongly convex SP};
\\ 
\\
O(\frac{M V_{q'}(\xbf^*)}{\epsilon^2})&\text{for SAA \eqref{Eq: SAA-ell2} in  convex SP,}
\end{cases}
\end{align*}
where $V_{q'}(\cdot)$ is defined as in \eqref{penalty term} with  $\xbf^0=\mathbf 0$,
when the following, what we call, {\it uniform Lipschitz condition}  holds: 
\label{original clarification of ULC}For all $\xbf,\,\ybf\in\X$ and $\xi\in\Xi$, there exist $q\geq q'$ and some $\xi$-invariant constant $M>0$ such that
\begin{align}
\vert f(\xbf,\xi)-f(\ybf,\xi)\vert\leq M\cdot    \Vert\xbf-\ybf\Vert_{q}.\label{lipschitz global}
\end{align}
Furthermore, high probability bounds   that are logarithmic in $1/\beta$ are also obtained  under the same   condition  \citep[e.g., by][when their results are applied to the analysis of \eqref{Eq: SAA} or \eqref{Eq: SAA-ell2}]{feldman2018generalization,feldman2019high,bousquet2002stability,feldman2018generalization,klochkov2021stability}.  
 Nonetheless,     the uniform Lipschitz condition  in \eqref{lipschitz global}   can be overly critical for many applications of the SP;  because $M$   is $\xi$-invariant, this quantity  can  be undesirably large and even unbounded under the more common conditions, such as   \eqref{Lipschitz condition}, in the SAA literature. To see this, one may consider a simple stochastic linear program of  $\min\{\E[\alpha^\top\xbf]:\,\xbf\in[-1,1]^d\}$, where $\alpha\in\R^d$ is some Gaussian random vector. While many applications can be subsumed by simple variations of this SP problem, it hardly satisfies \eqref{lipschitz global} for   a finite $M$. One may easily extend this linear program to identify typical SP problems where \eqref{lipschitz global} fails to apply. Indeed, a closer examination can reveal that \eqref{lipschitz global} is an arguably restrictive special case of \eqref{Lipschitz condition} when $M(\xi)$ in the latter is assumed with a bounded support set.  In contrast, our results are based on more flexible conditions common to the SP literature and subsume the said stochastic linear programming problem. Moreover, our results do not assume a bounded support set   for $M(\xi)$.

Some recent works on SAA for high-dimensional SP study the implications of special problem structures in reducing the growth rate of the metric entropy terms  w.r.t. their dependence on $d$. Along this direction, \cite{liu2022high,liu2019sample} and \cite{lee2023regularized} study the implications of sparsity and low-rankness. \cite{lam2022general} consider the influence of low Vapnik-Chervonenkis (VC) dimensions. \cite{bugg2021logarithmic} investigate the dimension-independent budget constraints. As is also discussed by \cite{oliveira2023sample}, when the feasible region is (representable by) a simplex, the generic chaining-based metric entropy exhibits a logarithmic growth rate with $d$. Nonetheless, to our knowledge, those results may not apply beyond the corresponding special   structural assumptions.

\cite{birge2023uses} shows that careful designs with sub-sample estimates can promote sample efficiency particularly in terms of the dependence on the number of random parameters. Yet, the current theories therein are mostly focused on some more special (but still widely applicable) SP problems. \textcolor{black}{\label{intro rev 1}\cite{guigues2017non} show that SAA's optimal cost can lead to metric entropy-free confidence bounds for the SP's optimal value. Yet, their results do not ensure the quality of an SAA  solution in minimizing the expected cost in \eqref{Eq: SP problem statement}. Metric entropy-free bounds have also been made available by \cite{rigollet2025sample} for optimal transport problems. Yet, it is currently unknown how these results can be transferred to the  problem settings of our discussion.}

This current paper frequently refers to the existing complexity bounds, e.g., by \cite{shapiro2021lectures,shapiro2003monte,shapiro2005complexity}, and \cite{oliveira2023sample} as benchmarks  in order to explain the claimed advantages of our results. Yet, it is worth noting that those existing works apply to more general settings than this paper. For instance, the SAA theories by \cite{shapiro2021lectures} and \cite{oliveira2023sample} can handle nonconvex problems. The findings by \cite{oliveira2023sample} further admit stochasticity in the feasible region.  Nonetheless, when applied to the settings of our consideration---SAA for   convex SP problems with deterministic constraints---the results by \cite{shapiro2021lectures,shapiro2003monte,shapiro2005complexity}, and \cite{oliveira2023sample} are known to be the best available benchmarks. We would like to also argue that the SP problems considered herein are still flexible enough to cover a very wide spectrum of applications, and our proof arguments, which seem to differ   from most SAA literature, may   be further extended to   nonconvex problems  and scenarios with uncertain constraints.

 \section{Main Results}\label{sec: main result}
 This section presents the formal statements of  our three sets of results. First, Section \ref{sec: SA-comparable} provides the sample complexity bounds that match   those of the canonical SMD under comparable assumptions. Second, Section \ref{sec: Metric entropy-free} shows our   large deviations-type, metric entropy-free bounds under the standard Lipschitz condition as in \eqref{Lipschitz condition}. Lastly, Section \ref{sec: non-Lipschitzian} discusses our findings in non-Lipschitzian settings. \textcolor{black}{For ease of reference, Table~\ref{table: result vs assumption} summarizes the assumptions underlying each key result.}
 
 \begin{table}[H]
 \color{black}
\centering
\scriptsize
\setlength{\tabcolsep}{3pt}
\caption{A summary of different combinations of assumptions imposed for the key results.  ``A\#'' stands for ``Assumption \#''. Each assumption appears either in the ``\emph{Regularity}'' row or the ``\emph{Randomness}'' row, characterizing the type of condition it represents. ``$\checkmark$'' indicates the corresponding assumption (in the column) is required for the result (in the row).  Convexity  everywhere on $\Xi$ (which implies A\ref{GC condition constant all}) is assumed  in Prop.~\ref{thm: large deviation}. \newline \noindent\textbf{Abbreviations of assumptions:}  ``subexp.'' stands for subexponential; ``HT'', heavy-tailed;    Others follow Table \ref{table summary results compare with SMD}. }\label{table: result vs assumption}
\begin{tabular}{@{}l*{9}{c}@{}}
\toprule
& \multicolumn{9}{c}{\textbf{Assumptions}} \\
\cmidrule(lr){2-10}
\textbf{Regularity} &
\makecell[t]{A\ref{L-smoothness}\\($\mathcal L{+}\mathcal M$)} & 
& 
\makecell[t]{A\ref{SC condition constant all}\\(SC)} &          
\makecell[t]{A\ref{GC condition constant all}\\(Conv.)} &        
\makecell[t]{A\ref{assumption f lipschitz}(a)\\($M(\xi)$-Lip.)} &
\makecell[t]{A\ref{assumption f lipschitz v2}(a)\\($M(\xi)$-Lip.)} & 
\makecell[t]{A\ref{assumption f lipschitz v3}(a)\\($M(\xi)$-Lip.)} & 
\makecell[t]{A\ref{SC condition constant}(a)\\(loc-SC)}          
\\[10pt]
\textbf{Randomness} &
& \makecell[t]{A\ref{assumption: Variance everywhere}\\(Var$\,\leq\sigma_p^2$)} & 
& & \makecell[t]{A\ref{assumption f lipschitz}(b)\\(HT $M(\xi)$)} & 
\makecell[t]{A\ref{assumption f lipschitz v2}(b)\\(subexp\,$M(\xi)$)} & 
\makecell[t]{A\ref{assumption f lipschitz v3}(b)\\(subGauss\,$M(\xi)$)} & 
\makecell[t]{A\ref{SC condition constant}(b)\\($p$-mom.\,at $\xbf^*$)} & 
\makecell[t]{A\ref{assumption: Variance suboptimality}\\($p$-mom.\,on\,$\X^{*,\epsilon}$)} 
\\
\midrule
Thm.~\ref{thm: suboptimality}  & $\checkmark$ & $\checkmark$ & $\checkmark$ & -- & -- & -- & -- & -- & -- \\
Thm.~\ref{thm: second main theorem convex optimal rate}  & $\checkmark$ & $\checkmark$ & -- & $\checkmark$ & -- & -- & -- & -- & -- \\
Thm.~\ref{Most explicit result here}(a) & -- & -- & -- & $\checkmark$ & -- & $\checkmark$ & -- & -- & -- \\
Thm.~\ref{Most explicit result here}(b) & -- & -- & -- & $\checkmark$ & -- & -- & $\checkmark$ & -- & -- \\
Thm.~\ref{thm: first main theorem}  & -- & -- & -- & -- & -- & -- & -- & $\checkmark$ & -- \\
Thm.~\ref{thm: first main theorem convex}  & -- & -- & -- & $\checkmark$ & -- & -- & -- & -- & $\checkmark$ \\
\addlinespace[2pt]
Prop.~\ref{thm: large deviation}(a) & -- & -- & -- & $\checkmark$ & $\checkmark$ & -- & -- & -- & -- \\
Prop.~\ref{thm: large deviation}(b) & -- & -- & -- & $\checkmark$ & -- & $\checkmark$ & -- & -- & -- \\
Prop.~\ref{thm: large deviation}(c) & -- & -- & -- & $\checkmark$ & -- & -- & $\checkmark$ & -- & -- \\
\bottomrule
\end{tabular}
\end{table}
 

\subsection{SMD-comparable sample complexity of  SAA}\label{sec: SA-comparable}
 
We   start by introducing our assumptions. First, we formalize the previously mentioned  structure of a composite objective function in the genuine SP problem \eqref{Eq: SP problem statement} as below:
\begin{assumption}\label{L-smoothness}For a given $q\geq 1$, let $\varrho=q/(q-1)$ (with the usual convention that \(\varrho=\infty\) when \(q=1\)). There exist two deterministic    functions, denoted by $F_1:\,\X\rightarrow\R$ and $F_2:\,\X\rightarrow\R$ such that
\begin{align*}
F(\xbf)=F_1(\xbf)+F_2(\xbf),
\end{align*}
where $F_1$ and $F_2$ satisfy  the below:
\begin{itemize}
\item[(a)]
Function $F_1$ is everywhere differentiable. Meanwhile, for   some $\mathcal L\geq 1$,  
\begin{align}
\Vert\nabla F_1(\xbf_1)-\nabla F_1(\xbf_2)\Vert_{\varrho}\leq \mathcal L\cdot \Vert\xbf_1-\xbf_2\Vert_{q},~~\forall (\xbf_1,\,\xbf_2)\in\X^2;\label{smooth inequality}
\end{align}
\item[(b)]  \textcolor{black}{There exists some  $\mathcal M\geq 1$ such that, for any given $\xbf\in\X$ and every $\gbf_{F_2}\in\{\mathbf g_F-\nabla F_1(\xbf):\, \mathbf g_F\in\partial F(\xbf)\}$,}
\begin{align}
\textcolor{black}{\Vert \gbf_{F_2}\Vert_\varrho \leq \mathcal M.}\label{lipschitz inequality}
\end{align}
\end{itemize}
\end{assumption}

We sometimes refer to this condition as ``Assumption \ref{L-smoothness} w.r.t. the $q$-norm''. Here, \eqref{smooth inequality}  means that the first component of the population-level objective function $F_1$  is $\mathcal L$-smooth; that is, it admits an $\mathcal L$-Lipschitz continuous gradient. Meanwhile, under the common assumption  that  $F_2$ is convex, \eqref{lipschitz inequality} essentially imposes that   $F_2$ is $\mathcal M$-Lipschitz continuous. Results that apply to such a composite objective function subsume the special cases of   $F$ being smooth (with $F_2\equiv0$) and   $F$ being Lipschitz (with $F_1\equiv0$). Conditions  similar to, if not more critical than, Assumption \ref{L-smoothness} have   been considered in much SP literature  \cite[such as][]{ghadimi2012optimal,ghadimi2013stochastic,nemirovski2009robust,rakhlin2011making,lan2020first,nesterov2017random}. \textcolor{black}{\label{M clarification}Compared to the uniform Lipschitz condition in \eqref{lipschitz global}---which requires the random cost function $f(\cdot,\xi)$ to be Lipschitz continuous for a $\xi$-invariant Lipschitz constant---Assumption \ref{L-smoothness} is a weaker condition. This is because the latter imposes regularities only on   the expected function $F$ instead.}

{\color{black}
\begin{remark}\label{remark smooth inequality}
A natural variant of \eqref{smooth inequality} is
\begin{equation}\label{q to q paper}
\text{for some }L'\ge 0,\qquad 
\|\nabla F_1(\xbf_1)-\nabla F_1(\xbf_2)\|_{\mathbf q}\;\le\; L'\,\|\xbf_1-\xbf_2\|_{\mathbf q},
\quad \forall\,(\xbf_1,\xbf_2)\in\X^2, 
\end{equation}
which employs the same \(\mathbf q\)-norm (\(\mathbf q\ge 1\)) on both sides, in contrast to \eqref{smooth inequality} where \(\varrho\)- and \(q\)-norms appear on different sides. 
It is straightforward to verify that \eqref{q to q paper}, for any \(\mathbf q\ge 1\), implies the existence of a H\"older-conjugate pair \((\varrho,q)\) with \(\varrho = q/(q-1)\) and $q\in[1,2]$ such that \eqref{smooth inequality} holds with \(\mathcal L \le L'\).
\end{remark}}



Our assumption on the   underlying randomness is the everywhere boundedness of the variance of some stochastic subgradient. We formalize this condition in the following, where we recall that $G$ is a deterministic and universally measurable function such that  $G(\xbf,\xi)\in \partial_\xbf f(\xbf,\xi)$ for all $\xbf\in\X$ and  almost every $\xi\in\Xi$:
\begin{assumption}\label{assumption: Variance everywhere}
\color{black}For some $p\geq 1$ and   $\sigma_p\geq 1$, the expectation   $\E[G(\xbf,\xi)]$ is well-defined and finite-valued for every $\xbf\in\X$, and
\begin{align*}\E\Big[\Vert G(\xbf,\xi)-\E[G(\xbf,\xi)]\Vert_p^2\Big]\leq \sigma_p^2,~~\forall \xbf\in\X.
\end{align*}
\end{assumption}

Hereafter, this condition is sometimes called ``Assumption \ref{assumption: Variance everywhere} w.r.t. the $p$-norm'', which is common in the SP literature, especially in the discussions of  SMD \citep[e.g., by][]{ghadimi2013stochastic}. The stipulation of $\sigma_p\geq 1$ is non-critical; it is   only for the simplification of notations in our results. 

We formalize our assumptions of strong convexity and  convexity in Assumptions \ref{SC condition constant all} and \ref{GC condition constant all}, respectively, as below:

\begin{assumption}\label{SC condition constant all}
\textcolor{black}{There exists some scalar $\mu>0$ such that $f(\,\cdot\,,\xi)-\frac{\mu}{2}\Vert\cdot\Vert_q^2$ is convex on $B$ for almost every $\xi\in\Xi$.}
\end{assumption}
This assumption essentially imposes that   $f(\,\cdot\,,\xi)$ is $\mu$-strongly convex on $\X$ w.r.t. the $q$-norm  for almost every $\xi\in\Xi$. As an immediate result of this assumption, the following inequality holds for every pair of solutions $(\xbf_1,\,\xbf_2)\in\X^2$ and  almost every $\xi\in\Xi$: 
\begin{align*}
f(\xbf_1,\xi)-f(\xbf_2,\xi)\geq \langle \gbf,\,\xbf_1-\xbf_2\rangle
+\frac{\mu}{2}\cdot \Vert\xbf_1-\xbf_2\Vert^2_q,
\end{align*}
for all $\gbf\in\partial_\xbf f(\xbf_2,\xi)$ and  some given $\mu>0$ and $q\geq 1$.

\begin{remark}\label{remark: SC condition}
We refer to the above as  ``Assumption  \ref{SC condition constant all}  w.r.t. the $q$-norm'' or ``$\mu$-strong convexity'', which is common in the SAA literature \citep[e.g., in][]{milz2023sample,shalev2010learnability}. Some SP literature \citep[e.g., by][]{ghadimi2012optimal} assumes a relatively more flexible version of strong convexity than Assumption \ref{SC condition constant all} as in the following: 
\begin{align}
\textcolor{black}{F(\xbf_1)-F(\xbf_2)\geq \langle \mathbf g_F,\,\xbf_1-\xbf_2\rangle
+\frac{\mu}{2}\Vert\xbf_1-\xbf_2\Vert^2_q,~~\text{for all~}\mathbf g_F\in\partial F(\xbf_2)~\text{and}~\text{every}~ (\xbf_1,\,\xbf_2)\in\X^2.}\label{assumption: strong convex in literature}
\end{align}
This condition is considered mostly in the discussions of   SMD. We argue that the seemingly higher stringency in Assumption \ref{SC condition constant all} relative to \eqref{assumption: strong convex in literature} does not make the SP problem much easier. Indeed,   lower complexity bounds for  SMD \citep[such as by][]{rakhlin2011making,agarwal2009information} are derived based on the identification of adversarial problems that satisfy  Assumption  \ref{SC condition constant all}. Thus,  when solving an SP problem that satisfies Assumption  \ref{SC condition constant all} instead of  \eqref{assumption: strong convex in literature}, the typical SMD schemes  cannot achieve faster sample complexity rates in general.
\end{remark}

Some results in this section (as well as in several latter parts of this paper) consider the  condition of  (non-strong) convexity below:

\begin{assumption}\label{GC condition constant all}
For  almost every $\xi\in\Xi$, function $f(\,\cdot\,,\xi)$ is convex everywhere on $B$.
\end{assumption}
As an immediate implication of this assumption,  the following inequality holds for every $(\xbf_1,\,\xbf_2)\in\X^2$, all $\gbf\in\partial_\xbf f(\xbf_2,\xi)$, and   almost every $\xi\in\Xi$: 
\begin{align}
f(\xbf_1,\xi)-f(\xbf_2,\xi)\geq \langle \gbf,\,\xbf_1-\xbf_2\rangle.\nonumber
\end{align}

We would like to compare the above with a counterpart assumption that the population-level objective $F(\cdot)$ is convex, which is, again, a common condition in the literature on  SMD \citep[e.g., by][]{nemirovski2009robust,ghadimi2013stochastic}.  Relative to this counterpart condition,  the incremental stringency in Assumption \ref{GC condition constant all} does not make the SP problems much easier; this is because, again, the adversarial problem instances  used to prove lower performance limits  for   SMD in solving the convex SP problems  \citep[such as those constructed by][]{agarwal2009information} often satisfy Assumption \ref{GC condition constant all}. 
From this analysis, one can see that switching from the assumption of $F$ being convex to Assumption \ref{GC condition constant all} does not allow   SMD to achieve a better sample efficiency in general.

We are now ready to formalize the promised  sample complexity bounds in both   strongly convex and   convex cases below.

\begin{theorem}\label{thm: suboptimality} 
Suppose that  Assumptions   \ref{L-smoothness} and \ref{SC condition constant all}  hold  w.r.t. the $q$-norm for a given $q\geq 1$, and that Assumption \ref{assumption: Variance everywhere} holds w.r.t. the $p$-norm for some $p:\,1\leq p\leq \varrho$ where $\varrho$ is the dual exponent of $q$
(with the convention $\varrho=\infty$ when $q=1$).  Then any $(\delta,q)$-approximate solution $\xhb$ to   SAA   \eqref{Eq: SAA} with $\delta\leq 1/N$ satisfies the below: There exists some universal constant $C_1>0$ such that, for any given $\epsilon>0$,
\begin{align}
 \E\left[F(\xhb)-F(\xbf^*)\right]\leq \epsilon,
~~~\text{if }N\geq C_1\cdot\max\left\{\frac{\mathcal L}{\mu},\,\frac{\color{black}\sigma_p^2+\mathcal M^2}{ \mu\epsilon}\right\};\label{thm 2 subopt expected}
\end{align}
and, meanwhile,  for any given $\epsilon>0$ and $\beta\in(0,1)$,
\begin{align}
\Prob\Big[F(\xhb)-F(\xbf^*)\leq \epsilon \Big]\geq1-\beta,
~~\text{if }N\geq C_1\cdot \max\left\{\frac{\mathcal L}{\mu},\,\frac{\color{black}\sigma_p^2+\mathcal M^2}{ \mu\epsilon\beta}\right\}.\label{thm 2 subopt}
\end{align}
\end{theorem}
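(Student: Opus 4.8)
The plan is to bound the nonnegative suboptimality $F(\xhbopt)-F(\xbf^*)$ by an inner product of the empirical gradient deviation with $\xhbopt-\xbf^*$ evaluated at the random minimizer, and then to resolve that empirical-process quantity by a \emph{leave-one-sample-out} (algorithmic-stability) argument rather than by any covering-number control; this last move is what keeps the bound free of metric entropy. Write $e_N(\xbf):=\nabla F(\xbf)-\nabla F_N(\xbf)$, which has mean zero for each fixed $\xbf$. Observe that averaging Assumption \ref{SC condition constant all} over the sample makes $F_N$ itself $\mu$-strongly convex w.r.t.\ the $q$-norm, hence $\nabla F_N$ is $\mu$-strongly monotone in the $q$-norm, and that Assumption \ref{L-smoothness} writes $F=F_1+F_2$ with $\nabla F_1$ being $\mathcal L$-Lipschitz and $\|\nabla F_2\|_{\varrho}\le\mathcal M$, where $\varrho=q/(q-1)$.

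First I would establish two preliminary estimates. By convexity of $F$ (a consequence of Assumption \ref{SC condition constant all}) and the first-order optimality of $\xhbopt$ over $\X$, $F(\xhbopt)-F(\xbf^*)\le\langle\nabla F(\xhbopt),\xhbopt-\xbf^*\rangle\le\langle e_N(\xhbopt),\xhbopt-\xbf^*\rangle$, since $\langle\nabla F_N(\xhbopt),\xhbopt-\xbf^*\rangle\le0$. Separately, adding the optimality inequalities for $\xhbopt$ (w.r.t.\ $F_N$) and for $\xbf^*$ (w.r.t.\ $F$) and inserting $\pm\nabla F_N(\xbf^*)$, strong monotonicity of $\nabla F_N$ gives the pathwise estimate $\|\xhbopt-\xbf^*\|_q\le\mu^{-1}\|e_N(\xbf^*)\|_{\varrho}$. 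Because its right-hand side is anchored at the \emph{deterministic} reference point $\xbf^*$, a moment inequality for sums of i.i.d.\ mean-zero random vectors (one of the auxiliary lemmata, together with $\|e_N(\xbf^*)\|_{\varrho}\le\|e_N(\xbf^*)\|_{p}$ as $p\le\varrho$) yields $\E\|e_N(\xbf^*)\|_{\varrho}^{2}\le C\sigma_p^2/N$, hence $\E\|\xhbopt-\xbf^*\|_q^2\le C\sigma_p^2/(\mu^2N)$; no metric entropy appears precisely because this fluctuation is evaluated at a fixed point.

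The crux will be bounding $\E[\langle e_N(\xhbopt),\xhbopt-\xbf^*\rangle]=N^{-1}\sum_{j}\E[\langle\nabla F(\xhbopt)-\nabla f(\xhbopt,\xi_j),\,\xhbopt-\xbf^*\rangle]$, in which $\xhbopt$ is correlated with every $\xi_j$. For each $j$ I would introduce the perturbed SAA solution $\xhbopt^{(j)}$ obtained by replacing $\xi_j$ with an independent copy; since $\xhbopt^{(j)}$ is independent of $\xi_j$, the $j$th summand evaluated at $\xhbopt^{(j)}$ has zero expectation, so the sum equals $N^{-1}\sum_j\E$ of the \emph{difference} of that summand at $\xhbopt$ versus at $\xhbopt^{(j)}$. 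This difference is governed by the stability radius $\|\xhbopt-\xhbopt^{(j)}\|_q$, which the same strong-monotonicity argument (now applied to the pair $\xhbopt,\xhbopt^{(j)}$) bounds by $(\mu N)^{-1}$ times a single-sample gradient-deviation norm; feeding in the $\mathcal L$-smoothness and the $\mathcal M$-bound to transport gradients between $\xhbopt$ and $\xhbopt^{(j)}$, together with the preceding bound on $\|\xhbopt-\xbf^*\|_q$, should produce an inequality of the shape $\E[\langle e_N(\xhbopt),\xhbopt-\xbf^*\rangle]\le \tfrac{C\mathcal L}{\mu N}\,\E[F(\xhbopt)-F(\xbf^*)]+\tfrac{C(\sigma_p^2+\mathcal M^2)}{\mu N}$. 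Taking $N\ge C_1\mathcal L/\mu$ absorbs the first term on the left, and then $N\ge C_1(\sigma_p^2+\mathcal M^2)/(\mu\epsilon)$ yields \eqref{thm 2 subopt expected}. I expect this to be the main obstacle: the leave-one-out estimate, the handling of the $\xhbopt$–$\xi_j$ dependence (e.g.\ by anchoring the gradient-deviation norms at $\xhbopt^{(j)}$), and — most subtly — the bookkeeping that converts the worst-case smoothness into the mild threshold $N\gtrsim\mathcal L/\mu$ rather than a multiplicative $\mathcal L/\mu$ inflation of the statistical rate, is where essentially all the effort goes.

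Finally, \eqref{thm 2 subopt} will follow from \eqref{thm 2 subopt expected} by Markov's inequality: since $F(\xhbopt)-F(\xbf^*)\ge0$ almost surely, $\Prob[F(\xhbopt)-F(\xbf^*)>\epsilon]\le\E[F(\xhbopt)-F(\xbf^*)]/\epsilon\le C(\sigma_p^2+\mathcal M^2)/(\mu N\epsilon)$, which is at most $\beta$ once the statistical part of the sample requirement is enlarged by the factor $1/\beta$, the $N\ge C_1\mathcal L/\mu$ portion being unchanged. The steps other than the third are routine convex-analysis bookkeeping and a Markov bound.
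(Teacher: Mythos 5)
Your overall architecture --- a replace-one (average-RO) stability argument in place of any covering-number control, a stability radius of order $(\mu N)^{-1}$ times a single-sample gradient deviation, and Markov's inequality to pass from \eqref{thm 2 subopt expected} to \eqref{thm 2 subopt} --- coincides with the paper's, and your first and last steps are correct. The crux step, however, has a genuine gap as described. You reduce the problem to
\begin{align}
\E\left[\langle \nabla F(\xhbopt)-\nabla f(\xhbopt,\xi_j),\,\xhbopt-\xbf^*\rangle\right]
=\E\left[\langle \nabla F(\xhbopt)-\nabla f(\xhbopt,\xi_j),\,\xhbopt-\xbf^*\rangle
-\langle \nabla F(\xhbj)-\nabla f(\xhbj,\xi_j),\,\xhbj-\xbf^*\rangle\right]\nonumber
\end{align}
and propose to control the right-hand side by the stability radius, ``feeding in the $\mathcal L$-smoothness and the $\mathcal M$-bound to transport gradients between $\xhbopt$ and $\xhbj$.'' The obstruction is that this difference necessarily contains the term $\nabla f(\xhbopt,\xi_j)-\nabla f(\xhbj,\xi_j)$: a change of the \emph{per-sample} gradient between two nearby points, paired moreover with the direction $\xhbopt-\xbf^*$ rather than the stability direction $\xhbopt-\xhbj$. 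Assumption \ref{L-smoothness} controls only the population gradient $\nabla F=\nabla F_1+\nabla F_2$; no smoothness of the individual $f(\cdot,\xi)$ is assumed, so $\Vert\nabla f(\xhbopt,\xi_j)-\nabla f(\xhbj,\xi_j)\Vert$ admits no bound in terms of $\Vert\xhbopt-\xhbj\Vert_q$. Nor can Assumption \ref{assumption: Variance everywhere} be applied to $\nabla f(\xhbopt,\xi_j)-\nabla F(\xhbopt)$, because $\xhbopt$ is correlated with $\xi_j$; and monotonicity of $\nabla f(\cdot,\xi_j)$ only yields one-sided information in the direction $\xhbopt-\xhbj$, which is not the direction you need.

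The paper circumvents this by telescoping at the level of \emph{function values}: from $\E[F(\xhbopt)-F(\xbf^*)]\leq\E[F(\xhbopt)-F_N(\xhbopt)]$ and the symmetrization identity $\E[F(\xhbopt)-F_N(\xhbopt)]=\frac{1}{2N}\sum_j\E[f(\xhbopt,\xi_j')-f(\xhbj,\xi_j')]+\frac{1}{2N}\sum_j\E[f(\xhbj,\xi_j)-f(\xhbopt,\xi_j)]$, each difference is linearized by convexity around a base point that is \emph{independent} of the sample it is paired with ($\xhbopt$ with $\xi_j'$, and $\xhbj$ with $\xi_j$), and every resulting inner product is taken with the stability displacement $\xhbopt-\xhbj$. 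Only $\nabla F(\xhbopt)-\nabla F(\xhbj)$ ever needs to be transported, which is precisely what Assumption \ref{L-smoothness} covers, and the remaining centered gradients are variance-bounded by independence. If you want to rescue a gradient-based route you would need an analogous base-point swap (via convexity of each $f(\cdot,\xi)$) before centering. A further side remark: your preliminary bound $\E[\Vert\xhbopt-\xbf^*\Vert_q^2]\leq C\sigma_p^2/(\mu^2N)$ is correct but would not repair the estimate --- pairing the $\mathcal M$-term with $\Vert\xhbopt-\xbf^*\Vert_q$ instead of $\Vert\xhbopt-\xhbj\Vert_q$ produces a contribution of order $\mathcal M\sigma_p/(\mu\sqrt N)$, which forces $N\gtrsim\mathcal M^2\sigma_p^2/(\mu^2\epsilon^2)$ rather than the claimed $N\gtrsim(\sigma_p^2+\mathcal M^2)/(\mu\epsilon)$.
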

\begin{proof}{\color{black}\label{proof comment R1}An important component of this proof is to establish that, if one data point is changed to a different i.i.d.\ copy of $\xi$ in SAA, the output solution does not change much, on average, in terms of  the squared distance w.r.t. the $q$-norm.  This   is the manifestation of the innate ``average replace-one (average-RO) stability'' of SAA when it is applied to solving a strongly convex SP problem.  The concept of average-RO stability is introduced by \cite{shalev2010learnability}. To our knowledge, our proof may  be the first to use the average-RO stability to analyze the  non-asymptotic sample complexity of SAA beyond the uniform Lipschitz condition.} (See more discussions on average-RO stability in Remark \ref{RO stability}.) More specifically, we  prove \eqref{thm 2 subopt expected} through two steps below. 


{\bf Step 1.} Observe that Eq.\,\eqref{suboptimality in computing SAA} (as an implication of the definition of $\xhb$ as in \eqref{define solution}) implies that
\begin{align}
\E\left[F(\xhb)-F(\xbf^*)\right]=&\,\E\left[F(\xhb)-F_N(\xbf^*)\right] \leq  \E\left[F(\xhb)-F_N(\xhb)+\delta\Vert \xhb-\xbf^*\Vert_q\right]\nonumber
\\\leq&\, \frac{\delta^2}{\mu}+\frac{\mu}{4}\E\left[\Vert \xhb-\xbf^*\Vert_q^2\right]+\E\left[F(\xhb)-F_N(\xhb)\right].\label{test SP new result 1} 
\end{align} 
 Invoking Assumption \ref{SC condition constant all}, we have that $F$ is also $\mu$-strongly convex w.r.t. the $q$-norm. Consequently,
\begin{align}
&\E\left[\frac{\mu}{2}\Vert \xhb-\xbf^*\Vert_q^2\right] \leq   \frac{\delta^2}{\mu}+\frac{\mu}{4}\E\left[\Vert \xhb-\xbf^*\Vert_q^2\right]+\E\left[F(\xhb)-F_N(\xhb)\right]\nonumber
  \Longrightarrow   \E\left[\frac{\mu}{4}\Vert \xhb-\xbf^*\Vert_q^2\right]\leq \frac{\delta^2}{\mu}+\E\left[F(\xhb)-F_N(\xhb)\right]. \nonumber
\end{align}
This combined with \eqref{test SP new result 1} implies that
\begin{align}
\E\left[F(\xhb)-F(\xbf^*)\right]\leq \frac{2\delta^2}{\mu}+2\E\left[F(\xhb)-F_N(\xhb)\right].\label{test SP new result 1 updated}
\end{align}
Therefore, it suffices to establish an upper bound on $\E\left[F(\xhb)-F_N(\xhb)\right]$, which is the focus of Step 2 in this proof.

{\bf Step 2.} Below, we let $H(\xbf):=\E[G(\xbf,\xi)]$ for all $\xbf\in\X$. By Assumption \ref{SC condition constant all}, for any given $\xbf,\ybf\in\X$, it holds that  $f(\xbf,\xi)-f(\ybf,\xi)\geq \langle G(\ybf,\xi),\xbf-\ybf\rangle$ for almost every $\xi\in\Xi$. Thus, under Assumption \ref{assumption: Variance everywhere}, $F(\xbf)-F(\ybf)=\E[f(\xbf,\xi)]-\E[f(\ybf,\xi)]\geq \E[\langle G(\ybf,\xi),\xbf-\ybf\rangle]=\langle \E[G(\ybf,\xi)],\xbf-\ybf\rangle=\langle H(\ybf),\xbf-\ybf\rangle$ for any $\xbf,\ybf\in\X$. Consequently, $H(\xbf)\in \partial F(\xbf)$.

With the observation from Step 1, we construct a sequence of alternative formulations of SAA \eqref{Eq: SAA} with $F_{N}^{(j)}(\xbf):=\frac{1}{N}\left(f( \xbf,\xi_{j}')+\sum_{\iota\neq j} f(\xbf,\xi_{\iota})\right)$, where $\xi_j'$ is an i.i.d. copy of $\xi$, for all $j=1,...,N$. Denote that $\boldsymbol\xi^{(j)}_{1,N}=(\xi_1, ...,\xi_{j-1}, \xi_j', \xi_{j+1}, ..., \xi_N)$. Correspondingly, we let $\xhbj:=\widetilde{\xbf}(\boldsymbol\xi^{(j)}_{1,N})$ following the notation  in \eqref{define solution}. 
Below, we establish an overestimate on $N^{-1}\sum_{j=1}^N\E\left[\Vert\xhbj-\xhb\Vert_q^2\right]$.  This overestimate  is to play a key role in bounding  $\E\left[F(\xhb)-F_N(\xhb)\right].$
To that end, we first observe that 
\begin{align}
 F_N(\xhbj)-F_N(\xhb) 
=&\frac{f(\xhbj,\xi_{j})-f(\xhb,\xi_j)}{N}{+}\sum_{\iota\neq j}\frac{f(\xhbj,\xi_{\iota})-f(\xhb,\xi_\iota)}{N}\label{first row}
\\=&\frac{f(\xhbj,\xi_{j})-f(\xhb,\xi_j)}{N} -\frac{f(\xhbj,\xi_{j}')-f(\xhb,\xi_j')}{N}+F_N^{(j)}(\xhbj)-F_N^{(j)}(\xhb)\label{second row}
\\\leq &\frac{f(\xhbj,\xi_{j})-f(\xhb,\xi_j)}{N}-\frac{f(\xhbj,\xi_{j}')-f(\xhb,\xi_j')}{N}+\delta\cdot \Vert \xhbj-\xhb \Vert_{q}\label{third row}
\end{align}
Here, \eqref{first row} and \eqref{second row} are  by the definitions of $F_N$ and $F_N^{(j)}$, and \eqref{third row} is due to the construction of $\xhbj:=\widetilde{\xbf}(\boldsymbol\xi^{(j)}_{1,N})$ (cf. \eqref{suboptimality in computing SAA}).  
 By Assumption \ref{SC condition constant all} and   $G(\xbf,\xi)\in\partial f(\xbf,\xi)$, we have  $f(\xhbj,\xi_{j})-f(\xhb,\xi_j)\leq \langle G(\xhbj,\xi_{j}),\,\xhbj-\xhb\rangle$ as well as $f(\xhb,\xi_j')-f(\xhbj,\xi_{j}')\leq \langle G(\xhb,\xi_{j}'),\,\xhb-\xhbj\rangle$, almost surely.   Combining this with \eqref{third row} leads to the below:
\begin{align}
F_N(\xhbj)-F_N(\xhb)\nonumber
\leq & \frac{1}{N}\cdot \left\langle G(\xhbj,\xi_j)-G(\xhb,\xi_j'),\, \xhbj-\xhb\right\rangle
  +\delta\cdot \Vert \xhbj-\xhb \Vert_{q},~~a.s.\nonumber
\\  = & \frac{1}{N}\cdot \left\langle G(\xhbj,\xi_j)-H(\xhbj),\, \xhbj-\xhb\right\rangle
 +\frac{1}{N}\cdot \left\langle  G(\xhb,\xi_j')-H(\xhb),\, \xhb-\xhbj\right\rangle\nonumber
 \\&+\frac{1}{N}\cdot \left\langle  H(\xhbj)-H(\xhb),\, \xhbj-\xhb\right\rangle+\delta\cdot \Vert \xhbj-\xhb \Vert_{q}.\label{TBC here}
\end{align}
Since   Assumption \ref{L-smoothness} leads to  
\begin{align}
&\,\left\langle  H(\xhbj)-H(\xhb),\, \xhbj-\xhb\right\rangle\nonumber
\\=&\,\left\langle  \nabla F_1(\xhbj)-\nabla F_1(\xhb),\, \xhbj-\xhb\right\rangle+\left\langle  [H(\xhbj)-\nabla F_1(\xhbj)]-[H(\xhb)-\nabla F_1(\xhb)],\, \xhbj-\xhb\right\rangle\nonumber
\\\leq&\, \mathcal L\Vert \xhb-\xhbj\Vert_q^2+2\mathcal M \Vert \xhb-\xhbj\Vert_q,\label{useful inequality to use soon}
\end{align}
we may continue from the above to obtain, through H\"older's and Young's inequalities, for all $\alpha>0$, 
\begin{align}
\color{black}
F_N(\xhbj)-F_N(\xhb)
 \leq &\, \color{black}\frac{1}{2\alpha \mu N^2}\cdot\left( \Big\Vert G(\xhbj,\xi_j)-H(\xhbj)\Big\Vert^2_p+  \Big\Vert G(\xhb,\xi_j')-H(\xhb)\Big\Vert^2_p \right) \nonumber
\\\color{black}  & \color{black}+ \left[\frac{\mathcal L}{N}+\left(\alpha+\frac{1}{8}\right)\mu\right]\cdot \Vert\xhbj-\xhb\Vert_q^2+\frac{16\mathcal M^2}{\mu N^2}+\frac{4\delta^2}{\mu},~~~a.s.\label{vital ahh}
\end{align}
By strong convexity of $F_N$ (as an immediate result of Assumption \ref{SC condition constant all})  and the definition of $\xhb$ in \eqref{define solution}, we have that
\begin{align}
F_N(\xhbj)-F_N(\xhb)\geq & -\delta\cdot\Vert\xhbj-\xhb \Vert_q+\frac{\mu}{2}\cdot \Vert\xhbj-\xhb\Vert^2_q 
\geq  -\frac{\delta^2}{\mu}+\frac{\mu}{4}\Vert\xhbj-\xhb\Vert^2_q,~~~a.s.\label{strong convexity results FN otherwise}
\end{align}
where the last inequality uses $\delta \Vert\xhbj-\xhb \Vert_q\leq \mu^{-1}\delta^2+0.25 \mu\Vert\xhbj-\xhb \Vert_q^2$.
Combining \eqref{vital ahh} and \eqref{strong convexity results FN otherwise}, we immediately obtain the below after some re-organization and simplification:
$$\left[\left(\frac{1}{8}-\alpha\right)  \mu-\frac{\mathcal L}{N}\right]\cdot \Vert\xhbj-\xhb\Vert_q^2\leq  
\frac{1}{2N^2\mu\alpha} \left\Vert G(\xhbj,\xi_j)-H(\xhbj)\right\Vert^2_p
+\frac{1}{2N^2\mu\alpha} \left\Vert G(\xhb,\xi_j')-H(\xhb)\right\Vert^2_p+\frac{16\mathcal M^2}{\mu N^2}+\frac{5\delta^2}{\mu},~~ a.s.$$
Note that $\xhbj$ and $\xi_j$ are independent, so are $\xhb$ and $\xi'_j$. We therefore have  $\E[\Vert G(\xhbj,\xi_j)-H(\xhbj)\Vert^2_p]=\E\big[\E[\Vert G(\xhbj,\xi_j)-H(\xhbj)\Vert^2_p\,\big\vert\, \xhbj]\big]\leq \sigma^2_p$ and $\E [\Vert G(\xhb,\xi_j')-H(\xhb)\Vert^2_p]=\E \big[\E\left[\left.\Vert G(\xhb,\xi_j')-H(\xhb)\Vert^2_p\right\vert \xhb\right]\big]\leq \sigma^2_p$ under Assumption \ref{assumption: Variance everywhere}. Further because     we may let  $\alpha=1/16$ and it is assumed that $N\geq \frac{C'\mathcal L}{\mu}$ (where  we may as well let $C'\geq 32$),  we then have $\left[\left(\frac{1}{8}-\alpha\right)  \mu-\frac{\mathcal L}{N}\right]^{-1}\leq \frac{32}{\mu}$ and, therefore,
\begin{align}
& \color{black}\frac{1}{N}\sum_{j=1}^N\E\left[\Vert\xhbj-\xhb\Vert_q^2\right]\leq  \frac{1}{N}\sum_{j=1}^N \left[\left(\frac{1}{8}-\alpha\right)  \mu-\frac{\mathcal L}{N}\right]^{-1} \left( 
\frac{\sigma_p^2}{N^2\mu\alpha} +\frac{16\mathcal M^2}{\mu N^2}+\frac{5\delta^2}{\mu}\right)\leq   
512\cdot \frac{\sigma_p^2+\mathcal M^2}{N^2\mu^2}+\frac{160\delta^2}{\mu^2}.\label{important bound}
\end{align}

Because $f(\xhb,\xi_j')$ and $f(\xhbj,\xi_j)$ are identically distributed --- so are $f(\xhb,\xi_j)$ and $f(\xhbj,\xi_j')$ ---  we then obtain that $\E[f(\xhb,\xi'_j)]=\E[f(\xhbj,\xi_j)]$ and that $\E[f(\xhb,\xi_j)]=\E[f(\xhbj,\xi_j')]$. Therefore,
\begin{align}
&\E[F(\xhb)-F_N(\xhb)]\nonumber
\\=&\E\left[\frac{1}{N}\sum_{j=1}^N \left[F(\xhb)-f(\xhb,\xi_j)\right]
\right]=\E\left[\frac{1}{N}\sum_{j=1}^N \left[f(\xhb,\xi_j')-f(\xhb,\xi_j)\right]
\right]\nonumber
\\=&\frac{1}{2N}\sum_{j=1}^N \E \left[f(\xhb,\xi_j')-f(\xhb^{(j)},\xi_j')\right]+\frac{1}{2N}\sum_{j=1}^N \E \left[f(\xhb^{(j)},\xi_j)-f(\xhb,\xi_j)\right]\nonumber
\\\leq &\textcolor{black}{\frac{1}{2N}\sum_{j=1}^N \E \left[\left\langle G(\xhb,\xi_j')-G(\xhb^{(j)},\xi_j),\,\xhb-\xhb^{(j)}\right\rangle\right]}
\label{to derive here now}
\\=  &\textcolor{black}{\frac{1}{2N}\sum_{j=1}^N \E \left[\left\langle G(\xhb,\xi_j')- H(\xhb),\,\xhb-\xhb^{(j)}\right\rangle\right]+\frac{1}{2N}\sum_{j=1}^N \E \left[\left\langle G(\xhb^{(j)},\xi_j)-H(\xhbj),\,\xhb^{(j)}-\xhb\right\rangle\right]}\nonumber
\\& 
 \textcolor{black}{+\frac{1}{2N}\sum_{j=1}^N \E \left[\left\langle H(\xhb)-H(\xhbj),\, \xhb-\xhbj\right\rangle\right]}\nonumber
\\{\leq}&\frac{1}{2N}\sum_{j=1}^N \E \left[\frac{8}{N\mu}\left\Vert G(\xhb,\xi_j')- H(\xhb)\right\Vert_p^2+\frac{8}{N\mu}\left\Vert H(\xhbj)-G(\xhbj,\xi_j)\right\Vert_p^2 
\right.\nonumber
\\&\left.+\left(\mathcal L+\frac{N\mu}{16}\right)\Vert\xhb-\xhb^{(j)}\Vert_q^2+ {2\mathcal M \Vert \xhb-\xhbj\Vert_q}\right] \label{to derive here now 23}
\\  \leq &\frac{1}{2N}\sum_{j=1}^N \E \left[\frac{16}{N\mu}\sigma_p^2+\left(\mathcal L+\frac{17}{16}N\mu\right)\Vert\xhb-\xhb^{(j)}\Vert_q^2+  \frac{\mathcal M^2}{N\mu}\right]\label{reducing variance}
\leq   C''\cdot \Bigl(\frac{\color{black}\sigma_p^2+\mathcal M^2}{ N\mu}+  \frac{N\delta^2}{\mu}\Bigr), 
\end{align}
for some universal constant $C''>0$.
Here, \eqref{to derive here now}  is based on the  convexity of  $f(\cdot,\xi)$, \textcolor{black}{\eqref{to derive here now 23} is due to Assumption \ref{L-smoothness} (which leads to \eqref{useful inequality to use soon}) as well as Young's inequality},  the first inequality in  \eqref{reducing variance} is the result of invoking both Assumption \ref{assumption: Variance everywhere} and Young's inequality,  and the last inequality in \eqref{reducing variance} 
is obtained by invoking  \eqref{important bound} and the assumption that $N\geq \frac{C'\mathcal L}{\mu}$.

Recall our stipulation of $N\geq \frac{C'\mathcal L}{\mu}$. Combining this with  \eqref{reducing variance}  and  \eqref{test SP new result 1 updated} leads to $\E[F(\xhb)-F(\xbf^*)]\leq 2C''\cdot \frac{\color{black}\sigma_p^2+\mathcal M^2}{ N\mu}+\frac{(C''\cdot N+2)\delta^2}{\mu}$, which implies the desired result  in  \eqref{thm 2 subopt expected} after some simple re-organization in view of $\delta\leq \frac{1}{N}$.  Then \eqref{thm 2 subopt} is   an immediate result by Markov's inequality.  
\end{proof}
\smallskip

%

\smallskip

  \begin{remark}\label{remark: Comparison with SMD strongly convex}
 The theorem above confirms the promised sample complexity in \eqref{summary first result}  for SAA  \eqref{Eq: SAA} when it is applied to the strongly convex SP problems. In comparison with the existing results  on  SMD, e.g., as discussed by \cite{lan2020first} and \cite{juditsky2011solving},  the rates in \eqref{thm 2 subopt expected} and \eqref{thm 2 subopt} are identical to the best known rates for the canonical SMD algorithms in terms of the sample size requirement $N$ to achieve the same   solution accuracy.  \textcolor{black}{(See Table \ref{table summary results compare with SMD} for a summary of comparisons with SMD).} Nonetheless,  it is worth noting the presence of  an accelerated   SMD variation, called AC-SA \citep{lan2012optimal,ghadimi2012optimal,lan2020first},  which provably achieves a better rate with $\mathcal L$ for some careful choice of hyperparameters. 
 \end{remark}
 
 {\color{black}
 \begin{remark}\label{rk: variance}
The variance term $\sigma_p^2$     depends on dimensionality $d$ in general. This dependence can be further explicated under some additional assumptions. For instance, suppose that, for some $p\geq 2$ and $\phi_p\geq 0$, the component-wise $p$th  central moment of $G(\xbf,\xi)$ is bounded by $\phi_p^p$ everywhere; namely, for     $\phi_p\geq 0$,  it holds that $\left\Vert G_i(\xbf,\xi)-\E[G_i(\xbf,\xi)]\right\Vert_{L^p}\leq \phi_p$ for all $\xbf\in\X$ and every $i=1,...,d$, where $G_i(\xbf,\xi)$ is  the $i$th entry of $G(\xbf,\xi)$. Because, for $p\geq 2$, the function $(\cdot)^{2/p}$ is concave in `$\cdot$', one may easily see  the following:
\begin{align}
&
\E\bigl[\|G(\xbf,\xi)-\E G(\xbf,\xi)\|_p^2\bigr]\leq (\E\bigl[\|G(\xbf,\xi)-\E G(\xbf,\xi)\|_p^{p}\bigr])^{2/p}=\left(\sum_{i=1}^d\E\bigl[\|G_i(\xbf,\xi)-\E G_i(\xbf,\xi)\|_p^{p}\bigr]\right)^{2/p}\le d^{2/p}\phi_p^2.\label{Eq: bounding dependence on d}
\end{align}
By this observation, in this case, we have $\sigma_p^2\leq d^{2/p}\cdot \phi_{p}^2$ in our bounds, whose dependence on dimensionality reduces when $p$ increases. Particularly, when it is admissible to let $p\geq c\ln d$ for some constant $c>0$, the quantity $\sigma_p^2$ becomes dimension-free.
\end{remark}
}

{\color{black}
\begin{remark}\label{absence of problem parameters}
The SAA problem in \eqref{Eq: SAA} does not involve problem-dependent quantities such as $\sigma_p$, $\mathcal M$, or $\mathcal L$. Consequently, there is no need to estimate these parameters. In contrast, SMD methods \citep[e.g., by][]{nemirovski2009robust,ghadimi2013stochastic} often require knowledge of $\sigma_p$, $\mathcal M$, or $\mathcal L$ to properly set hyperparameters such as   step size. Thus, the absence of these quantities in the SAA problem can sometimes lead to  convenience in implementation.
\end{remark}
}


\smallskip

Our next theorem is focused on  convex SP. Before its statement, we first introduce some choice of hyperparameters for the Tikhonov-like penalty in \eqref{Eq: SAA-ell2}  given some $q> 1$  and a user-specified accuracy threshold $\epsilon>0$: 
\begin{align}
q'\in(1,2]:\,q'\leq q;~~~~R^*\geq \max\{1,\,V_{q'}(\xbf^*)\}; ~~~~\text{and}~~~~\lambda_0=\frac{\epsilon}{2R^*}.\label{parameter setting}
\end{align}
\begin{theorem}\label{thm: second main theorem convex optimal rate}
Let  $q> 1$. Suppose that the hyperparameters $q'$, $R^*$, and $\lambda_0$ are specified as in \eqref{parameter setting}. Assume that (i) Assumption \ref{L-smoothness} w.r.t.  the $q$-norm, (ii) Assumption \ref{assumption: Variance everywhere} w.r.t.\ the $p$-norm for some $p:\,1\leq p\leq \frac{q}{q-1}$, and (iii) Assumption \ref{GC condition constant all} hold.   Any $(\delta,q)$-approximate solution $\xhb$ to SAA \eqref{Eq: SAA-ell2}  with $\delta\leq    1/N$  satisfies the following   inequalities:  For any given $\epsilon\in(0,1]$,
\begin{align}
\E\left[F(\xhb)-F(\xbf^*)\right]\leq  \epsilon,
~~~~\text{if }
N\geq \frac{C_2 R^*}{q'-1}\cdot \max\left\{\frac{\mathcal L}{\epsilon},~\frac{\sigma_p^2+\mathcal M^2}{\epsilon^2} \right\}; \label{thm 2 subopt RSAA optimal GC}
\end{align}
and, meanwhile,  for any given $\epsilon\in(0,1]$ and $\beta\in(0,1)$,
\begin{align}
\Prob\Big[F(\xhb)-F(\xbf^*)\leq \epsilon\Big]\geq1-\beta,~~~~\text{if }
N\geq \frac{C_2R^*}{q'-1}\cdot \max\left\{\frac{ \mathcal L}{\epsilon},~\frac{\sigma_p^2+\mathcal M^2}{\beta\epsilon^2}\right\}.\label{thm 2 subopt RSAA optimal GC probability}
\end{align}
Here, $C_2>0$ is some universal constant.
\end{theorem}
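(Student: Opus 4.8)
\bigskip

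\noindent\textbf{Proof plan.} The idea is to reduce the convex SP to a strongly convex one, with the Tikhonov-like penalty $\lambda_0 V_{q'}$ in \eqref{Eq: SAA-ell2} playing the role of an artificial strong-convexity term, and then to invoke Theorem \ref{thm: suboptimality}. Set $\bar F:=F+\lambda_0 V_{q'}$ and let $\bar\xbf\in\arg\min_{\xbf\in\X}\bar F$ (a minimizer exists by coercivity of $V_{q'}$); observe that $\xhbopt$ is an exact minimizer of $F_{\lambda_0,N}=F_N+\lambda_0 V_{q'}$, i.e.\ an exact SAA solution for the strongly convex population objective $\bar F$. Two elementary observations organize the argument. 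First, by optimality of $\bar\xbf$ and $V_{q'}\ge0$, $F(\bar\xbf)-F(\xbf^*)\le\lambda_0(V_{q'}(\xbf^*)-V_{q'}(\bar\xbf))\le\lambda_0 R^*=\epsilon/2$, so the regularization bias is at most $\epsilon/2$. Second, since $f(\cdot,\xi)$ is convex by Assumption \ref{GC condition constant all} and $\tfrac12\|\cdot-\xbf^0\|_{q'}^2$ is $(q'-1)$-strongly convex w.r.t.\ $\|\cdot\|_{q'}$ for $q'\in(1,2]$ (an $\ell_{q'}$-geometry fact recorded among the preliminaries on $V_{q'}$ in Appendix \ref{sec: preliminary}), the perturbed sample function $f(\cdot,\xi)+\lambda_0 V_{q'}$ satisfies Assumption \ref{SC condition constant all} w.r.t.\ $\|\cdot\|_{q'}$ with modulus $\mu':=\lambda_0(q'-1)=\epsilon(q'-1)/(2R^*)$.

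Next I would verify the remaining hypotheses of Theorem \ref{thm: suboptimality} for $\bar F$, with the $q'$-norm in the role of its ``$q$''. Because $q'\le q$, monotonicity of the $\ell_r$-norm in $r$ converts Assumption \ref{L-smoothness} w.r.t.\ the $q$-norm into Assumption \ref{L-smoothness} w.r.t.\ the $q'$-norm with the same $\mathcal L$ and $\mathcal M$ (using $\|v\|_{q'}\ge\|v\|_q$ and $\|w\|_{q'/(q'-1)}\le\|w\|_{q/(q-1)}$), and it makes Assumption \ref{assumption: Variance everywhere} w.r.t.\ the $p$-norm admissible there since $p\le q/(q-1)\le q'/(q'-1)$; the variance bound is moreover literally unchanged, the deterministic penalty cancelling in $\nabla f-\nabla F$. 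The only non-routine point is that $\bar F$ must retain the composite smooth-plus-Lipschitz structure of Assumption \ref{L-smoothness}. This can be arranged in two ways: (a) keep $\bar F_1=F_1$ and absorb the penalty into the Lipschitz part, $\bar F_2=F_2+\lambda_0 V_{q'}$, after first deducing from $F_N(\xhbopt)+\lambda_0 V_{q'}(\xhbopt)\le F_N(\xbf^*)+\lambda_0 V_{q'}(\xbf^*)$ and convexity of $F_N$ an a priori bound $V_{q'}(\xhbopt)=O(R^*)$ (likewise for $\bar\xbf$, after the harmless normalization $F(\xbf^*)=0$), on which sublevel set $\|\nabla(\lambda_0 V_{q'})\|_{q'/(q'-1)}=\lambda_0\|\cdot-\xbf^0\|_{q'}=O(\lambda_0\sqrt{R^*})=O(\epsilon/\sqrt{R^*})=O(1)$ by $\lambda_0=\epsilon/(2R^*)$, $R^*\ge1$, $\epsilon\le1$, so the effective Lipschitz constant $\mathcal M'$ obeys $\mathcal M'^2\le2\mathcal M^2+O(1)=O(\sigma_p^2+\mathcal M^2)$ since $\sigma_p\ge1$; or (b) more cleanly, rerun the proof of Theorem \ref{thm: suboptimality} with the penalty carried along, noting that the empirical remainder $F_{\lambda_0,N}-\bar F=F_N-F$ is penalty-free, so the $F_1+F_2$ structure of the \emph{un}penalized $F$ alone suffices to control it and the penalty enters only through the modulus $\mu'$.

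Either way, applying Theorem \ref{thm: suboptimality} (w.r.t.\ the $q'$-norm) to $\bar F$ with target accuracy $\epsilon/2$ and modulus $\mu'$ gives $\E[\bar F(\xhbopt)-\bar F(\bar\xbf)]\le\epsilon/2$ as soon as $N\ge C_1\max\{\mathcal L/\mu',(\sigma_p^2+\mathcal M'^2)/(\mu'\epsilon/2)\}$, and substituting $\mu'=\epsilon(q'-1)/(2R^*)$ together with $\mathcal M'^2=O(\sigma_p^2+\mathcal M^2)$ shows this is implied by $N\ge\frac{C_2R^*}{q'-1}\max\{\mathcal L/\epsilon,(\sigma_p^2+\mathcal M^2)/\epsilon^2\}$ for a suitable universal $C_2$. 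Since $\bar F(\bar\xbf)=F(\bar\xbf)+\lambda_0 V_{q'}(\bar\xbf)\le F(\xbf^*)+\epsilon/2$ holds deterministically and $F(\xhbopt)\le\bar F(\xhbopt)$, we get $F(\xhbopt)-F(\xbf^*)\le[\bar F(\xhbopt)-\bar F(\bar\xbf)]+\epsilon/2$, and taking expectations yields \eqref{thm 2 subopt RSAA optimal GC}. The high-probability statement \eqref{thm 2 subopt RSAA optimal GC probability} follows verbatim from the high-probability part \eqref{thm 2 subopt} of Theorem \ref{thm: suboptimality}, which simply carries the extra $1/\beta$ in front of the variance term.

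\noindent\textbf{Main obstacle.} I expect the delicate step to be the treatment of the penalty inside the smooth/Lipschitz framework: for $q'<2$ the map $\tfrac12\|\cdot\|_{q'}^2$ is not smooth with a dimension-free constant, so it cannot be merged into the $\mathcal L$-smooth component, and it is not globally Lipschitz when $\X$ is unbounded either; resolving this forces either an a priori localization of $\xhbopt$ and $\bar\xbf$ to a sublevel set of $V_{q'}$ of controlled radius, or a from-scratch replay of the Theorem \ref{thm: suboptimality} argument in which the penalty contributes only strong convexity. A secondary, pervasive concern is constant-tracking to ensure $C_2$ is genuinely universal (independent of $d,q,q',\epsilon,\beta$), which is exactly where the $\ell_r$-norm monotonicity reductions and the normalization $\sigma_p\ge1$ are used.
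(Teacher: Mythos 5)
Your overall strategy --- treat \eqref{Eq: SAA-ell2} as SAA for the $\lambda_0(q'-1)$-strongly-convex surrogate $F_{\lambda_0}=F+\lambda_0 V_{q'}$, pay a regularization bias $\lambda_0 V_{q'}(\xbf^*)\le\epsilon/2$, and recycle the Theorem \ref{thm: suboptimality} machinery --- is exactly the paper's, and your ``route (b)'' (replay that proof with the penalty carried along) is what the paper actually does. You have also correctly located the obstacle: $\lambda_0 V_{q'}$ fits neither the $\mathcal L$-smooth nor the $\mathcal M$-Lipschitz slot of Assumption \ref{L-smoothness}. Where the proposal has a genuine gap is in the resolution. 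In route (a), the claimed a priori bound $V_{q'}(\xhbopt)=O(R^*)$ does not follow from $F_{\lambda_0,N}(\xhbopt)\le F_{\lambda_0,N}(\xbf^*)$ together with convexity of $F_N$: that inequality only gives $\lambda_0 V_{q'}(\xhbopt)\le \lambda_0 V_{q'}(\xbf^*)+F_N(\xbf^*)-F_N(\xhbopt)$, and the empirical excess $F_N(\xbf^*)-F_N(\xhbopt)$ is a random quantity with no deterministic bound of order $\epsilon$ (on unbounded $\X$ it can be arbitrarily large), so neither $\xhbopt$ nor the replicas $\xhbj$ can be confined almost surely to a sublevel set of $V_{q'}$; the black-box invocation of Theorem \ref{thm: suboptimality} with $\bar F_2=F_2+\lambda_0V_{q'}$ is therefore not available. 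In route (b), the assertion that ``the penalty enters only through the modulus $\mu'$'' is also too optimistic: the replayed argument produces the cross term $\lambda_0\langle\nabla V_{q'}(\xhbj)-\nabla V_{q'}(\xhbopt),\,\xhbj-\xhbopt\rangle$, which is nonnegative and must be upper-bounded rather than discarded, and via $\Vert\nabla V_{q'}(\xbf)\Vert_{q'/(q'-1)}=\Vert\xbf-\xbf^0\Vert_{q'}$ this again requires control of $\E[V_{q'}(\xhbopt)]$.

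The missing ingredient is the paper's self-bounding (bootstrap) step. One first derives the generalization bound
\begin{equation*}
\E[F_{\lambda_0}(\xhbopt)-F_{\lambda_0,N}(\xhbopt)]\;\le\; \hat C'\,\frac{\sigma_p^2+\mathcal M^2}{(q'-1)\lambda_0 N}\;+\;2\hat C'\,\frac{\lambda_0}{N(q'-1)}\,\E[V_{q'}(\xhbopt)],
\end{equation*}
with $\E[V_{q'}(\xhbopt)]$ still unknown on the right-hand side; combining this with $\E[F_{\lambda_0,N}(\xhbopt)]\le F_{\lambda_0}(\xbf^*)$ and $F(\xhbopt)\ge F(\xbf^*)$ yields an inequality in which $\E[\lambda_0 V_{q'}(\xhbopt)]$ appears on both sides, and for $N\ge C_2/(q'-1)$ the coefficient on the right is small enough to absorb, giving $\E[\lambda_0 V_{q'}(\xhbopt)]\le \frac{5}{4}\bigl(\epsilon/2+\hat C'\frac{\sigma_p^2+\mathcal M^2}{(q'-1)\lambda_0 N}\bigr)$, after which the gap bound closes and the stated rate follows. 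Supplying this bootstrap (or an equivalent in-expectation control of $V_{q'}(\xhbopt)$) is what your plan still needs before either route goes through; the remaining reductions in your write-up (norm monotonicity for $q'\le q$, the bias accounting, and Markov for the high-probability version) are correct.
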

\begin{proof} The proof below follows that of Theorem \ref{thm: suboptimality} with some important modifications. First,  SAA \eqref{Eq: SAA-ell2} can be {considered as} SAA \eqref{Eq: SAA} to the following new SP problem: 
$
\underset{\xbf\in\X}{\min}\, F_{\lambda_0}(\xbf):=F(\xbf)+\lambda_0V_{q'}(\xbf).$ Hereafter within this proof, we let $\xbf^*_{\lambda_0}\in\arg\min_{\xbf\in\X}F_{\lambda_0}(\xbf)$, denote that $H(\xbf):=\E[G(\xbf,\xi)]$  
 and $H_{\lambda_0}(\xbf):=H(\xbf)+\lambda_0\nabla V_{q'}(\xbf)$ for all $\xbf\in\X$.
 We repeat a similar argument as in \eqref{test SP new result 1}-\eqref{test SP new result 1 updated} below:
 Observe that Eq.\,\eqref{suboptimality in computing SAA} for the case of SAA \eqref{Eq: SAA-ell2} (as an implication of the definition of $\xhb$ as in \eqref{define solution}) leads to
\begin{align}
\E\left[F_{\lambda_0}(\xhb)-F_{\lambda_0}(\xbf^*_{\lambda_0})\right]=&\,\E\left[F_{\lambda_0}(\xhb)-F_{{\lambda_0},N}(\xbf^*_{\lambda_0})\right] \leq  \E\left[F_{\lambda_0}(\xhb)-F_{{\lambda_0},N}(\xhb)+\delta\Vert \xhb-\xbf^*_{\lambda_0}\Vert_q\right]\nonumber
\\\leq&\, \frac{2\delta^2}{\lambda_0\cdot (q'-1)}+\frac{\lambda_0\cdot (q'-1)}{8}\E\left[\Vert \xhb-\xbf^*_{\lambda_0}\Vert_q^2\right]+\E\left[F_{\lambda_0}(\xhb)-F_{{\lambda_0},N}(\xhb)\right].\label{test SP new result 1 v2}
\end{align}
 Invoking Assumption \ref{GC condition constant all} and the fact that $V_{q'}$ is $(q'-1)$-strongly convex w.r.t. the $q'$-norm \citep{ben2001ordered} in the sense of \eqref{SC V}, since $q'\in(1,2]$, we have that $F_{\lambda_0}$ is also $[(q'-1)\cdot \lambda_0]$-strongly convex w.r.t. the $q'$-norm. This combined with \eqref{test SP new result 1 v2} and   $q'\leq q$ implies
\begin{align}
{\color{black}  \E\left[\Vert \xhb-\xbf^*_{\lambda_0}\Vert_{q'}^2\right]\leq \frac{16\delta^2}{3(q'-1)^2\cdot \lambda_0^2}+\frac{8}{3(q'-1)\cdot\lambda_0}\E\left[F_{\lambda_0}(\xhb)-F_{\lambda_0,N}(\xhb)\right].} \label{to plug in}
  \end{align}
 Recalling the  construction of $F_{\lambda_0}$, the fact that $V_{q'}(\cdot)\geq 0$, and the choice of $\lambda_0$, we then have
 \begin{align}
 \E\left[F (\xhb)-F (\xbf^*)-\frac{\epsilon}{2}\right]\leq &\E\left[F (\xhb)-F (\xbf^*)-\lambda_0V_{q'}(\xbf^*)\right]\leq \E\left[F_{\lambda_0} (\xhb)-F_{\lambda_0} (\xbf^*)\right]\nonumber
 \\\leq& \E\left[F_{\lambda_0} (\xhb)-F_{\lambda_0} (\xbf^*_{\lambda_0})\right]\leq \frac{4}{3}\E\left[F_{\lambda_0}(\xhb)-F_{\lambda_0,N}(\xhb)\right]+\frac{8\delta^2}{3\lambda_0(q'-1)},\label{start of all GC new}
 \end{align}
 where the last inequality in \eqref{start of all GC new} is obtained by combining \eqref{to plug in}  and \eqref{test SP new result 1 v2}. By the observation of \eqref{start of all GC new}, one can see that, in order to   control  {$\E[F(\xhb)-F(\xbf^*)]$} it suffices to derive an upper bound on $\E\left[F_{\lambda_0}(\xhb)-F_{\lambda_0,N}(\xhb)\right]$.  The rest of the proof follows this observation, which further leads to the desired results. 
 
 Let   $\boldsymbol\xi^{(j)}_{1,N}=(\xi_1,...,\xi_{j-1},\xi_j',\xi_{j+1},...,\xi_N)$ with $\xi_j'$ being an i.i.d. copy of $\xi$, define $f_{\lambda_0}(\cdot,\xi):=f(\cdot,\xi)+\lambda_0 V_{q'}(\cdot)$, let $G_{\lambda_0}(\cdot,\xi):=G(\cdot,\xi)+\lambda_0\nabla V_{q'}(\cdot)$, and denote that $\xhbj:=\widetilde{\xbf}(\boldsymbol\xi^{(j)}_{1,N})$ (cf., the definition of $\widetilde{\xbf}$ in \eqref{define solution}).   Under Assumption \ref{GC condition constant all}, we can follow exactly the same argument for \eqref{TBC here} in the proof of Theorem \ref{thm: suboptimality} to obtain 
\begin{multline}
 F_{\lambda_0,N}(\xhbj)-F_{\lambda_0,N}(\xhb) 
 \leq  
\frac{1}{N}\cdot \left\langle G_{\lambda_0}(\xhbj,\xi_j)-H_{\lambda_0}(\xhbj),\, \xhbj-\xhb\right\rangle
 \\+\frac{1}{N}\cdot \left\langle  G_{\lambda_0}(\xhb,\xi_j')-H_{\lambda_0}(\xhb),\, \xhb-\xhbj\right\rangle +\frac{1}{N}\cdot \left\langle  H_{\lambda_0}(\xhbj)-H_{\lambda_0}(\xhb),\, \xhbj-\xhb\right\rangle+\delta\cdot \Vert \xhbj-\xhb \Vert_{q}.\label{to use soon afterwards} 
\end{multline}
Observe that 
\begin{align}
&\left\langle  H_{\lambda_0}(\xhbj)-H_{\lambda_0}(\xhb),\, \xhbj-\xhb\right\rangle\nonumber
\\=&\left\langle  \nabla F_1(\xhbj)-\nabla F_1(\xhb),\, \xhbj-\xhb\right\rangle\nonumber
+\left\langle  [H(\xhbj)-\nabla F_1(\xhbj)]-[H(\xhb)-\nabla F_1(\xhb)],\, \xhbj-\xhb\right\rangle\nonumber
\\&+\left\langle   \lambda_0 \nabla V_{q'}(\xhbj)- \lambda_0 \nabla V_{q'}(\xhb),\, \xhbj-\xhb\right\rangle\nonumber
\\\leq&   \mathcal L\left\Vert \xhbj-\xhb\right\Vert_q^2+2\mathcal M \Vert \xhb-\xhbj\Vert_q +\lambda_0\cdot (\Vert \xhbj -\xbf^0\Vert_{q'}+\Vert  \xhb-\xbf^0\Vert_{q'})\cdot\Vert\xhbj-\xhb\Vert_{q'},
\label{new useful inequality to ensure v1}
\end{align}
where \eqref{new useful inequality to ensure v1} is due to Assumption \ref{L-smoothness} and  a property of $V_{q'}(\cdot)=0.5\Vert\cdot-\xbf^0\Vert_{q'}^2$ (as shown in \eqref{property of Vq} of Appendix \ref{sec: preliminary}); that is, $\Vert \nabla V_{q'}(\cdot ) \Vert_{p'}=\Vert\cdot-\xbf^0\Vert_{q'}$ for $p'=q'/(q'-1)$.

Note that $\xhbj$ and $\xi_j$ are independent, so are $\xhb$ and $\xi'_j$.
Assumption  \ref{assumption: Variance everywhere} then implies that $\E\Big[\Big\Vert G_{\lambda_0}(\xhbj,\xi_j)-H_{\lambda_0}(\xhbj)\Big\Vert^2_p\Big]=\E\left[\E\left[\left.\left\Vert G_{\lambda_0}(\xhbj,\xi_j)-H_{\lambda_0}(\xhbj)\right\Vert^2_p\right\vert \xhbj\right]\right]\leq\sigma^2_p$ and  $\E\left[ \left\Vert G_{\lambda_0}(\xhb,\xi_j')-H_{\lambda_0}(\xhb)\right\Vert^2_p\right]\leq\sigma^2_p$.  Further noting that $q'\leq q$,  we may then continue from \eqref{to use soon afterwards}  above to obtain, for any $\alpha>0$:  
\begin{align}
&\E[F_{\lambda_0,N}(\xhbj)-F_{\lambda_0,N}(\xhb)]\nonumber
\\\leq & \E\left[\frac{1}{2\alpha (q'-1)\lambda_0 N^2}\cdot \left\Vert G_{\lambda_0}(\xhbj,\xi_j)-H_{\lambda_0}(\xhbj)\right\Vert^2_p+\frac{1}{2\alpha(q'-1)\lambda_0   N^2}\cdot \left\Vert G_{\lambda_0}(\xhb,\xi_j')-H_{\lambda_0}(\xhb)\right\Vert^2_p\right]\nonumber
\\ &+
  \left[\frac{\mathcal L}{N}+\left(\frac{1}{16}+2\alpha\right)  \lambda_0  (q'-1) \right]\E\Vert\xhbj-\xhb\Vert_q^2+\frac{16\mathcal M^2}{\lambda_0\cdot (q'-1)N^2}+\frac{\lambda_0\E\left[(\Vert \xhbj -\xbf^0\Vert_{q'}+\Vert  \xhb-\xbf^0\Vert_{q'})^2\right]}{4\alpha N^2  \cdot (q'-1)} \nonumber
   \\&+\frac{\lambda_0(q'-1)\cdot \E\Vert  \xhb-\xhbj\Vert_{q'}^2}{16}+\frac{4\delta^2}{\lambda_0(q'-1)} \nonumber
  \\\leq & \frac{\sigma_p^2}{\alpha (q'-1)\lambda_0 N^2} +
  \left[\frac{\mathcal L}{N}+\left(\frac{1}{8}+2\alpha\right)\lambda_0\cdot (q'-1) \right]\E\Vert\xhbj-\xhb\Vert_{q'}^2+\frac{16\mathcal M^2}{\lambda_0\cdot (q'-1)N^2}\nonumber
  \\&+\frac{\lambda_0}{\alpha N^2  \cdot (q'-1)}\cdot  \E[\Vert  \xhb-\xbf^0\Vert_{q'}^2]+\frac{4\delta^2}{\lambda_0(q'-1)} 
\label{vital ahh 2 GC}
\end{align}
 where \eqref{vital ahh 2 GC}    invokes the aforementioned implications of Assumption  \ref{assumption: Variance everywhere}, the relationship that $\E[\Vert \xhb-\xbf^0\Vert_{q'}^2]=\E[\Vert \xhbj-\xbf^0\Vert_{q'}^2]$, and the assumption that $1<q'\leq q$.  As mentioned, $V_{q'}(\cdot)$   is $(q'-1)$-strongly convex w.r.t. the $q'$-norm \citep{ben2001ordered} with $q'\in(1,2]$. Thus, by Assumption \ref{GC condition constant all}, $F_{\lambda_0,N}$ entails $[(q'-1)\lambda_0]$-strong convexity  in the sense of Assumption \ref{SC condition constant all} also w.r.t.\, the $q'$-norm.   This strong convexity --- when combined with the definition of $\xhb$ in \eqref{define solution} and the fact that $q'\leq q$ --- leads to $F_{\lambda_0,N}(\xhbj)-F_{\lambda_0,N}(\xhb)\geq -\delta\Vert\xhb-\xhbj \Vert_q+0.5\cdot \lambda_0\cdot (q'-1)\Vert \xhb-\xhbj\Vert_{q'}^2\geq -\delta\Vert\xhb-\xhbj \Vert_{q'}+0.5\cdot \lambda_0\cdot (q'-1)\Vert \xhb-\xhbj\Vert_{q'}^2$. This together with \eqref{vital ahh 2 GC} (for  $\alpha$ selected to be some universal constant),   the   condition that $N\geq \frac{C_2 \mathcal L}{(q'-1)\lambda_0}$, and  Young's inequality leads to the below, for some universal constant $\widehat C_1>0$:
\begin{align}
\E\left[  \Vert\xhbj-\xhb\Vert^2_{q'}\right] \leq \widehat C_1\frac{\sigma_p^2+\mathcal M^2}{(q'-1)^2\lambda^2_0 N^2}+\frac{\widehat C_1}{N^2 (q'-1)^2}\E[\Vert  \xhb-\xbf^0\Vert_{q'}^2]+\frac{\widehat C_1\delta^2}{\lambda_0^2(q'-1)^2}.
\label{proven gap here}
\end{align}
We  observe that $f_{\lambda_0}(\xhb,\xi_j')$ and $f_{\lambda_0}(\xhbj,\xi_j)$ are identically distributed, so are the pair of $f_{\lambda_0}(\xhb,\xi_j)$ and $f_{\lambda_0}(\xhbj,\xi_j')$.  
Therefore, 
\begin{align}
&\E[F_{\lambda_0}(\xhb)-F_{\lambda_0,N}(\xhb)] =\E\left[\frac{1}{N}\sum_{j=1}^N \left[F_{\lambda_0}(\xhb)-f_{\lambda_0}(\xhb,\xi_j)\right]\nonumber
\right]=\E\left[\frac{1}{N}\sum_{j=1}^N \left[f_{\lambda_0}(\xhb,\xi_j')-f_{\lambda_0}(\xhb,\xi_j)\right]
\right]\nonumber
\\=&\frac{1}{2N}\sum_{j=1}^N \E\left[f_{\lambda_0}(\xhb,\xi_j')-f_{\lambda_0}(\xhb^{(j)},\xi_j')\right]+\frac{1}{2N}\sum_{j=1}^N \E\left[f_{\lambda_0}(\xhb^{(j)},\xi_j)-f_{\lambda_0}(\xhb,\xi_j)\right]\nonumber
\\\leq &\frac{1}{2N}\sum_{j=1}^N \E\left[\langle G_{\lambda_0}(\xhb,\xi_j')-H_{\lambda_0}(\xhb),\,\xhb-\xhb^{(j)}\rangle\right] +\frac{1}{2N}\sum_{j=1}^N \E\left[\langle G_{\lambda_0}(\xhb^{(j)},\xi_j)-H_{\lambda_0}(\xhb^{(j)}),\,\xhb^{(j)}-\xhb\rangle\right]\nonumber
\\&+\frac{1}{2N}\sum_{j=1}^N\E[\langle H_{\lambda_0}(\xhb)-H_{\lambda_0}(\xhbj),\,\xhb-\xhbj\rangle].\nonumber
\end{align}
Invoking \eqref{new useful inequality to ensure v1}, Young's inequality, and the assumption that $q'\leq q$, we further obtain
\begin{align}
&\E[F_{\lambda_0}(\xhb)-F_{\lambda_0,N}(\xhb)]\nonumber
\\{\leq}&\frac{1}{2N}\sum_{j=1}^N \E \left[\frac{8}{N(q'-1)\lambda_0}\left\Vert G_{\lambda_0}(\xhb,\xi_j')- H_{\lambda_0}(\xhb)\right\Vert_p^2 
+\frac{8}{N (q'-1)\lambda_0}\left\Vert H_{\lambda_0}(\xhbj)-G_{\lambda_0}(\xhbj,\xi_j)\right\Vert_p^2\right.\nonumber
\\+&\left.\frac{8\mathcal M^2}{N(q'-1)\lambda_0}+\left(\frac{N(q'-1)\lambda_0}{4}+\mathcal L\right)\Vert\xhb-\xhb^{(j)}\Vert_{q'}^2\right.\left.\vphantom{\frac{M}{N}}+\frac{4\lambda_0}{N(q'-1)}  (\Vert \xhbj -\xbf^0\Vert_{q'}+\Vert  \xhb-\xbf^0\Vert_{q'})^2\right]. \label{a second item to combine}
\end{align}
Recall that (i) it is assumed that $N\geq \frac{C_2\mathcal L}{(q'-1)\lambda_0}$; (ii) $\xhbj$ and $\xhb$ are identically distributed; and (iii) Assumption \ref{assumption: Variance everywhere} holds. We then combine  \eqref{proven gap here} and \eqref{a second item to combine} above to obtain 
\begin{align}
\E[F_{\lambda_0}(\xhb)-F_{\lambda_0,N}(\xhb)] 
\leq &\, \widehat C_2\frac{\sigma_p^2+\mathcal M^2}{(q'-1)\lambda_0 N}+\widehat C_2\frac{\lambda_0}{N\cdot (q'-1)}\E[\Vert  \xhb-\xbf^0\Vert_{q'}^2]  +\frac{\widehat C_2\delta^2N}{\lambda_0(q'-1)}
\label{final result almost GC second last}
\\= &\,\widehat C_2\frac{\sigma_p^2+\mathcal M^2}{(q'-1)\lambda_0 N}+2\widehat C_2\frac{\lambda_0}{N (q'-1)}\E[V_{q'}  (\xhb)]{+\frac{\widehat C_2\delta^2N}{\lambda_0(q'-1)}}, \label{final result almost GC last}
\end{align}
for some universal constant $\widehat C_2>0$,
where     \eqref{final result almost GC last} holds by the definition of $V_{q'}$. In view of   the definition of $\xhb$, 
\begin{align}
& \E\left[\frac{10 \delta^2}{\lambda_0} +\frac{1}{10}\lambda_0V_{q'}(\xhb)+\frac{1}{10}\lambda_0 V_{q'}(\xbf^*)\right]
\nonumber
\\\overset{\text{Young's}}{\geq}~&\delta\E\Bigl[\sqrt{2V_{q'}(\xhb)}+\sqrt{2V_{q'}(\xbf^*)}\Bigr]= \delta\E[\Vert\xhb-\xbf^0\Vert_{q'}+\Vert\xbf^0-\xbf^*\Vert_{q'}]\geq \delta\E[\Vert\xhb-\xbf^*\Vert_{q'}]\nonumber
\\
\overset{\text{Eq.\,\eqref{suboptimality in computing SAA}}}{\geq}& \E[F_{\lambda_0,N}(\xhb)-F_{\lambda_0,N}(\xbf^*)]=\E[F_N(\xhb)+\lambda_0V_{q'}(\xhb)-F_N(\xbf^*)-\lambda_0 V_{q'}(\xbf^*)]\nonumber
\\=&\E[F_N(\xhb)+\lambda_0V_{q'}(\xhb)-F(\xbf^*)-\lambda_0 V_{q'}(\xbf^*)]\nonumber
\\\overset{\text{Eq.\,\eqref{final result almost GC last}}}{\geq} &\E[F(\xhb)+\lambda_0V_{q'}(\xhb)-F(\xbf^*)-\lambda_0 V_{q'}(\xbf^*)]-\widehat C_2\frac{\sigma_p^2+\mathcal M^2}{(q'-1)\lambda_0 N}-2\widehat C_2\frac{\lambda_0}{N (q'-1)}\E[V_{q'}  (\xhb)]-\frac{\widehat C_2\delta^2N}{\lambda_0(q'-1)} \nonumber
\\\geq&\E[\lambda_0V_{q'}(\xhb)-\lambda_0 V_{q'}(\xbf^*)]-\widehat C_2\frac{\sigma_p^2+\mathcal M^2}{(q'-1)\lambda_0 N}-2\widehat C_2\frac{\lambda_0}{N (q'-1)}\E[V_{q'}  (\xhb)]-\frac{\widehat C_2\delta^2N}{\lambda_0(q'-1)}.\nonumber
\end{align}
 In view of the assumption that $N\geq  C_2\frac{(\sigma_p^2+\mathcal M^2)R^*}{(q'-1) \epsilon^2}\geq C_2\frac{1}{q'-1}$   (where we have utilized the assumption that $\sigma_p\geq 1$, $R^*\geq 1$,  and $0<\epsilon\leq 1$, and we may as well also let $C_2\geq 20 \widehat C_2 $), we can re-arrange the inequality above into the below for some universal constants $\widehat C_3 >0$: 
\begin{align*}
\E[\lambda_0V_{q'}(\xhb)]\leq &\frac{11}{8}\E[\lambda_0V_{q'}(\xbf^*)] +\frac{\widehat C_3 }{ \lambda_0(q'-1)}\left(\frac{\sigma_p^2+\mathcal M^2}{ N}+\delta^2N\right)+\frac{\widehat C_3 \delta^2}{\lambda_0},
\end{align*}
Joining this inequality, the specification of $\lambda_0:=\frac{\epsilon}{2R^*}$, and \eqref{final result almost GC last} (cf.,  $N \geq C_2\frac{1}{q'-1}\geq  {20}\widehat C_2\frac{1}{q'-1}$   again) leads to 
\begin{align}
\E[F_{\lambda_0}(\xhb)-F_{\lambda_0,N}(\xhb)]\leq&  \frac{\widehat C_4(\sigma_p^2+\mathcal M^2)}{(q'-1)\lambda_0 N}  + {\frac{11}{160}\epsilon}+\frac{\widehat C_4\delta^2}{\lambda_0}+\frac{\widehat C_4\delta^2N}{\lambda_0(q'-1)},
\label{end of all GC}
\end{align}
for some universal constant $\widehat C_4>0$. 
Then, combining \eqref{start of all GC new}, \eqref{end of all GC}, and the assumption that $\delta\leq  \frac{1}{N} $, we obtain the first inequality of this theorem in \eqref{thm 2 subopt RSAA optimal GC}  after some re-organization. 

Furthermore, if we invoke Markov's inequality  together with \eqref{start of all GC new} and \eqref{end of all GC}, we then obtain \eqref{thm 2 subopt RSAA optimal GC probability} as the second inequality of  this theorem.
\end{proof}
 \smallskip

 \begin{remark}\label{remark: Comparison with SMD convex}
If one  selects  $R^*$   to be comparable to $V_{q'}(\xbf^*)$, it can be  observed that Theorem \ref{thm: second main theorem convex optimal rate} confirms the promised sample complexity in  \eqref{summary first result} for SAA \eqref{Eq: SAA-ell2}  when it is applied to solving a  convex SP problem. Similar to Remark \ref{remark: Comparison with SMD strongly convex} for the strongly convex case above, this theorem shows that the rate of SAA's sample complexity matches  with that of the canonical  SMD methods as discussed by \cite{nemirovski2009robust,ghadimi2013stochastic} and \cite{lan2020first} in solving convex SP problems.  Nonetheless, it is worth noting here that an accelerated variation of  SMD, called   AC-SA \citep{lan2012optimal}, achieves a   better rate on $\mathcal L$ with some more careful design \textcolor{black}{\label{dependence on L revision}--- namely, AC-SA has   a better
dependence on $\mathcal{L}$}.
 \end{remark}

\begin{remark}\label{one norm case}
The stipulation of $q>1$ (and thus not including the choice of $q=1$) is non-critical. Indeed, in the non-trivial case with $d>1$, following the existing discussions of   SMD in the 1-norm setting  \citep{nemirovski2009robust}, the case where Assumption \ref{L-smoothness} holds for  $q=1$ can be subsumed by the consideration of the case with $q=1+\frac{1}{\ln d}>1$ by the fact that 
\begin{align}\Vert \vbf \Vert_{1+\frac{1}{\ln d}}\leq \Vert\vbf\Vert_1\leq e\cdot \Vert \vbf \Vert_{1+\frac{1}{\ln d}},\label{convert norms here}
\end{align} where $e$ is the base of   natural logarithm.
\end{remark}

A proper selection of $\lambda_0$ for this theorem relies on $R^*$, an overestimate of $V_{q'}(\xbf^*)$, which is  equal to half of the squared $q'$-norm distance between the optimal solution $\xbf^*$ and any user-specified initial guess $\xbf^0$. Assuming (straightforward variations of) the knowledge of such a distance  is not uncommon in related literature \citep[e.g., as in][]{loh2011high,loh2017statistical,liu2022high}. In practice, when little is known about the SP's problem structure, one may choose $\xbf^0$ to be any feasible solution and specify $R^*$ to be coarsely large; for instance, one may let $R^*$  be half of the squared $q'$-norm diameter of $\X$, if it is bounded. Starting from this  coarse selection, one may then perform some empirical hyperparameter search for better values of $R^*$  (and thus $\lambda_0$) with the aid of cross validation. Meanwhile, if some problem structure about the SP problem is known, one may incorporate such {\it a priori} knowledge into the construction of $V_{q'}$. For instance, if it is known that $\xbf^*$ satisfies the weak sparsity condition (or the budget/capacity constraint) that $\Vert \xbf^*\Vert_1\leq r$ for some known $r$ \citep{negahban2012unified,bugg2021logarithmic}, then, in view of  \eqref{convert norms here} above, we may construct the regularization term with $q'=1+\frac{1}{\ln d}$ and $\xbf^0=\mathbf 0$. Correspondingly,  $R^*=
\frac{1}{2}\cdot e^2\cdot r^2$.

\begin{remark}\label{discussion in dimension on d}
In the results of both Theorems \ref{thm: suboptimality} and \ref{thm: second main theorem convex optimal rate}, the complexity bounds are completely free from any metric entropy terms, leading to new complexity rates that exhibit significantly better dependence on $d$ than  the complexity benchmark \eqref{reduced rate}.  Furthermore, while   \eqref{reduced rate} is applicable to light-tailed SP problems, our results hold under heavy tails with only a bounded second moment of the underlying randomness.  
To complement Theorems \ref{thm: suboptimality} and \ref{thm: second main theorem convex optimal rate} above,  we are to  additionally discuss our metric entropy-free results specifically under light tails and their comparison with \eqref{reduced rate}   later in Section \ref{sec: Metric entropy-free}.
\end{remark}

\begin{remark} \label{remark: compare to OT p=2}
In comparison with the state-of-the-art  benchmark for the  sample complexity  bounds under heavy tails  by \cite{oliveira2023sample},  our results above are focused on the  ``most heavy-tailed'' cases considered by the benchmark --- only the second moment is assumed to be bounded. In this case, the said benchmark   is summarized below: 
Let $\X^{*,\epsilon}$ be the set of $\epsilon$-suboptimal solutions; namely,
\begin{align}
\X^{*,\epsilon}:=\{\xbf  \in\mathcal X:\, F(\xbf)\leq F(\xbf^*)+\epsilon\}.\label{suboptimal solution set}
\end{align}
Suppose that $\X^{*,\epsilon}$ is bounded and
\begin{align}
\vert f(\xbf,\xi)-f(\ybf,\xi)\vert\leq M(\xi)\cdot \Vert\xbf-\ybf\Vert_q,~~\forall\, \xbf,\ybf\in\X^{*,\epsilon},~\xi\in\Xi,\label{Eq: holder inequality assumption}
\end{align}
then
it holds that
\begin{align}
\Prob[F(\xhb)-F(\xbf^*)\leq \epsilon]\geq 1-\beta-2\rho,
~~\text{if }N\geq
 O\left(\frac{\mathbf M_2\cdot   \left[\Big(\gamma(\X^{*,\epsilon})\Big)^2 + (\mathcal D^{*,\epsilon})^{2}\cdot\ln \frac{1}{\beta}\right]}{\epsilon^2} \right),
\label{Eq: history bound further specialized}
\end{align}
 where  $\mathcal D^{*,\epsilon}$ is the diameter of $\X^{*,\epsilon}$ and $\gamma(\X^{*,\epsilon})$ is a generic chaining-based metric entropy term that grows at the rate of $O(\sqrt{d}\cdot \mathcal D^{*,\epsilon})$ in general ---   elevating the dependence on $d$ in the sample complexity again. Meanwhile, $\mathbf M_2$ is the second moment of  $M(\xi)$ and the  probability term   $\rho$  is given as $$\rho:=\max\left\{\Prob\left[N^{-1}\sum_{j=1}^N\Big(M(\xi_j)\Big)^2>2\mathbf M_2\right],\,\Prob\left[N^{-1}\sum_{j=1}^N\left[f(\xbf^*,\xi_j)-F(\xbf^*)\right]^2>2\upsilon_{\mathbf x^*}\right]\right\},$$
 for $\upsilon_{\xbf^*}$ being the variance of $f(\xbf^*,\xi)$.  Note that the relationship between the term $\rho$ and sample size $N$ is not explicitly reflected in the sample size requirement in \eqref{Eq: history bound further specialized}. Some further explication of this dependence typically requires additional assumptions on the underlying randomness. We provide one illustration (in comparable conditions to our result for SP problems under light tails) in Remark \ref{comparison results to be added}.

To facilitate comparison between our results and \eqref{Eq: history bound further specialized}, we consider some conversions of notations below. 
{\color{black}First, under Assumption \ref{L-smoothness}, $\mathbf M_2$ admits the characterization below: Since $\E[G(\xbf,\xi)]=\nabla F_1(\xbf)+\mathbf g_{F_2}$ for some $\mathbf g_{F_2}\in \partial F(\xbf)-\nabla F_1(\xbf)$, it follows that, in some relatively favorable regimes (e.g., when the problem is unconstrained), $\mathbf M_2 \approx O\!\left(\mathcal L^2\cdot (\mathcal D^{*,\epsilon})^2 + \mathcal M^2 + \sigma_p^2\right)$.}
Second, it is also worth noting that  $\mathcal D^{*,\epsilon}$ is generally comparable  to $\mathcal D_{q}$, the $q$-norm diameter of the feasible region $\X$. Indeed, one may easily construct scenarios where the largest distance between any two $\epsilon$-suboptimal solutions can be insignificantly different from the largest possible distance between any two feasible solutions. One such example is for the expected objective function to be close to a  constant along the affine subspace that includes the pair of feasible solutions with the largest distance between them. Likewise, in general, $M(\xi)$ has to be large enough to apply globally    for all $\xbf,\,\ybf\in\X$.  Thus, \eqref{Eq: holder inequality assumption} is also comparable to \eqref{Lipschitz condition}, which is verifiably more stringent than   Assumption \ref{L-smoothness} for our results. 
{\color{black}
Furthermore, in the case of   $\mu$-strong convexity,  one can further explicate  \eqref{Eq: history bound further specialized}  by noting that $(\mathcal D^{*,\epsilon})^2\leq O(\mu^{-1}\epsilon)$. With the conversion of notations above, as well as the fact that $\gamma(\X^{*,\epsilon})\leq O(\sqrt{d}\cdot \mathcal D^{*,\epsilon})$, we may rewrite \eqref{Eq: history bound further specialized} as $\Prob[F(\xhb)-F(\xbf^*)\leq \epsilon]\geq 1-\beta-2\rho$, if it holds that
\begin{align}
N\geq   \begin{cases}O\Bigg(\left(\frac{\mathcal L^2}{\mu^2}+\frac{\sigma_p^2+\mathcal M^2}{\mu\epsilon} \right)\cdot \bigg(d+\ln(1/\beta)\bigg)\Bigg)&\text{for $\mu$-strongly convex SP};
\\
\\
O\left( \mathcal D^2_q\cdot \frac{(\mathcal L^2\cdot \mathcal D^2_q+\sigma_p^2+\mathcal M^2)\cdot (d+\ln(1/\beta))}{\epsilon^{2}}\right)&\text{for  convex SP}.
\end{cases}
\label{Eq: history bound further specialized for comparison}
\end{align}
}

A side-by-side comparison between \eqref{Eq: history bound further specialized for comparison} and our results in Theorems \ref{thm: suboptimality} and \ref{thm: second main theorem convex optimal rate}  is provided in Table \ref{side-by-side comparison}. One may see that our complexity bounds  entail  three potential advantages as summarized below: 
\begin{table}[h!]
{\color{black}
\centering
\small
\caption{Comparison of sample complexity bounds relative to \eqref{Eq: history bound further specialized for comparison} under Assumption~\ref{L-smoothness}. Here, \(\mathcal{E}(\epsilon):=\{F(\xhb)-F(\xbf^*)\le \epsilon\}\),  $ \mathcal D_q^2\approx V_{q'}(\xbf^*)$, and $q'$ is a hyperparameter. All universal constants are suppressed. }\label{side-by-side comparison}
\renewcommand{\arraystretch}{1.2}
\setlength{\tabcolsep}{6pt}
\begin{center}
\begin{tabular}{l|c|c}
\toprule
& \textbf{Benchmark sample bounds in \eqref{Eq: history bound further specialized for comparison}} 
& \textbf{Sample bounds in Theorems \ref{thm: suboptimality} and \ref{thm: second main theorem convex optimal rate}} \\[1mm]
\hline\addlinespace[1.5pt]
Target&$\Prob[\mathcal E(\epsilon)]\ge 1-\beta-2\rho$& \(\Prob[\mathcal E(\epsilon)]\ge 1-\beta\)
\\[0.5mm]\hline
\multicolumn{3}{c}{\textbf{Strongly convex} (\(\mu>0\))}\\[0.5mm]\hline\addlinespace[3pt]
\(\mathcal L=0\) 
& \( \dfrac{(\sigma_p^2+\mathcal M^2)\big(d+\ln(1/\beta)\big)}{\mu\,\epsilon} \)
& \( \dfrac{\sigma_p^2+\mathcal M^2}{\mu\,\epsilon\,\beta}\)\\[2mm]\addlinespace[3pt]
\(\mathcal M=0\) 
& \( \left(\dfrac{\mathcal L^2}{\mu^2}+\dfrac{\sigma_p^2}{\mu\,\epsilon}\right)\!\big(d+\ln(1/\beta)\big) \)
& \( \max\!\left\{\dfrac{\mathcal L}{\mu},\,\dfrac{\sigma_p^2}{\mu\,\epsilon\,\beta}\right\}\)\\[2mm]\addlinespace[3pt]
\(\mathcal L,\mathcal M>0\) 
& \( \left(\dfrac{\mathcal L^2}{\mu^2}+\dfrac{\sigma_p^2+\mathcal M^2}{\mu\,\epsilon}\right)\!\big(d+\ln(1/\beta)\big) \)
& \( \max\!\left\{\dfrac{\mathcal L}{\mu},\,\dfrac{\sigma_p^2+\mathcal M^2}{\mu\,\epsilon\,\beta}\right\}\)\\[2mm]\hline\addlinespace[3pt]
\multicolumn{3}{c}{\textbf{Convex} (\(\mu=0\))}\\[0.5mm]\hline\addlinespace[3pt]
\(\mathcal L=0\) 
& \( \mathcal D_q^2\cdot \dfrac{\big(\sigma_p^2+\mathcal M^2\big)\big(d+\ln(1/\beta)\big)}{\epsilon^{2}} \)
& \(\dfrac{ V_{q'}(\xbf^*)}{\,q'-1\,}\cdot \dfrac{\sigma_p^2+\mathcal M^2}{\beta\,\epsilon^{2}}\)\\[2mm]\addlinespace[3pt]
\(\mathcal M=0\) 
& \( \mathcal D_q^2\cdot \dfrac{\big(\mathcal L^2\mathcal D_q^2+\sigma_p^2\big)\big(d+\ln(1/\beta)\big)}{\epsilon^{2}} \)
& \(\dfrac{ V_{q'}(\xbf^*)}{\,q'-1\,}\cdot \max\!\left\{\dfrac{\mathcal L}{\epsilon},\,\dfrac{\sigma_p^2}{\beta\,\epsilon^{2}}\right\}\)\\[2mm] \addlinespace[3pt]
\(\mathcal L,\mathcal M>0\) 
& \( \mathcal D_q^2\cdot \dfrac{\big(\mathcal L^2\mathcal D_q^2+\sigma_p^2+\mathcal M^2\big)\big(d+\ln(1/\beta)\big)}{\epsilon^{2}} \)
& \(\dfrac{ V_{q'}(\xbf^*)}{\,q'-1\,}\cdot \max\!\left\{\dfrac{\mathcal L}{\epsilon},\,\dfrac{\sigma_p^2+\mathcal M^2}{\beta\,\epsilon^{2}}\right\}\)\\[3mm]\bottomrule\addlinespace[3pt]
\end{tabular}
\end{center}
}
\end{table}
\begin{itemize}
\item First, similar to Remark \ref{discussion in dimension on d}, the bounds provided in the said theorems  also exhibit    non-trivially better rates with dimensionality $d$ than \eqref{Eq: history bound further specialized for comparison} (and thus \eqref{Eq: history bound further specialized}) in general, due to the worst-case polynomial growth of $[\gamma(\X^{*,\epsilon})]^2$ with $d$.
\item Second, our results  make use of the potential smoothness of the objective function to obtain  sharper bounds. 
In  the more adversarial case of $\mathcal L=0$, our derived complexity grows linearly with  $\mathcal M^2$, leading to comparable   rates to  \eqref{Eq: history bound further specialized for comparison}   (and, thus,  to \eqref{Eq: history bound further specialized}).  
Nonetheless, when   $\mathcal L$  is more dominant than $\mathcal M$,   our new   complexity  bounds  become potentially more efficient.
Particularly, in the more desirable case of $\mathcal M=0$ and $\epsilon$ is reasonably small,  the rates in \eqref{thm 2 subopt} and \eqref{thm 2 subopt RSAA optimal GC probability} of our theorems above can be simplified into
\begin{multline*}
\Prob\left[F(\xhb)-F(\xbf^*)\leq \epsilon\right]\geq 1-\beta, ~~
\text{if\,}N\geq \begin{cases}
O\left( \frac{\sigma_p^2}{\mu\cdot \epsilon\cdot \beta}\right)&\text{for SAA  \eqref{Eq: SAA} in $\mu$-strongly convex SP};
\\[8pt]
O\left( \frac{V_{q'}(\xbf^*)}{q'-1} \frac{\sigma_p^2}{\epsilon^2\cdot \beta} \right)&\text{for SAA  \eqref{Eq: SAA-ell2} in convex SP},
\end{cases}
\end{multline*}
showing a region of parameters to allow SAA's sample complexity to be free from the impact of Lipschitz constants of $\nabla F_1$ and $F_2$.   {\color{black}In contrast,  the benchmark results in \eqref{Eq: history bound further specialized} and  \eqref{Eq: history bound further specialized for comparison} do not explicitly benefit from  the presence (or dominance) of a  component  that admits Lipschitz continuous gradient in the objective function.}
\item Third, our bounds in \eqref{thm 2 subopt} and \eqref{thm 2 subopt RSAA optimal GC probability} provide  explicit dependence on  the significance level $\beta$, while, in contrast, \eqref{Eq: history bound further specialized} carries a more opaque quantity $\rho$. Although further explications of \eqref{Eq: history bound further specialized} have been provided by \cite{oliveira2023sample}, stronger conditions on lighter tails are additionally stipulated therein.
\item Last, while the benchmark result assumes a bounded feasible region, Theorems \ref{thm: suboptimality} and \ref{thm: second main theorem convex optimal rate}    apply to scenarios where   $\X$ is potentially unbounded. 
\end{itemize}
\end{remark}

{\color{black}
\begin{remark}\label{rk: variance v2}
As noted in Remark~\ref{rk: variance}, the value of $\sigma_p^2$ generally depends on $d$. If, in addition, there exists $\phi_p\geq 0$ and $p\ge2$ such that
$\|G_i(\xbf,\xi)-\E [G_i(\xbf,\xi)]\|_{L^p}\le \phi_p$ for each  $i=1,\ldots,d$  and every $\xbf\in\X$,
then we can further invoke \eqref{Eq: bounding dependence on d} to explicate  the said dependence by  $\sigma_p^2\leq d^{2/p}\phi_p^2$. Correspondingly, the dimensional dependence  diminishes   when  it is possible to let $p\ge c\ln d$  for some constant $c>0$.
\end{remark}}

 \begin{remark}\label{RO stability}
 An important component of our proofs resorts to a seemingly novel argument  based on the ``average RO stability'' \citep{shalev2010learnability}, which is related to the average stability \citep{rakhlin2005stability},  uniform-RO stability \citep{shalev2010learnability}, and uniform stability \citep{bousquet2002stability}. While it is known that the average-RO stability can lead to error bounds for learning algorithms \citep{shalev2010learnability}, seldom is there a sample complexity bound for  SAA based on such a stability type in comparable settings of our consideration.  In contrast, as mentioned in Section \ref{sec: related results}, most existing  SAA theories are based on either the ``uniform convergence'' theories, such as the $\epsilon$-net \citep{shapiro2021lectures} or the generic chaining \citep{oliveira2023sample}, or the  variations of uniform   (RO-) stability theories, such as by \citet{feldman2019high,shalev2010learnability,shalev2009stochastic}, and  \citet{klochkov2021stability}.  Therefore, we think that our average-RO stability-based proof approach may also   be of independent interest to some readers. 
 \end{remark}
 
{\label{comment artifact 1}\color{black} Our results in \eqref{thm 2 subopt} and \eqref{thm 2 subopt RSAA optimal GC probability} of Theorems \ref{thm: suboptimality} and \ref{thm: second main theorem convex optimal rate}, respectively,  provide sample complexity bounds at the rate of $O(1/\beta)$ in ensuring a significance level of $\beta\in(0,1)$.  Parts (i) and (ii) of Proposition \ref{lower bound} in Appendix \ref{beta subsection lower bound}  show that such a rate is intrinsic to the SAA problem, instead of a proof artifact.}

\subsection{Large deviations-type, metric entropy-free complexity  bounds}\label{sec: Metric entropy-free}

The previous subsection presents a complexity rate of $O(1/\beta)$ with the significance level $\beta\in(0,1)$. In comparison, when the underlying randomness is light-tailed, many existing results, such as in \eqref{summary typical results} and \eqref{reduced rate},  present more desirable, (poly-)logarithmic  dependence on  $1/\beta$ (although most of these results are polynomial in some  metric entropy terms). It then prompts the question  whether   metric entropy-free bounds can also be derived while preserving a  (poly-)logarithmic rate with $1/\beta$ under similar conditions.    Our affirmative answer to this question is presented in this subsection. 

The assumptions of consideration include a formalized statement of the Lipschitz condition in \eqref{Lipschitz condition} and  that of the underlying randomness, both provided in  the below:

\begin{assumption}\label{assumption f lipschitz}
There exist constants $q>1$, $p>2$, and $\psi_M\ge 1$, and a deterministic and $\mathcal B(\Xi)$-measurable function
$M:\Xi\to\R_+$ such that:
\begin{itemize}
\item[(a)] For any $(\xbf,\ybf)\in\X^2$ and every $\xi\in\Xi$,  
$
\vert f(\xbf,\xi)-f(\ybf,\xi)\vert\leq M(\xi)\cdot \Vert\xbf-\ybf\Vert_q $.
\item[(b)]  It holds that   $\Vert M(\xi)\Vert_{L^p}\leq \psi_M$.
\end{itemize}
\end{assumption}

Assumption \ref{assumption f lipschitz}.(a) is imposed w.r.t.\,the $q$-norm. This is a very common condition in the SAA literature \citep{shapiro2021lectures,shapiro2003monte,shapiro2005complexity}. Compared to the uniform Lipschitz condition as in Eq.\,\eqref{lipschitz global}, Assumption \ref{assumption f lipschitz}.(a) does not impose that $M(\xi)$ should be $\xi$-invariant and thus is more flexible. Meanwhile, Assumption \ref{assumption f lipschitz}.(b) means that $M(\xi)$ should have a finite $p$th  moment, though this random variable can still be heavy-tailed. A related condition is  also imposed by \cite{oliveira2023sample}. 
The specification that $q>1$ (and thus excluding the case of $q=1$) is  non-critical due to the same argument as in Remark \ref{one norm case}.

For perhaps  more interesting results of this section, we impose  light-tailed counterparts   to the above as stated in Assumptions \ref{assumption f lipschitz v2} and \ref{assumption f lipschitz v3} below:
\begin{assumption}\label{assumption f lipschitz v2}
There exist constants $\varphi\geq 1$, $q>1$ and a deterministic and $\mathcal B(\Xi)$-measurable function $M:\Xi\rightarrow\R_+$ such that the following hold:
\begin{itemize}
\item[(a)]  Assumption \ref{assumption f lipschitz}.(a) holds w.r.t.\,the $q$-norm.
\item[(b)] For all $t\geq 0$, it holds that
$
\Prob[M(\xi)\geq t]\leq 2\exp\left(- {t}/{\varphi}\right).
$
\end{itemize}
\end{assumption}
  Assumption \ref{assumption f lipschitz v2}.(b) imposes subexponential tails, a common form of light-tailed assumptions, for the underlying distribution and  is    comparable to the  light-tailed  conditions  discussed  by \cite{shapiro2003monte,shapiro2005complexity} and \cite{shapiro2021lectures}. 

\begin{assumption}\label{assumption f lipschitz v3}
There exist constants $\varphi\geq 1$, $q>1$, and a deterministic and $\mathcal B(\Xi)$-measurable function $M:\Xi\rightarrow\R_+$ such that the following hold:
\begin{itemize}
\item[(a)]  Assumption \ref{assumption f lipschitz}.(a) holds  w.r.t.\,the $q$-norm.
\item[(b)] For   all $t\geq 0$, it holds that
$
\Prob[M(\xi)\geq t]\leq 2\exp\left(- {t^2}/{\varphi^2}\right).
$
\end{itemize}
\end{assumption}
  
 Assumption \ref{assumption f lipschitz v3}.(b) imposes sub-Gaussian tails, which is another form of light-tailed  assumptions for the underlying distribution and is relatively stronger than Assumption \ref{assumption f lipschitz v2}.(b). This assumption   is   similar to  the counterpart conditions imposed by \cite{nemirovski2009robust} in establishing a  large deviations bound for   SMD. 

{\color{black}
\begin{remark} 
\label{rem:orlicz-view}
It is worth noting that Assumptions~\ref{assumption f lipschitz}-\ref{assumption f lipschitz v3} admit a unified formulation in terms of Orlicz norms. Let $\Psi:\R_+\to\R_+$ be a Young function and define the  
Orlicz norm of a nonnegative random variable $X$ by
\[
\|X\|_{\Psi}
:= \inf\bigl\{ t>0 : \E[\Psi(X/t)] \le 1 \bigr\}.
\]
Then the finiteness of $\|M(\xi)\|_{\Psi}$ corresponds to the following
standard choices of~$\Psi$:
\begin{itemize}
\item[(i)] \textit{Finite $p$th moment (potentially heavy-tailed).}
For $\Psi(t)=t^p$ with $p>2$, finiteness of $\|M(\xi)\|_{\Psi}$
is equivalent (up to constants depending only on $p$) to 
$\|M(\xi)\|_{L^p}<\infty$, which is equivalent to  Assumption~\ref{assumption f lipschitz}.(b).

\item[(ii)] \textit{Subexponential tails.}
For $\Psi_1(t)=\exp(t)-1$, finiteness of $\|M(\xi)\|_{\Psi_1}$ implies
a subexponential tail bound of the form, for some constants $c_1'>0$:
\[
\Prob[M(\xi)\ge t] \le 2\exp\Big(-c_1'\cdot t/\|M(\xi)\|_{\Psi_1}\Big), \quad \text{for } t\ge 0,
\]
which matches Assumption~\ref{assumption f lipschitz v2}.(b).

\item[(iii)] \textit{Sub-Gaussian tails.}
For $\Psi_2(t)=\exp(t^2)-1$, finiteness of $\|M(\xi)\|_{\Psi_2}$ implies
a sub-Gaussian tail bound, for some constants $c_2'>0$:
\[
\Prob[M(\xi)\ge t] \le  2 \exp\Big(-c_2'\cdot t^2/\|M(\xi)\|^2_{\Psi_2}\Big), \quad \text{for } t\ge 0,
\]
which matches Assumption~\ref{assumption f lipschitz v3}.(b). 
\end{itemize}
We therefore can view Assumptions~\ref{assumption f lipschitz}--\ref{assumption f lipschitz v3} as three special variations of Orlicz-type conditions. For additional discussions on Orlicz norm and Orlicz space, we would like to refer interested readers to \cite{vershynin2018high}. 
\end{remark}
}

We are now ready to formally state our sample complexity  below, where we recall the notation that $\mathcal D_{q'}$ is the $q'$-norm diameter of the feasible region. Note that we also strengthen Assumption \ref{GC condition constant all} from `almost every $\xi\in\Xi$' to `every $\xi\in\Xi$' in this subsection.

\begin{proposition}\label{thm: large deviation} Let $q\in(1,2]$, $\epsilon\in(0,1]$, and $\beta\in(0,1)$ be any fixed scalars. Denote by $\xhb$  a $(\delta,q)$-approximate solution to  SAA    \eqref{Eq: SAA-ell2} with its parameters specified to be
\begin{align}
q'\in(1,2]:\,q'\leq q,~~~R^*\geq\max\{1,\mathcal D_{q'}^2\},~~~\lambda_0=\frac{\epsilon}{2R^*},~~\text{and}~~\delta\leq \frac{1}{N}.\label{parameter choices new}
\end{align}
 Suppose that   (i) $N\geq 3$,  (ii)    $f(\cdot,\xi)$ is convex on $B$ for every $\xi\in\Xi$, and (iii) $\X$ is bounded with $\mathcal D_{q'}$ being its $q'$-norm diameter; namely $\sup_{\xbf_1,\xbf_2\in\X}\Vert \xbf_1-\xbf_2\Vert_{q'}\leq \mathcal D_{q'}$.
\begin{itemize}
\item[(a)] Under Assumption \ref{assumption f lipschitz} w.r.t. the $q$-norm,   
it holds that
\begin{multline*}
\Prob[F(\xhb)-F(\xbf^*)\leq \epsilon]\geq 1-\beta,~~~ \text{if}~~
N\geq    \left(C_3\cdot\frac{\psi_M^{2}}{\epsilon^{2}} \cdot \textcolor{black}{B_{N,\beta,q'}}\cdot R^*\right)^{\frac{p}{p-2}}\cdot {\beta^{-\frac{2}{p-2}}}+ \left(C_3\cdot\frac{{p} \mathcal D_{q'}}{p-1}\cdot\frac{\psi_M}{\epsilon}\right)^{\frac{p}{p-1}}\beta,
\end{multline*}
where $\textcolor{black}{B_{N,\beta,q'}}:= \frac{1}{q'-1}\ln N\cdot \ln\frac{N}{\beta} $ and $C_3>0$ is some universal constant.
\item[(b)] Under Assumption \ref{assumption f lipschitz v2} w.r.t. the $q$-norm, 
 it holds that
\begin{align*}
\Prob[F(\xhb)-F(\xbf^*)\leq \epsilon]\geq 1-\beta,
~~~ \text{if}~~
N\geq C_4\cdot R^*\cdot\frac{\varphi^2}{\epsilon^2}\cdot  \frac{  \ln N\cdot \ln^3  ({ N/\beta})}{q'-1},
\end{align*}
where   $C_4>0$ is some universal constant.

\item[(c)] Under Assumption \ref{assumption f lipschitz v3} w.r.t. the $q$-norm, it holds that
\begin{align*}
\Prob[F(\xhb)-F(\xbf^*)\leq \epsilon]\geq 1-\beta,
~~~ \text{if}~~
N\geq C_5\cdot R^*\cdot \frac{\varphi^2}{\epsilon^2}\cdot   \frac{\ln N\cdot \ln^2  ({ N/\beta})}{q'-1},
\end{align*}
where   $C_5>0$ is some universal constant.
\end{itemize}
\end{proposition}
\begin{proof} The core of the proof  lies in weakening the assumption of the uniform  Lipschitz condition imposed by a pillar result \citep[by][]{feldman2019high} formally stated in Proposition \ref{very important generalization proposition}.  
To that end, we start by introducing a few notations. Let $f_{\lambda_0}(\xbf,\xi):=f(\xbf,\xi)+\lambda_0 V_{q'}(\xbf)$ for $(\xbf,\xi)\in\X\times \Xi$. For any given $t\geq 0$,  denote that 
\begin{align}
\text{$\mathcal E_t:=\{\xi\in\Xi:\,  M(\xi)  \leq t\}$
and   $\mathcal E_t^N:=\{\boldsymbol\xi_{1,N}=(\xi_1,...,\xi_N)\in\Xi^N:\,M(\xi_j)\leq t,~\forall j=1,...,N\}$}.\label{event total consideration}
\end{align}
\textcolor{black}{An illustration of the relations between $\mathcal E_t$ and $\mathcal E_t^N$ is provided in Figure \ref{fig: schema}.(a).}

Furthermore, we use the following notations related to a family of $\zeta_1,...,\zeta_j,...,\zeta_N,\zeta_1',...,\zeta'_j,...,\zeta_N'\in\Xi$:
\begin{itemize}
\item We denote that $\boldsymbol\zeta_{1,N} =(\zeta_1,...,\zeta_N)$ and $\boldsymbol\zeta_{1,N}' =(\zeta_1',...,\zeta_N')$; namely, $\boldsymbol\zeta_{1,N}$ is the collection of all $\zeta_j$ and $\boldsymbol\zeta_{1,N}'$ is that of all $\zeta_j'$. 
\item We also denote that \begin{align}\boldsymbol\zeta^{(j)}_{1,N} =(\zeta_1,...,\zeta_{j-1},\,\zeta_j',\,\zeta_{j+1},...,\zeta_N)\label{special notation}
\end{align}
for all $j=1,...,N$. With this notation, all but one components of $\boldsymbol\zeta^{(j)}_{1,N}$ are the same as $\boldsymbol\zeta_{1,N}$ and the only difference  is  in the $j$th component. Namely, by switching the $j$th component of $\boldsymbol\zeta_{1,N}$  with $\zeta_j'$, one obtains $\boldsymbol\zeta^{(j)}_{1,N}$. 
\end{itemize}
We introduce short-hand notations: \begin{align}\gamma:=\frac{4\cdot (t+\lambda_0 \mathcal D_{q'})^2}{N\cdot \lambda_0\cdot (q'-1)}~~\text{and}~~\varkappa_\delta:=\frac{4\delta\cdot (t+\lambda_0\mathcal D_{q'})}{\lambda_0\cdot (q'-1)}.\label{eq: define gamma} 
\end{align} 
Finally, we define  $d_{\gamma}:\Xi^{N}\times \Xi^N\rightarrow \R_+$ to be the Hamming distance such that
\begin{align}
d_{\gamma}(\boldsymbol\zeta_1,\boldsymbol\zeta_2) = \sum_{j=1}^N \gamma\cdot \mathds{1}(\zeta_{1,j}\neq \zeta_{2,j}),~~~~~\text{for~}(\boldsymbol\zeta_1,\boldsymbol\zeta_2)\in\Xi^{N}\times \Xi^{N}.\label{hamming}
\end{align}
We are now ready to start our proof.

\begin{figure}
    \centering
    \small 
    \begin{tabular}{cc}
    \includegraphics[width=0.47\textwidth]{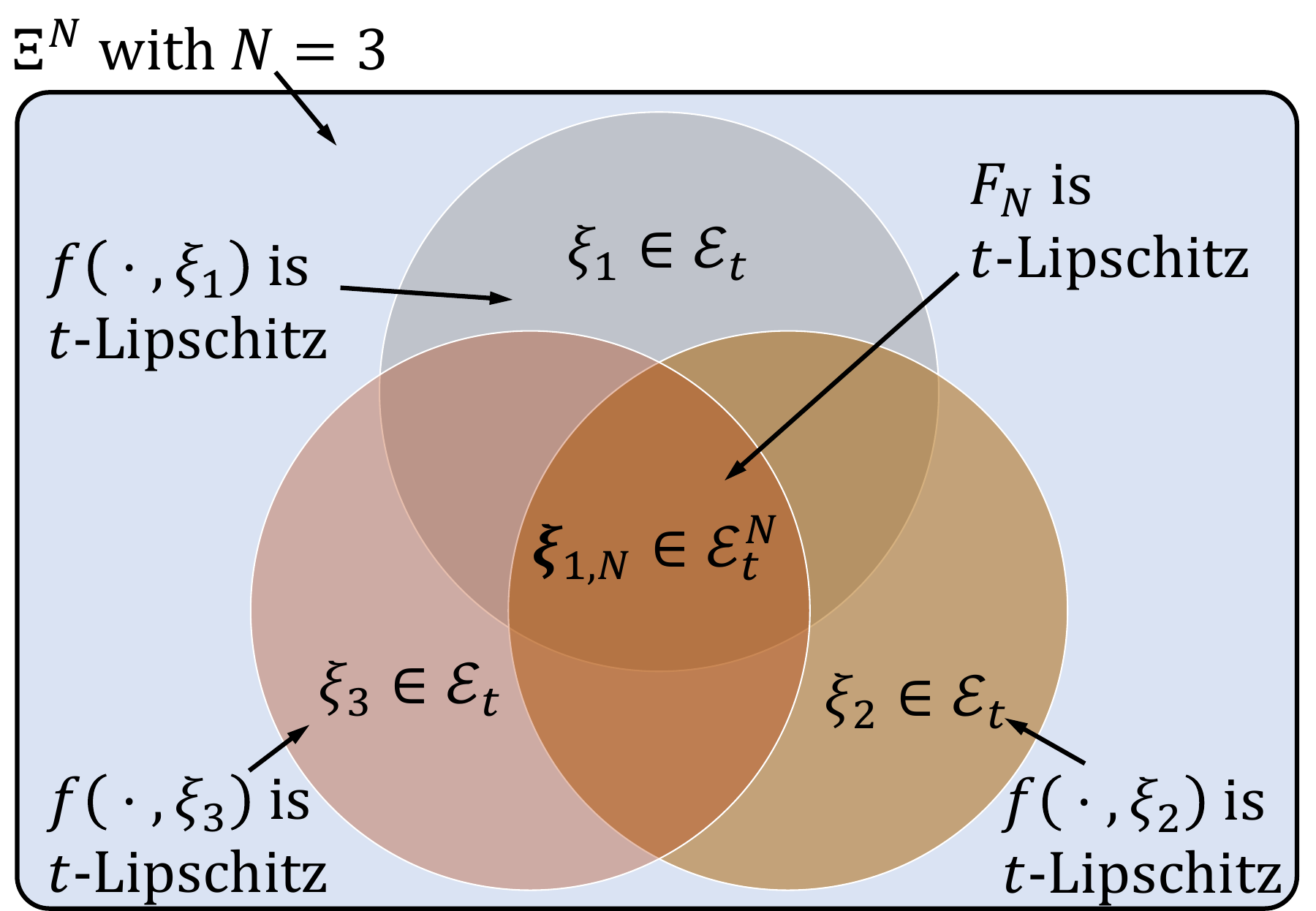} &
\begin{tikzpicture}[
    >=latex,
    node distance=0.7cm,
    font=\small,
    mainstep/.style={
        rounded corners,
        draw=black,
        very thick,
        top color=gray!10,
        bottom color=gray!10,
        inner sep=6pt,
        text width=6cm,
        align=left
    },
    substep/.style={
        rounded corners,
        draw=black,
        thick,
        fill=white,
        inner sep=4pt,
        text width=6cm,
        align=left
    }
]

\node[substep, fill=none, draw=none, text width=7cm] (s1title)
    {\textbf{Step 1.} Show $g(\xi_{1,N},\xi)$ satisfies conditions for \textbf{Proposition \ref{very important generalization proposition}}.};

\node[substep, below=0.1cm of s1title] (s11)
    {\textbf{Step 1.1.} Show $g(\,\cdot\,,\zeta)$ is Lipschitz continuous w.r.t.\ Hamming distance $d_\gamma$ for all $\zeta\in\Xi$.};

\node[substep, below=0.1cm of s11] (s12)
    {\textbf{Step 1.2.} Show $g\in[0,2\mathcal D_{q'}(t+\lambda_0\cdot \mathcal D_{q'})]$.};

\begin{scope}[on background layer]
\node[mainstep, fit=(s1title) (s11) (s12)] (step1) {};
\end{scope}

\node[mainstep, below=0.3cm of step1] (step2)
    {\textbf{Step 2.} Apply {\bf Proposition \ref{very important generalization proposition}} and {\bf Lemma \ref{important lemma: approximation error}} on $g(\xi_{1,N},\xi)$ to obtain inequality~\eqref{almost done complete p explicit}.};

\node[mainstep, below=0.3cm of step2] (step3)
    {\textbf{Step 3.} Simplify \eqref{almost done complete p explicit} to obtain the desired.};

\draw[->, thick] (step1.south) -- ++(0,-0.25) -- (step2.north);
\draw[->, thick] (step2.south) -- ++(0,-0.25) -- (step3.north);

\end{tikzpicture}  \\
(a). 
&(b).  
\end{tabular}
\caption{\color{black}(a) A Venn diagram to illustrate  connections among $\Xi^N$, $\mathcal E_t$, and $\mathcal E_t^N$ in the case with $N=3$, and  (b) a proof roadmap for Proposition \ref{thm: large deviation}.}\label{fig: schema}
\end{figure}

{\it [To prove Part (a)]:} Our proof is in three steps. In Step 1, we construct a surrogate function $g$ and prove that it satisfies all the conditions needed to invoke Proposition \ref{very important generalization proposition}. Step 2 shows that  applying Proposition \ref{very important generalization proposition} on $g$ can imply a useful inequality for $f_{\lambda_0}(\widetilde{\xbf}(\cdot),\cdot)-f_{\lambda_0}(\xbf^*,\zeta)$ as in \eqref{almost done complete p explicit}. The necessary correspondence to enable the said implication is proven in Lemma \ref{important lemma: approximation error}, which shows that the difference between $g$ and $f_{\lambda_0}(\widetilde{\xbf}(\cdot),\cdot)-f_{\lambda_0}(\xbf^*,\zeta)$ can be well controlled when their arguments are within a set that defines a high-probability event. Step 3 then simplifies inequality \eqref{almost done complete p explicit} to further obtain the desired results. \textcolor{black}{A summary of this roadmap is presented in  Figure \ref{fig: schema}.(b).}



{\color{black}\bf Step 1.}
Define the said surrogate  function $g:\Xi^N\times \Xi\rightarrow\R$  as  
\begin{multline}
g(\boldsymbol\zeta_{1,N},\zeta):=   \min\Bigg\{2\mathcal D_{q'}\cdot (\lambda_0\mathcal D_{q'}+t),
\\  \inf_{\boldsymbol z\in \mathcal E_t^N }\left\{\mathds{1}(\zeta\in\mathcal E_t) \cdot \left[f_{\lambda_0}(\widetilde{\xbf}(\boldsymbol z),\zeta)+d_{\gamma}\left(\boldsymbol\zeta_{1,N},\boldsymbol z\right)+\mathcal D_{q'}\cdot (\lambda_0\mathcal D_{q'}+t)-f_{\lambda_0}(\xbf^*,\zeta) \vphantom{V^{V^V}_{V_V}}\right]\right\}\Bigg\},\label{constructed new function}
\end{multline}
with the convention that infimum over $\emptyset$ is $+\infty$. Note that $\Xi^N\times\Xi$ is a standard Borel space and $\mathcal E_t^N\in\mathcal B(\Xi^N)$.
Define
$
\mathcal H(\boldsymbol\zeta_{1,N},\zeta,\boldsymbol z)
:=\mathds{1}(\zeta\in \mathcal E_t)\Big(f_{\lambda_0}(\widetilde{\xbf}(\boldsymbol z),\zeta)
+d_\gamma(\boldsymbol\zeta_{1,N},\boldsymbol z)
+\mathcal D_{q'}(\lambda_0\mathcal D_{q'}+t)
-f_{\lambda_0}(\xbf^*,\zeta)\Big).$
Because $\widetilde{\xbf}:\Xi^N\to\X$ is $\mathcal B(\Xi^N)$-measurable,
$f_{\lambda_0}$ is $\mathcal B(B)\otimes\mathcal B(\Xi)$-measurable as well as $d_\gamma$ is Borel measurable on $\Xi^N\times\Xi^N$,
the mapping $\mathcal H$ is Borel measurable on $\Xi^N\times\Xi\times\Xi^N$.
Therefore,   $(\boldsymbol\zeta_{1,N},\zeta)\ \mapsto\ \inf_{\boldsymbol z\in\mathcal E_t^N} \mathcal H(\boldsymbol\zeta_{1,N},\zeta,\boldsymbol z)$ is universally measurable on $\Xi^N\times\Xi$ \citep[in view of Corollary 7.42.1 and Proposition 7.47 by][]{bertsekas1996stochastic}. 
Consequently, $g$ is measurable w.r.t.\ the $P^{N+1}$-completion of  $\mathcal B(\Xi^N\times \Xi)$ and $g(\boldsymbol\xi_{1,N},\xi)$ is measurable w.r.t.\ the $\Prob$-completion of   $\sigma(\boldsymbol\xi_{1,N},\xi)$.
Below, we would like to show that Proposition \ref{very important generalization proposition} applies to $g$, using the result from Step 1. To that end, we will verify that $g$ satisfies the conditions for the said proposition. 

{\bf Step 1.1.} We would like to first show that the following inequality holds for every $j=1,...,N$ and for all $(\zeta_1,...,\zeta_N,\zeta'_1,...,\zeta_N')\in\Xi^{2N},\,\zeta\in\Xi$:
\begin{align}
\vert g(\boldsymbol\zeta_{1,N},\zeta)-g(\boldsymbol\zeta^{(j)}_{1,N},\zeta)\vert\leq \gamma:=\frac{4\cdot (t+\lambda_0 \mathcal D_{q'})^2}{N\cdot \lambda_0\cdot (q'-1)},\label{redefine gamma}
\end{align}
where  we recall the definition of  $\boldsymbol\zeta^{(j)}_{1,N}$  (as well as its relations with $\boldsymbol\zeta_{1,N}$ and $\zeta_j'$)   in \eqref{special notation}.
To that end,   we observe that, for all $(\boldsymbol\zeta_{1,N},\zeta_j',\zeta)\in\Xi^N\times \Xi\times \mathcal E_t,$ and every $j=1,...,N,$ 
\begin{align}
&\bigg\vert\inf_{\boldsymbol z\in\mathcal E_t^N }\left\{f_{\lambda_0}(\widetilde\xbf(\boldsymbol z),\zeta)+\mathcal D_{q'}\cdot (\lambda_0\mathcal D_{q'}+t)-f_{\lambda_0}(\xbf^*,\zeta)+d_{\gamma}(\boldsymbol\zeta_{1,N},\boldsymbol z)\right\}\nonumber
\\
&~~~~-\inf_{\boldsymbol z\in\mathcal E_t^N }\left\{\vphantom{V^{V^V}_{V_V}}f_{\lambda_0}(\widetilde\xbf(\boldsymbol z),\zeta)+\mathcal D_{q'}\cdot (\lambda_0\mathcal D_{q'}+t)-f_{\lambda_0}(\xbf^*,\zeta)+d_{\gamma}(\boldsymbol\zeta^{(j)}_{1,N},\boldsymbol z)\right\}\bigg\vert \nonumber
\\\leq &\sup_{\zbf\in\mathcal E_t^N}\left\vert \left\{\vphantom{V^{V^V}_{V_V}}f_{\lambda_0}(\widetilde\xbf(\boldsymbol z),\zeta)+\mathcal D_{q'}\cdot (\lambda_0\mathcal D_{q'}+t)-f_{\lambda_0}(\xbf^*,\zeta)+d_{\gamma}(\boldsymbol\zeta_{1,N},\boldsymbol z)\right\}\right.\nonumber
\\
&~~~~\left.- \left\{f_{\lambda_0}(\widetilde\xbf(\boldsymbol z),\zeta)+\mathcal D_{q'}\cdot (\lambda_0\mathcal D_{q'}+t)-f_{\lambda_0}(\xbf^*,\zeta)+d_{\gamma}(\boldsymbol\zeta^{(j)}_{1,N},\boldsymbol z)\right\}\right\vert \nonumber
\\=&\sup_{\zbf\in\mathcal E_t^N}\left\vert   d_{\gamma}(\boldsymbol\zeta_{1,N},\boldsymbol z)-d_{\gamma}(\boldsymbol\zeta^{(j)}_{1,N},\boldsymbol z) \right\vert\leq\gamma,\nonumber
\end{align}
where the last inequality is due to the triangular inequality satisfied by Hamming distance. This then further leads to
\begin{multline}
\bigg\vert \inf_{\boldsymbol z\in\mathcal E_t^N }\Big\{\mathds{1}(\zeta\in\mathcal E_t)\cdot\left[f_{\lambda_0}(\widetilde\xbf(\boldsymbol z),\zeta)+\mathcal D_{q'}\cdot (\lambda_0\mathcal D_{q'}+t)-f_{\lambda_0}(\xbf^*,\zeta)+d_{\gamma}(\boldsymbol\zeta_{1,N},\boldsymbol z)\right]\Big\}
\\
 -\inf_{\boldsymbol z\in\mathcal E_t^N }\bigg\{\mathds{1}(\zeta\in\mathcal E_t)\cdot\left[f_{\lambda_0}(\widetilde\xbf(\boldsymbol z),\zeta)+\mathcal D_{q'}\cdot (\lambda_0\mathcal D_{q'}+t)-f_{\lambda_0}(\xbf^*,\zeta)+d_{\gamma}(\boldsymbol\zeta^{(j)}_{1,N},\boldsymbol z)\right]\bigg\}\bigg\vert\leq\gamma,\label{explicit bound for g}
\end{multline}
for every $(\boldsymbol\zeta_{1,N},\zeta_j',\zeta)\in\Xi^N\times \Xi\times \Xi$ and all $j=1,...,N.$

{\bf Step 1.2.}
 We would like to secondly  show that the range of $g$  obey $g(\cdot,\cdot)\in[0,\,2\mathcal D_{q'} (t+\lambda_0\mathcal D_{q'})]$.
 To that end, we first observe that the upper bound to the range is evident by the definition of $g$. To prove the lower bound (namely, non-negativity of $g$), we   observe that, for any $\xbf\in \X$,   when $\zeta\in\mathcal E_t$, then Assumption \ref{assumption f lipschitz}.(a) combined with $q'\leq q$ implies that $\vert f_{\lambda_0}(\xbf,\zeta)-f_{\lambda_0}(\xbf^*,\zeta)\vert \leq (t+\lambda_0\mathcal D_{q'})\cdot \Vert\xbf-\xbf^*\Vert_{q'}\leq (t+\lambda_0\mathcal D_{q'})\cdot\mathcal D_{q'}.$ Thus, $0\leq f_{\lambda_0}(\xbf,\zeta)+(t+\lambda_0\mathcal D_{q'})\cdot \mathcal D_{q'}-f_{\lambda_0}(\xbf^*,\zeta)$ and, consequently, $g(\cdot,\zeta)\geq 0$. Meanwhile, in the case where $\zeta\notin\mathcal E_t$, we have $g(\cdot,\zeta)=0$. Combining both cases, we have established that the range of $g(\cdot,\cdot)$ meets the desired criteria.   Steps 1.1 and 1.2 together have now verified all the conditions needed to apply Proposition \ref{very important generalization proposition}. Comparing \eqref{explicit bound for g} with the definition of $g$ (as   in \eqref{constructed new function}), we know that  the desired relationship in \eqref{redefine gamma} holds.

{\color{black}\bf Step 2.} We are now ready to apply Proposition \ref{very important generalization proposition} on $g$. 
Step  1 has verified   the conditions for the said proposition, which then immediately implies that 
\begin{align}
 \Prob\left[\left\vert \E_{\xi}[g(\boldsymbol\xi_{1,N},\xi)]-\frac{1}{N}\sum_{j=1}^N  g(\boldsymbol\xi_{1,N},\xi_j)\right\vert\geq  c \Delta_\gamma(t)\right]\leq \frac{\beta}{2},\label{use of proposition}
\end{align}
where $c >0$ is some universal constant, $\boldsymbol\xi_{1,N}=(\xi_1,...,\xi_N)\in\Xi^N$ collects $N$-many i.i.d. copies of $\xi$,  $\gamma$ is defined as in \eqref{redefine gamma}, and $\Delta_\gamma(t)$    is a short-hand notation defined as
\begin{align} 
\Delta_\gamma(t):= \gamma \ln N  \cdot \ln \frac{2N}{\beta}+\sqrt{\frac{4\mathcal D_{q'}^2(\lambda_0\mathcal D_{q'}+t)^2}{N}\cdot \ln(2/\beta)}.\label{eq: define delta}
\end{align}  
Let  $\varkappa_\delta:=\frac{4\delta\cdot (t+\lambda_0\mathcal D_{q'})}{\lambda_0\cdot (q'-1)}$ and recall the notation
$\xhb:=\widetilde{\xbf}(\boldsymbol\xi_{1,N})$. We first observe that
\begin{align}
&\Prob\Big[\E_{\xi}\left[\left\{f_{\lambda_0}(\xhb,\xi)-f_{\lambda_0}(\xbf^*,\xi)\right\}\cdot \mathds{1}(\xi\in\mathcal E_t)\right]  \leq c\cdot\Delta_\gamma(t)+ \varkappa_\delta+\delta\cdot  \mathcal D_{q'}\Big] \nonumber
\\\geq&\Prob\Big[  \E_{\xi}\left[\left\{f_{\lambda_0}(\xhb,\xi)-f_{\lambda_0}(\xbf^*,\xi)\right\}\cdot \mathds{1}(\xi\in\mathcal E_t)\right]  \leq c\cdot\Delta_\gamma(t) +\varkappa_\delta+\delta\cdot  \mathcal D_{q'} ,\,\boldsymbol\xi_{1,N}\in \mathcal E_t^N\Big] \nonumber 
\\
\geq&\Prob\left[  \E_{\xi}[\left\{f_{\lambda_0}(\xhb,\xi)-f_{\lambda_0}(\xbf^*,\xi)\right\} \mathds{1}(\xi\in\mathcal E_t)]-\sum_{j=1}^N  \frac{f_{\lambda_0}(\xhb,\xi_j)-f_{\lambda_0}(\xbf^*,\xi_j)}{N} \leq c \Delta_\gamma(t)  +\varkappa_\delta,\,\boldsymbol\xi_{1,N}\in \mathcal E_t^N\right], \label{reducing term}
\end{align}
where \eqref{reducing term} is due to \eqref{suboptimality in computing SAA}, the definition of $\mathcal D_{q'}$, as well as $q'\leq q$ (which implies that $\frac{1}{N}\sum_{j=1}^N f_{\lambda_0}(\xbf^*,\xi_j)-\frac{1}{N}\sum_{j=1}^Nf_{\lambda_0}(\xhb,\xi_j)\geq -\delta\cdot \mathcal D_{q'}$). In view of the fact that $ \boldsymbol\xi_{1,N}\in\mathcal E_t^N\Longrightarrow \xi_j\in\mathcal E_t$ for all $j=1,...,N$, we then may continue from the above to obtain:
\begin{align}
&\Prob\Big[\E_{\xi}\left[\left\{f_{\lambda_0}(\xhb,\xi)-f_{\lambda_0}(\xbf^*,\xi)\right\}\cdot \mathds{1}(\xi\in\mathcal E_t)\right]  \leq c\cdot\Delta_\gamma(t)+ \varkappa_\delta+\delta\cdot  \mathcal D_{q'}\Big] \nonumber
\\\geq  &
\Prob\left[\left\vert \E_{\xi}[[f_{\lambda_0}(\xhb,\xi)
-f_{\lambda_0}(\xbf^*,\xi)]\cdot \mathds{1}(\xi\in\mathcal E_t)]\vphantom{\frac{1}{N}\sum_{j=1}^N}\right.\right.\nonumber\\&~~~~~~\left.\left.-\frac{1}{N}\sum_{j=1}^N  [f_{\lambda_0}(\xhb,\xi_j)-f_{\lambda_0}(\xbf^*,\xi_j)]\cdot \mathds{1}(\xi_j\in\mathcal E_t)\right\vert\leq  c\cdot\Delta_\gamma(t)+\varkappa_\delta,\vphantom{\frac{1}{N}}\,\boldsymbol\xi_{1,N}\in \mathcal E_t^N\right] \label{test new 2}
\\= &
\Prob\left[\left\vert \E_{\xi}\left[\vphantom{V^{V^V}}[f_{\lambda_0}(\xhb,\xi)+\mathcal D_{q'}\cdot (\lambda_0\mathcal D_{q'}+t)-f_{\lambda_0}(\xbf^*,\xi)]\cdot \mathds{1}(\xi\in\mathcal E_t)\right]\vphantom{\frac{1}{N}}\right.\right.\nonumber
\\&
\left.\left.-\frac{1}{N}\sum_{j=1}^N  [f_{\lambda_0}(\xhb,\xi_j)+\mathcal D_{q'}\cdot (\lambda_0\mathcal D_{q'}+t)-f_{\lambda_0}(\xbf^*,\xi_j)]\cdot \mathds{1}(\xi_j\in\mathcal E_t)\right\vert\leq  c\cdot\Delta_\gamma(t)+\varkappa_\delta,\,\boldsymbol\xi_{1,N}\in \mathcal E_t^N\right]\nonumber
\\ \text{\tiny (Lemma \ref{important lemma: approximation error})}~{\geq } &\,\,\,
\Prob\left[\left\vert \E_{\xi}[g(\boldsymbol\xi_{1,N},\xi)]-\frac{1}{N}\sum_{j=1}^N  g(\boldsymbol\xi_{1,N},\xi_j)\right\vert\leq  c\cdot\Delta_\gamma(t),\,\boldsymbol\xi_{1,N}\in \mathcal E_t^N\right] \nonumber
\\\geq &1-
\Prob\left[\left\vert \E_{\xi}[g(\boldsymbol\xi_{1,N},\xi)]- \sum_{j=1}^N  \frac{g(\boldsymbol\xi_{1,N},\xi_j)}{N}\right\vert\geq  c \Delta_\gamma(t)\right]-\sum_{j=1}^N \Prob [ \xi_{j}\notin\mathcal E_t]
\stackrel{\eqref{use of proposition}}{\geq} 
1-\frac{\beta}{2}-\sum_{j=1}^N \Prob [ \xi_{j}\notin\mathcal E_t],\label{almost done complete p explicit}
\end{align}
where  the first inequality in \eqref{almost done complete p explicit} is due to the combination of the union bound and the De Morgan's law.

{\bf Step 3.} Below, we will invoke several observations to simplify the probability bound in \eqref{almost done complete p explicit} into the desired results.  Firstly, we observe that
\begin{align}
&\E_{\xi}\left\{\left[f_{\lambda_0}(\xhb,\xi)-f_{\lambda_0}(\xbf^*,\xi)\right]\cdot \mathds{1}(\xi\in\mathcal E_t)\right\}
\\=&\E_{\xi}
\left[ f_{\lambda_0}(\xhb,\xi) -f_{\lambda_0}(\xbf^*,\xi)\right] -\E_{\xi}
\left\{\left[f_{\lambda_0}(\xhb,\xi)-f_{\lambda_0}(\xbf^*,\xi)\vphantom{V^{V^V}}\right]\cdot  \mathds{1}(\xi\notin\mathcal E_t)\right\}
\nonumber
\\=&F_{\lambda_0}(\xhb) -F_{\lambda_0}(\xbf^*) -\E_{\xi}
\left\{\left[f_{\lambda_0}(\xhb,\xi)-f_{\lambda_0}(\xbf^*,\xi)\vphantom{V^{V^V}}\right]\cdot  \mathds{1}(\xi\notin\mathcal E_t)\right\}.\label{to be used more}
\end{align}
We may achieve further explication of the last term above  by invoking (i) Assumption \ref{assumption f lipschitz}.(a), (ii) the choice of parameter that $q'\leq q$, and (iii) the definition of $\mathcal D_{q'}$, obtaining
$\E_{\xi}
\left\{\left[f_{\lambda_0}(\xhb,\xi)-f_{\lambda_0}(\xbf^*,\xi)\vphantom{V^{V^V}}\right]\cdot  \mathds{1}(\xi\notin\mathcal E_t)\right\}\leq     \E_{\xi}
\left\{\left[\mathcal D_{q'}\cdot M(\xi)+\lambda_0\mathcal D_{q'}^2\vphantom{V^{V^V}}\right] \cdot \mathds{1}(M(\xi)> t)\right\}.$
Note here  $M(\xi)\cdot \mathds{1}(M(\xi)> t)$ is   non-negative. We may continue from the above to obtain:  
\begin{align}&\E_{\xi}
\left\{\left[f_{\lambda_0}(\xhb,\xi)-f_{\lambda_0}(\xbf^*,\xi)\vphantom{V^{V^V}}\right]\cdot  \mathds{1}(\xi\notin\mathcal E_t)\right\}\leq  \mathcal D_{q'}\cdot\mathbb E[M(\xi)\cdot   \mathds{1}(M(\xi)> t)]+\lambda_0\mathcal D_{q'}^2\cdot\Prob\left[ M(\xi)> t\right]
\\=&\mathcal D_{q'}\cdot\int_{0}^\infty \Prob\left[M(\xi)\cdot   \mathds{1}(M(\xi)> t)\geq s\vphantom{V^{V^V}_{V_V}}\right] ds +\lambda_0\mathcal D_{q'}^2\cdot \Prob\left[ M(\xi)> t\right]\nonumber
\\=& \mathcal D_{q'}\cdot\int_{t}^\infty \Prob\left[M(\xi)\cdot   \mathds{1}(M(\xi)> t)\geq s\vphantom{V^{V^V}_{V_V}}\right] ds +\mathcal D_{q'}\cdot\int_{0}^t\Prob\left[M(\xi)\cdot   \mathds{1}(M(\xi)> t)\geq s\vphantom{V^{V^V}_{V_V}}\right] ds+\lambda_0\mathcal D_{q'}^2\cdot \Prob\left[ M(\xi) > t\right]\nonumber
\\\leq& \mathcal D_{q'}\cdot\int_{t}^\infty\Prob\left[M(\xi)\geq s, M(\xi)\geq t\vphantom{V^{V^V}_{V_V}}\right] ds + \mathcal D_{q'}\cdot\int_{0}^t\Prob\left[M(\xi)\geq s, M(\xi)\geq t\vphantom{V^{V^V}_{V_V}}\right] ds +\lambda_0\mathcal D_{q'}^2\cdot \Prob\left[ M(\xi) > t\right]\nonumber
\\=& \mathcal D_{q'}\int_{t}^\infty \Prob\left[M(\xi)\geq s\vphantom{V^{V^V}_{V_V}}\right] ds +\mathcal D_{q'}\int_{0}^{t}\Prob\left[ M(\xi)\geq t\right]ds+\lambda_0\mathcal D_{q'}^2\cdot\Prob\left[ M(\xi) > t\right]\nonumber
\\\leq & \mathcal D_{q'}\int_{t}^{\infty}\frac{\psi_M^p}{s^p}ds +  \mathcal D_{q'}\int_{0}^{t}\frac{\psi_M^p}{t^p}ds+\lambda_0\mathcal D_{q'}^2\cdot \frac{\psi_M^p}{t^p}=\frac{\mathcal D_{q'}}{p-1}\cdot \frac{\psi_M^p}{t^{p-1}}+\mathcal D_{q'}\frac{\psi_M^p}{t^{p-1}}+\lambda_0\mathcal D_{q'}^2\cdot \frac{\psi_M^p}{t^p}\label{to explain}
\end{align}
where the  inequality in Eq.\,\eqref{to explain} is due to  Markov's inequality under Assumption \ref{assumption f lipschitz}.(b).


Invoking (i) Equations \eqref{to be used more} and \eqref{to explain},  (ii) the choice    that $R^*\geq \max\{1,\mathcal D_{q'}^2\}$, (iii) the relationship that $\lambda_0=\frac{\epsilon}{2R^*}$, 
(iv) the definition of $F_{\lambda_0}$, and (v) for every $j=1,...,N$, the fact that $\Prob[\xi_j\notin \mathcal E_t]\leq \frac{\psi^p_M}{t^p} $ for any given $t>0$ (which is due to Markov's inequality under Assumption \ref{assumption f lipschitz}.(b)), we can re-organize \eqref{almost done complete p explicit}   into the below:
\begin{align}
 F(\xhb)-F(\xbf^*)-\frac{\epsilon}{4}\leq &F(\xhb)-F(\xbf^*)-\lambda_0 V_{q'}(\xbf^*)
 \leq   
 \,F_{\lambda_0}(\xhb)-F_{\lambda_0}(\xbf^*) \nonumber
\\\leq & c \Delta_\gamma(t)+\varkappa_\delta+ \mathcal D_{q'}\delta +\left(\frac{\mathcal D_{q'}p}{p-1}\cdot \frac{1}{t^{p-1}} + \frac{\epsilon}{2 t^p} \right)\cdot \psi_M^p,\label{almost the result a}
\end{align}
  with probability at least $1-\frac{\beta}{2}-\sum_{j=1}^N \Prob [ \xi_{j}\notin\mathcal E_t]\geq 1-\frac{\beta}{2}-\frac{N\psi^p_M}{t^p}.$
  Here, we recall that $\Delta_\gamma(\cdot)$ is defined as in \eqref{eq: define delta}. 

Recall that $\lambda_0 = \frac{\epsilon}{2R^*}$  {and $\gamma$ is chosen as in (\ref{redefine gamma})}. We   let $t=\left(\frac{2N}{\beta}\right)^{1/p}\psi_M$ in \eqref{almost the result a} to achieve 
\begin{align}
F(\xhb)-F(\xbf^*)\, 
  {\leq}~ &c\cdot \Delta_{\gamma}\left(\left(\frac{2N}{\beta}\right)^{1/p}\psi_M\right)+\frac{p\mathcal D_{q'}}{p-1}\cdot \psi_M\cdot \left(\frac{\beta}{2N}\right)^{1-1/p}+\frac{\epsilon\cdot \beta}{4 N}+\varkappa_\delta+\delta \cdot \mathcal D_{q'}+\epsilon/4\nonumber
\\\stackrel{\eqref{eq: define delta}}{\leq} & \frac{\widehat C_1\cdot R^*\cdot \left[\left(\frac{2N}{\beta}\right)^{1/p}\psi_M+\lambda_0 \mathcal D_{q'}\right]^2}{N\cdot  {\epsilon}\cdot (q'-1)}\cdot \ln N  \cdot \ln \frac{2N}{\beta}+(1+ \beta/N)\cdot \frac{\epsilon}{4}+\varkappa_\delta+\delta\cdot\mathcal D_{q'}\nonumber
\\&+\widehat C_1\sqrt{\frac{\mathcal D_{q'}^2\left[\lambda_0\mathcal D_{q'}+\left(\frac{2N}{\beta}\right)^{1/p}\psi_M\right]^2}{N}\cdot \ln(2/\beta)}+\frac{p\mathcal D_{q'}}{p-1}\cdot \psi_M\cdot \left(\frac{\beta}{2N}\right)^{1-1/p},\nonumber
\end{align}
for some universal constant $\widehat C_1>0$, with probability at least $1-\beta$. 
Re-organizing the above in view of     $\epsilon\in(0,1]$, the specified choice of parameter $\lambda_0$,  {$\psi_M\geq 1$, $R^*\geq\max\{1,\mathcal D_{q'}^2\}$} and assumption that $\delta\leq 1/N$, we obtain  the desired  in Part (a).

{\it [To prove Part (b)]:} The proof argument is that for Part (a), except that the inequalities used starting from \eqref{to explain} vary due to the assumption of light tails (Assumption \ref{assumption f lipschitz v2}.(b)). More specifically, \eqref{to explain} should be modified into
\begin{align}
&\E_{\xi}
\left\{\left[f_{\lambda_0}(\xhb,\xi)-f_{\lambda_0}(\xbf^*,\xi)\vphantom{V^{V^V}}\right]\cdot  \mathds{1}(\xi\notin\mathcal E_t)\right\}  \nonumber
\\\leq&\mathcal D_{q'}\int_{t}^\infty  \Prob [M(\xi)\geq s]ds +\mathcal D_{q'}\int_{0}^{t} \Prob [M(\xi)\geq t]ds+\lambda_0\mathcal D_{q'}^2\cdot \Prob\left[ M(\xi)\geq t\right]\nonumber
\\\leq& 2\mathcal D_{q'}\int_{t}^{\infty}\exp\left(-  {s}/{\varphi}\right)ds +2\mathcal D_{q'}\int_{0}^{t}\exp\left(-  {t}/{\varphi}\right)ds + 2\exp\left(-  {t}/{\varphi}\right)\cdot \lambda_0 \mathcal D_{q'}^2\label{to explain exponential here}
\\\leq &(2\mathcal D_{q'}\varphi+\epsilon)  \cdot \exp(-t/\varphi) + 2\mathcal D_{q'}t\cdot \exp(-t/\varphi),\nonumber
\end{align}
where  \eqref{to explain exponential here} is the direct result of Assumption \ref{assumption f lipschitz v2}.(b) and the stipulation that $\lambda_0=0.5\epsilon/R^*\leq 0.5\epsilon/\mathcal D_{q'}^2$.
Then, the same argument as in obtaining \eqref{almost the result a} implies that 
$
F(\xhb)-F(\xbf^*)- {\epsilon}/{4}
\leq c\Delta_\gamma(t)+[2(\varphi +t) \cdot \mathcal D_{q'}+\epsilon]\cdot \exp(-t/\varphi)+\varkappa_\delta+\delta\cdot  \mathcal D_{q'},
$
 with probability at least $1-\frac{\beta}{2}-2N\exp(-t/\varphi)$. Here, we recall again that $\Delta_\gamma(\cdot)$ is as defined in \eqref{eq: define delta}. We may as well let $t:=\varphi\ln (4N/\beta)$ herein. 
 After re-organizing the above based on the assumption that $R^*\geq \mathcal D_{q'}^2$, we then immediately obtain that
 $\Prob[F(\xhb)-F(\xbf^*)\leq \epsilon]\geq 1-\beta$ holds when
$$N\geq \widehat C_2\cdot\frac{[\varphi\cdot\ln(4N/\beta)]^2}{\epsilon^2}\cdot \left( \frac{R^*\cdot \ln N\cdot \ln  ({4N/\beta})}{q'-1}+   {\mathcal D_{q'}^2 } \cdot \ln(2/\beta)\right) +\widehat C_2\cdot\mathcal D_{q'}\cdot\frac{\varphi \beta}{\epsilon}\cdot\ln{(4N/\beta)}$$ and $\delta\leq 1/N$,
for some universal constant $\widehat C_2>0$. This  can then be further simplified into the  desired for Part (b) by further noting  that $N\geq 3$ (thus $\ln N\geq 1$), $\varphi\geq 1$, $R^*\geq \max\{1,\mathcal D_{q'}^2\}$, $\beta\in(0,\,1)$, and $\epsilon\in(0,\,1]$. 

{\it [To show Part (c)]:} The proof is closely similar to that for Part (b), except that \eqref{to explain exponential here} should be modified into:
$\E_{\xi}
\left[\left.f_{\lambda_0}(\xhb,\xi)-f_{\lambda_0}(\xbf^*,\xi)\vphantom{V^{V^V}}\right\vert  \xi\notin\mathcal E_t\right]\cdot  \Prob [\xi\notin\mathcal E_t] 
=\mathcal D_{q'}\cdot\int_{t}^\infty  \Prob [M(\xi)\geq s]ds +\mathcal D_{q'}\cdot\int_{0}^t  \Prob [M(\xi)\geq t]ds+\lambda_0\mathcal D_{q'}^2\cdot \Prob\left[ M(\xi)\geq t\right]
\leq2\mathcal D_{q'}\cdot\left[\int_{t}^{\infty}\exp\left(- {s^2}/{\varphi^2}\right)ds + \int_{0}^{t}\exp\left(- {t^2}/{\varphi^2}\right)ds\right]+2\lambda_0 \mathcal D_{q'}^2\cdot \exp(-t^2/\varphi^2).$
Observe that, by some simple algebra,
$
\int_{t}^{\infty}\exp\left(-  {s^2}/{\varphi^2}\right)ds=0.5\cdot {\varphi\sqrt{\pi}}  \cdot \textbf{erfc}\left( {t}/{\varphi}\right),$
where $\textbf{erfc}(\cdot )$ is the complementary error function associated with a standard Gaussian distribution. By the well known inequality that $\textbf{erfc}\left(\frac{t}{\varphi}\right)\leq \exp(-t^2/\varphi^2)$ as per, e.g., \cite{chiani2003new}, we   can utilize the above in the same argument as in  Part (b) to obtain
$
F(\xhb)-F(\xbf^*)- {\epsilon}/{4}
\leq c\Delta_\gamma(t)+[(\varphi \sqrt{\pi} +2t)  \cdot\mathcal D_{q'}+\epsilon]\cdot \exp(-t^2/\varphi^2)+\varkappa_\delta+\delta\cdot  \mathcal D_{q'},$
with probability at least $1-\frac{\beta}{2}-2N \exp(-t^2/\varphi^2)$. If we let $t=\varphi\sqrt{\ln(4N/\beta)}$, we can obtain the desired result in Part (c) after re-organization 
and simplification based on the assumption that $\delta\leq 1/N$, $N\geq 3$ (such that $\ln N\geq 1$), $\varphi\geq 1$, $R^*\geq \max\{1,\mathcal D_{q'}^2\}$, $\beta\in(0,\,1)$, and $\epsilon\in(0,\,1]$.
\end{proof}

\smallskip 

The results in Proposition \ref{thm: large deviation} are implicit; the sample requirement is   that  $N$ should be greater than some poly-logarithmic function of $N$ itself. While  we would like to argue that these  are informative enough to assess the sample efficiency of SAA, we   present some more explicit complexity bounds in the theorem below.

\begin{theorem}\label{Most explicit result here}  
Let $\xhb$ be a $(\delta,q')$-approximate solution to SAA \eqref{Eq: SAA-ell2} and   $\Phi:= \frac{\varphi R^*e}{ (q'-1)\cdot \epsilon}$, where   $e$ is the base of natural logarithm. The following hold  for any given $\epsilon\in(0,1]$ and $\beta\in(0,1)$:
\begin{enumerate}
\item[(a).]
Under the same set of assumptions as in Proposition \ref{thm: large deviation}.(b),  for some universal constant $C_6>0$, 
\begin{align*}
\Prob[F(\xhb)-F(\xbf^*)\leq \epsilon]\geq 1-\beta,
 ~~~ \text{if}~~
N\geq  \frac{C_6\cdot R^*\cdot\varphi^2}{(q'-1)\cdot \epsilon^2} \cdot    \ln \left(\Phi\ln\frac{e}{\beta}\right)\cdot\ln^3\frac{\Phi}{\beta}.
\end{align*}
\item[(b).]  Under the same set of assumptions as in Proposition \ref{thm: large deviation}.(c),   for some universal constant $C_7>0$,
\begin{align*}
\Prob[F(\xhb)-F(\xbf^*)\leq \epsilon]\geq 1-\beta,~ \text{if}~
N\geq \frac{C_7\cdot R^*\cdot\varphi^2}{(q'-1)\cdot \epsilon^2} \cdot  \ln \left(\Phi\ln\frac{e}{\beta}\right)\cdot\ln^2\frac{ \Phi}{\beta}.
\end{align*}
\end{enumerate}
\end{theorem}
\begin{proof} 
This current theorem is to explicate the  sample requirement in Proposition \ref{thm: large deviation}. More specifically, the said proposition requires that the sample size $N$ should be greater than some poly-logarithmic function of $N$, leading to implicit sample complexity estimates. In achieving explication,  both parts of this   theorem are proven in two steps. In Step 1, we   find ``coarse'' sample complexity bounds in the following form: whenever $N\geq \widetilde{\mathrm{N}}$, for some $\widetilde{\mathrm{N}}$ independent from $N$,  SAA \eqref{Eq: SAA-ell2} provably achieves the desired efficacy. Then, in Step 2, we   show that the  sample complexity bounds in Proposition \ref{thm: large deviation} can be reduced to requiring $N$ to be greater than some poly-logarithmic functions of $\widetilde{\mathrm{N}}$ (substituting ``poly-logarithmic of $N$'' by ``poly-logarithmic of $\widetilde{\mathrm{N}}$'' in the original error bounds from the said proposition).  The claimed results then immediately follow.


{\it [To show Part (a)]:} Our proof has two steps as mentioned: Step 1   derives a coarse sample complexity bound; then, Step 2 refines this coarse bound to obtain the desired result.

{\bf Step 1.}  
To derive the said coarse bound, we make use of a function $h:(0,\infty)\rightarrow \R$ defined as
$h(z) := z - A(\ln z)^\alpha - B\ln z$ given $\alpha\in\{3,4\}$, $A>0$ and $B>0$.  
Let
$
N_0:= \widetilde C_1\cdot (A+B+e)\cdot \left[\ln\!\big(\widetilde C_1(A+B+e)\big)\right]^\alpha,
$
where $\widetilde C_1>0$ is a universal constant. In this step, we would like to  show that $h$ is   increasing  and that $h(N_0)\geq 0$, such that $h(N)\geq 0$ whenever $N\geq N_0$. We first show that $h$ is increasing on $[N_0,\infty)$. To that end, we would like to show that
$
h'(z)=1-\frac{\alpha A(\ln z)^{\alpha-1}+B}{z}\geq 0
$
for all $z\geq N_0$.
 Since $\widetilde C_1$ can be selected such that  $N_0\ge e^\alpha\ge e^{\alpha-1}$, the function
$h'$ is increasing on $[N_0,\infty)$, and so for any $z\ge N_0$, we have
$
1-\frac{\alpha A(\ln z)^{\alpha-1}+B}{z}
\ge
1-\frac{\alpha A(\ln N_0)^{\alpha-1}+B}{N_0}.
$
Also,  by the definition of $N_0$, because $\ln(\widetilde C_1(A+B+e))\ge 1$ and 
$\ln\big(\ln(\widetilde C_1(A+B+e))\big)\le \ln(\widetilde C_1(A+B+e))$, we have
$
\ln N_0
=
\ln\!\big(\widetilde C_1(A+B+e)\big)
+\alpha \ln\!\Big(\ln\!\big(\widetilde C_1(A+B+e)\big)\Big) \le
(\alpha+1)\ln\!\big(\widetilde C_1(A+B+e)\big).
$ Therefore,
\begin{align*}
\frac{\alpha A(\ln N_0)^{\alpha-1}+B}{N_0}
&\le
\frac{\alpha A(\alpha+1)^{\alpha-1}\left[\ln(\widetilde C_1(A+B+e))\right]^{\alpha-1}+B}
{\widetilde C_1(A+B+e)\left[\ln(\widetilde C_1(A+B+e))\right]^\alpha}
\le
\frac{\alpha(\alpha+1)^{\alpha-1}+1}{\widetilde C_1}
\le 1
\end{align*}
when the universal constant $\widetilde C_1$ is    large  enough.
Hence $h'(z)\ge 0$ for all $z\ge N_0$, so $h$ is increasing on $[N_0,\infty)$. Next, we verify $h(N_0)\ge 0$. Using the same bound on $\ln N_0$ as above as well as  $\ln(\widetilde C_1(A+B+e))\ge 1$ and $\alpha\ge 1$, we have
$
A(\ln N_0)^\alpha
\le
A(\alpha+1)^\alpha \left[\ln\!\big(\widetilde C_1(A+B+e)\big)\right]^\alpha\le
(\alpha+1)^\alpha (A+B+e)\left[\ln\!\big(\widetilde C_1(A+B+e)\big)\right]^\alpha$
and
$
B\ln N_0
\le
(\alpha+1)B\ln\!\big(\widetilde C_1(A+B+e)\big)
\le
(\alpha+1)(A+B+e)\left[\ln\!\big(\widetilde C_1(A+B+e)\big)\right]^\alpha.
$
Combining these two inequalities, we obtain
\[
A(\ln N_0)^\alpha + B\ln N_0
\le
[(\alpha+1)^\alpha+\alpha+1]\cdot\,(A+B+e)\left[\ln\!\big(\widetilde C_1(A+B+e)\big)\right]^\alpha
\]
Thus, by choosing $\widetilde C_1\ge [(\alpha+1)^\alpha+\alpha+1]$, we get
$h(N_0)=N_0-A(\ln N_0)^\alpha-B\ln N_0\ge 0.$
By the monotonicity shown earlier, if $N\ge N_0$, then
\[
h(N)\ge h(N_0)\ge 0
\quad\Longrightarrow\quad
N\ge A(\ln N)^\alpha+B\ln N.
\]

In the next, we use the properties of function $h$ proven above to derive the said  coarse bound on sample requirement. Note that the sample requirement that $N\geq C_4 R^*\frac{\varphi^2}{\epsilon^2}\cdot \frac{\ln N\cdot \ln^3(N/\beta)}{q'-1}$ in  Proposition \ref{thm: large deviation}.(b) is satisfied, if $N\geq 4 C_4 R^*\frac{\varphi^2}{\epsilon^2}\cdot \frac{\ln^4 N+\ln N\cdot \ln^3(1/\beta)}{q'-1}$. Letting  $\alpha=4$,  $A:= 4C_4 R^*\frac{\varphi^2}{(q'-1)\cdot\epsilon^2}$, and $B:= 4C_4 R^*\cdot\frac{\varphi^2}{\epsilon^2}\cdot  \frac{    \ln^3  ({ 1/\beta})}{q'-1}  $,  we have that there exists  a universal constant $\widetilde C_2>0$ such that
 \begin{align}N\geq   \frac{\widetilde C_2\cdot R^*\cdot \varphi^2}{(q'-1)\cdot \epsilon^2}\cdot  \ln^3(e/\beta) \cdot \left[\ln\left(\widetilde C_2\cdot \frac{R^*\cdot \varphi}{(q'-1)\cdot \epsilon}\cdot  \ln(e/\beta) \right) \right]^4\Longrightarrow N \geq 4 C_4 R^*\frac{\varphi^2}{\epsilon^2}\cdot \frac{\ln^4 N+\ln N\cdot \ln^3(1/\beta)}{q'-1},\label{sample bounds explicit here part a}
 \end{align} 
 leading to the said coarse sample bound for $
\Prob\left[F(\xhb)-F(\xbf^*)\leq  {\epsilon} \right]\geq 1-\beta$ in view of Proposition \ref{thm: large deviation}.(b).

{\bf Step 2.} Let $ \widetilde{\mathrm{N}}_1:=\frac{\widetilde C_2\cdot R^*\cdot \varphi^2}{(q'-1)\cdot \epsilon^2}\cdot  \ln^3(e/\beta) \cdot \left[\ln\left(\widetilde C_2\cdot \frac{R^*\cdot \varphi}{(q'-1)\cdot \epsilon}\cdot  \ln(e/\beta) \right) \right]^4$.  Below, we will show the desired result by considering two different cases when comparing the sample requirement in \eqref{sample bounds explicit here part a} and that in Proposition \ref{thm: large deviation}.(b): 

{\bf Case 1.} In the case of $\widetilde{\mathrm{N}}_1\geq  \widetilde{\mathrm{N}}_2:=C_4\cdot R^*\cdot\frac{\varphi^2\cdot\ln \widetilde{\mathrm{N}}_1\cdot \ln^3(\widetilde{\mathrm{N}}_1/\beta)}{\epsilon^2\cdot (q'-1)}$, we have that
\begin{align}
\widetilde{\mathrm{N}}_1\geq \widetilde{\mathrm{N}}_2   = C_4\cdot R^*\cdot\frac{\varphi^2\cdot\ln \widetilde{\mathrm{N}}_1\cdot \ln^3(\widetilde{\mathrm{N}}_1/\beta)}{\epsilon^2\cdot (q'-1)}. \label{case 1 here}
\end{align}
In the same case, there are two scenarios to study as the below:

 \noindent  {\bf Scenario 1.1.} Let $\widetilde{\mathrm{N}}_1\geq N$.  Whenever $N \geq\widetilde{\mathrm{N}}_2= C_4\cdot R^*\cdot\frac{\varphi^2\cdot\ln \widetilde{\mathrm{N}}_1\cdot \ln^3(\widetilde{\mathrm{N}}_1/\beta)}{\epsilon^2\cdot (q'-1)}$, (because $\beta\in(0,\,1)$ and $x\mapsto \ln x\cdot\ln^3 (x/\beta)$ is increasing for $x\geq 1$) it must hold  that $N \geq C_4\cdot R^*\cdot\frac{\varphi^2\cdot\ln N\cdot \ln^3(N/\beta)}{\epsilon^2\cdot (q'-1)}.$
Then, based on Proposition \ref{thm: large deviation}.(b),  we have $\Prob\left[F(\xhb)-F(\xbf^*)\leq \epsilon\vphantom{V^{V^V}_{V_V}}\right]\geq1-\beta$.

 \noindent {\bf Scenario 1.2.} Alternatively, let $N>\widetilde{\mathrm{N}}_1$. Then the outcome of Step 1 as in \eqref{sample bounds explicit here part a} ensures $\Prob\left[F(\xhb)-F(\xbf^*)\leq \epsilon\vphantom{V^{V^V}_{V_V}}\right]\geq1-\beta$.

{\bf Case 2.} Consider the case where $\widetilde{\mathrm{N}}_1 <  \widetilde{\mathrm{N}}_2$. Thus, whenever $N\geq \widetilde{\mathrm{N}}_2$, it must hold that $N\geq \widetilde{\mathrm{N}}_1$. Then, the result of Step 1 as in \eqref{sample bounds explicit here part a} ensures that 
$\Prob\left[F(\xhb)-F(\xbf^*)\leq \epsilon\vphantom{V^{V^V}_{V_V}}\right]\geq1-\beta.$

Combining both Cases 1 and 2 above, we know that $N\geq \widetilde {\mathrm{N}}_2$ is always adequate to satisfy the sample requirement in Proposition \ref{thm: large deviation}.(b). In view of the fact  that  $R^*\geq 1$, $\varphi\geq 1$,  $\epsilon\in(0,\,1]$, and thus $ \widetilde{\mathrm{N}}_1\leq \left(\frac{\widetilde C_2\cdot R^*}{(q'-1)}\right)^{5} \cdot\frac{\varphi^6}{\epsilon^6}   \ln^7(e/\beta)\leq \left(\frac{\widetilde C_2\cdot R^*}{(q'-1)}\right)^{5} \cdot\frac{\varphi^6e^7}{\epsilon^6\beta^7}$. Consequently, invoking the definition of $\widetilde{\mathrm{N}}_2$ as in \eqref{case 1 here}, we obtain $\widetilde{\mathrm{N}}_2 \leq \frac{C_4\cdot R^*\cdot\varphi^2}{\epsilon^2\cdot (q'-1)}\cdot\ln\left[ \left(\frac{\widetilde C_2\cdot R^*}{(q'-1)}\right)^{5} \cdot\frac{\varphi^6}{\epsilon^6}   \ln^7(e/\beta) \right]\cdot \ln^3\left[\left(\frac{\widetilde C_2\cdot R^*}{(q'-1)}\right)^{5} \cdot\frac{\varphi^6e^7}{\epsilon^6\beta^8}\right]$.  Therefore, after   simplification, we have that the satisfaction of the sample requirement in Theorem \ref{Most explicit result here}.(a) implies that $N\geq \widetilde {\mathrm{N}}_2$, as desired in Part (a) of this theorem.
 
\smallskip

{\it [To show Part (b)]:} Again, our proofs are in two steps: Step 1 provides a coarse sample complexity bound;  Step 2  then follows the same argument as the second step of Part (a) to show the claimed result.

{\bf Step 1.} This step derives a ``coarse bound'' for the sample complexity.  To that end,  we may invoke  Step 1 of the proof for Part (a). Note that the sample requirement in  Proposition \ref{thm: large deviation}.(c) is satisfied, if $N\geq 2 C_5 R^*\frac{\varphi^2}{\epsilon^2}\cdot \frac{\ln^3 N+\ln N\cdot \ln^2(1/\beta)}{q'-1}$. Letting  $\alpha=3$,  $A:= 2C_5 R^*\frac{\varphi^2}{(q'-1)\cdot\epsilon^2}$, and $B:= 2C_5 R^*\cdot\frac{\varphi^2}{\epsilon^2}\cdot  \frac{    \ln^2  ({ 1/\beta})}{q'-1}  $  in defining function $h$ in  Step 1 of the proof for Part (a) and construct that $
N_0:= \widetilde C_3\cdot (A+B+e)\cdot \left[\ln\!\big(\widetilde C_3(A+B+e)\big)\right]^\alpha
$ for some universal constant $\widetilde C_3>0$,  we can follow the same argument as in Step 1 of Part (a) to obtain that, for some universal constant $\widetilde C_4>0$,
 \begin{align}N\geq   \frac{\widetilde C_4\cdot R^*\cdot \varphi^2}{(q'-1)\cdot \epsilon^2}\cdot  \ln^2(e/\beta) \cdot \left[\ln\left( \frac{ \widetilde C_4 R^*\cdot \varphi}{(q'-1)\cdot \epsilon}\cdot  \ln(e/\beta) \right) \right]^3\Longrightarrow N \geq 2 C_5 R^*\frac{\varphi^2}{\epsilon^2}\cdot \frac{\ln^3 N+\ln N\cdot \ln^2(1/\beta)}{q'-1},\label{sample bounds explicit here}
 \end{align} 
 leading to the second coarse sample bound for $
\Prob\left[F(\xhb)-F(\xbf^*)\leq  {\epsilon} \right]\geq 1-\beta$ in view of Proposition \ref{thm: large deviation}.(c).

{\bf Step 2.} We may now invoke the same argument as in Step 2 of the proof for Part (a), except that now we let $\widetilde{\mathrm{N}}_1:=  \frac{\widetilde C_4\cdot R^*\cdot \varphi^2}{(q'-1)\cdot \epsilon^2}\cdot  \ln^2(e/\beta) \cdot \left[\ln\left(\frac{\widetilde C_4 R^*\cdot \varphi}{(q'-1)\cdot \epsilon}\cdot  \ln(e/\beta) \right) \right]^3$ and $\widetilde{\mathrm{N}}_2:=C_5\cdot R^*\cdot \frac{\varphi^2}{\epsilon^2}\cdot   \frac{\ln \widetilde{\mathrm{N}}_1\cdot \ln^2  ({\widetilde{\mathrm{N}}_1/\beta})}{q'-1}$ therein. Also different from Part (a) of this proof in this analysis, we  invoke   Proposition \ref{thm: large deviation}.(c) instead of  Proposition \ref{thm: large deviation}.(b).  Then, we have that $N\geq \widetilde{\mathrm{N}}_2$ implies the satisfaction of sample requirement in Proposition \ref{thm: large deviation}.(c).  Since (in view of $R^*\geq 1$, $\varphi\geq 1$,   $\epsilon\in(0,\,1]$) we have  $ \widetilde{\mathrm{N}}_1\leq \left(\frac{\widetilde C_4\cdot R^*}{(q'-1)}\right)^{4} \cdot\frac{\varphi^5}{\epsilon^5}   \ln^5(e/\beta)\leq \left(\frac{\widetilde C_4\cdot R^*}{(q'-1)}\right)^{4} \cdot\frac{\varphi^5e^5}{\epsilon^5\beta^5}$,  the stipulation of $N\geq \widetilde{\mathrm{N}}_2$ is then implied by $N\geq  \frac{C_5\cdot R^*\cdot\varphi^2}{\epsilon^2\cdot (q'-1)}\cdot    {\ln \left[\left(\frac{\widetilde C_4\cdot R^*}{(q'-1)}\right)^{4} \cdot\frac{\varphi^5}{\epsilon^5}   \ln^5(e/\beta)\right]\cdot \ln^2\left[\left(\frac{\widetilde C_4\cdot R^*}{(q'-1)}\right)^{4} \cdot\frac{\varphi^5e^5}{\epsilon^5\beta^6}\right]} $, which then leads to the desired  result  for Part (b) of Theorem \ref{Most explicit result here} after simplification.
\end{proof}

\smallskip

Both parts of  Theorem \ref{Most explicit result here} provide  sample complexity bounds  as  further explications of   Proposition \ref{thm: large deviation}.  In particular, if we note that $R^*$ can be      comparable to  $\mathcal D_{q'}^2$, Part (b) of this theorem  confirms the promised complexity in \eqref{metric entropy free bounds intro}.    Again, the sample complexity  rates from Theorem \ref{Most explicit result here} are  completely free from any metric entropy terms. As a result, under our assumption of   light-tailed underlying distributions, the new  bounds are more appealing in terms of the dependence on dimensionality $d$ as compared to the  benchmark   \eqref{reduced rate}. In addition,  barring some other (poly-)logarithmic terms, the said improvement w.r.t.   $d$    is achieved at some arguably small   compromise: the logarithmic rate on $1/\beta$ in \eqref{reduced rate} has now become poly-logarithmic.   In further   comparison with the results of \cite{bugg2021logarithmic} for high-dimensional SP problems under the structural assumption   that the feasible region is representable by a  simplex, our bounds seem to entail comparable insensitivity to $d$ without any structural assumption alike. 

\begin{remark}\label{comparison results to be added} 




In the heavy-tailed case under the conditions including (i) convexity,  (ii) Assumption \ref{assumption f lipschitz}.(a), and (iii) the boundedness of the $p'$th central moment of $[M(\xi)]^2$, the state-of-the-art benchmark  by  \cite{oliveira2023sample} is as the following: For a given $p'\geq 2$,
\begin{multline}
\Prob[F(\xhb)-F(\xbf^*)\leq \epsilon]\geq 1-\beta,
\\~~\text{if }N\geq
 O\left(\frac{\mathbf M_2\cdot   \left(\left(\gamma(\X^{*,\epsilon})\vphantom{V^{V^V}}\right)^2 + (\mathcal D^{*,\epsilon})^{2}\cdot\ln \frac{1}{\beta}\right)}{\epsilon^2} +  p'\cdot \left( \frac{\widetilde{\mathbf M}_{p'}}{\mathbf M_2 }+\frac{\widetilde \upsilon_{\xbf^*,p'}}{\upsilon_{\xbf^*}}\right)\cdot {\beta^{-2/p'}} \right).
 \label{Eq: history bound further specialized pth moment bounded}
\end{multline}
where the notations follow the same as those for \eqref{Eq: history bound further specialized}, except that $\widetilde{\mathbf M}_{p'}$ and $\widetilde \upsilon_{\xbf^*,p'}$ are the $p'$th central moments of $[M(\xi)]^2$ and $(f(\xbf^*,\xi)-F(\xbf^*))^2$, respectively.  Note that the case with a fixed $p'\geq 2$ in \eqref{Eq: history bound further specialized pth moment bounded} corresponds to our results with $p:=2p'\geq 4$ in  Assumption \ref{assumption f lipschitz}.(b).
Under comparable assumptions, we first observe that 
Part (a) of Proposition \ref{thm: large deviation} is, again, more advantageous in the dependence on dimensionality $d$ due to the avoidance of metric entropy terms such as $\left(\gamma(\X^{*,\epsilon})\right)^2$, which is polynomial in $d$ in general. Meanwhile, the comparison in terms of the dependence on $\epsilon$ is more subtle, as discussed in the following: 

It is worth noting that, when $p=2p'\geq 4$ is relatively small, the benchmark result in \eqref{Eq: history bound further specialized pth moment bounded} presents more appealing dependence on $\epsilon$ --- with a rate of $O(\epsilon^{-2})$ in \eqref{Eq: history bound further specialized pth moment bounded} versus  $O(\epsilon^{-2-\frac{4}{p-2}})=O(\epsilon^{-2-\frac{2}{p'-1}})$ in Proposition \ref{thm: large deviation}.(a). Yet, when $p
\geq c\ln \frac{1}{\epsilon}+2$ for some universal constant $c>0$, that is,  $p$ is larger than some relatively small threshold, the said two rates on $\epsilon$ become comparable. We leave closing the remaining difference between those two results for   $p\in [4,\,c\ln\frac{1}{\epsilon}+2)$ to future research.
\end{remark}

{\color{black}
\begin{remark}\label{Compare with generic chaining}It is worth noting that the generic chaining-based result in \eqref{Eq: history bound further specialized pth moment bounded} as per \cite{oliveira2023sample} also applies to light-tailed scenarios; namely, when there exists some scalar $U$ such that  $\widetilde{\mathbf M}_{p'}\leq U$ and $\widetilde \upsilon_{\xbf^*,p'}\leq U$   for all $p'\geq2$. In such a case, one may optimize $p'$ to obtain the following sample complexity for light-tailed SP problems given $\beta\in(0,e^{-1}]$: It holds that $\Prob[F(\xhb)-F(\xbf^*)\leq \epsilon]\geq 1-\beta$, if 
\begin{align}
N\geq
 O\left(\frac{\mathbf M_2\cdot   \left[\left(\gamma(\X^{*,\epsilon})\vphantom{V^{V^V}}\right)^2 + (\mathcal D^{*,\epsilon})^{2}\cdot\ln \frac{1}{\beta}\right]}{\epsilon^2} +    \left( \frac{1}{\mathbf M_2 }+\frac{1}{\upsilon_{\xbf^*}}\right)\cdot U\cdot  \ln\frac{1}{\beta} \right).
 \label{Eq: history bound further specialized pth moment bounded v2}
\end{align}
Generic chaining can yield sharper bounds than classical covering-number arguments \citep{vershynin2018high}. The  advantage is especially clear when the feasible set is a simplex (or admits a simplex representation). In those cases, the metric entropy term grows only logarithmically with the ambient dimension \(d\). However, as noted in Remark~\ref{remark: compare to OT p=2}, the generic-chaining complexity \(\gamma(\X^{*,\epsilon})\) typically scales as \(O\left(\sqrt{d}\,\mathcal D^{*,\epsilon}\right)\). In contrast, Theorem~\ref{Most explicit result here}(b) has a more favorable dependence on \(d\) because it avoids metric entropy terms altogether.
\end{remark}
}
{\color{black}
\begin{remark}\label{remark varphi} In general, $\varphi^2$ in Theorem~\ref{Most explicit result here} depends on the dimension $d$. Appendix~\ref{sec: lower bound} gives two constructions showing that $\varphi^2$ can scale as $O(d)$  and $O(\ln d)$, respectively.
\end{remark}}

 Under  comparable sub-Gaussian assumptions as Part (b) of Theorem \ref{Most explicit result here}, large deviations bounds for   canonical SMD \citep[e.g., Eq (2.65) of][]{nemirovski2009robust} are available as the below, after some straightforward conversion of notations:
\begin{align}
O\left(\frac{\mathcal D_{q}^2\varphi^2}{\epsilon^{2}}\cdot \ln^2\frac{1}{\beta}\right).\label{SA result}
\end{align}
To facilitate  comparison with \eqref{SA result}, we have the following corollary of  Theorem \ref{Most explicit result here}.(b). Here, we recall the notation that $G_i$ is the $i$th component of $G$ and $e$ is the base of natural logarithm. Here,  we use the fact that, for every $\xi\in\Xi$,  the subdifferential $\partial_{\xbf} f(\xbf,\xi)\neq \emptyset$ for all $\xbf\in \X$ if $f(\cdot,\xi)$ is convex on $B$. We also denote by $\partial_{x_i} f(\xbf,\xi)$ the subdifferential w.r.t.\! the $i$th component of $\xbf$.

{\color{black}\begin{corollary}\label{most explicit}
Let $d\geq 8$, $q'\in(1,2]$, $\epsilon\in(0,1]$, and $\beta\in(0,1)$. Suppose that  $N\geq 3$,    $f(\cdot,\xi)$ is convex on $B$ for all $\xi\in\Xi$, and      $\X$ admits a bounded $q'$-norm diameter  $\mathcal D_{q'}\geq 1$.  Consider   a $(\delta,q')$-approximate solution  $\xhb$ to  SAA    \eqref{Eq: SAA-ell2} with $R^*=\max\{1,\mathcal D_{q'}^2\}$,  $\lambda_0=0.5 {\epsilon}/{R^*}$, and $\delta\leq N^{-1}$. 
\begin{description}
\item[(a).] If there exists some $\varphi_{f,p}>0$,  $p\in[2,\,q'/(q'-1)]$, and a Borel measurable function $\widetilde M_p:\Xi\rightarrow\R_+$ such that (i) for all $\xbf\in\X$ and every $\xi\in\Xi$, it holds that $ \Vert\gbf\Vert_{p}\leq \widetilde M_{p}(\xi)$ for some $\gbf\in\partial_{\xbf} f(\xbf,\xi)$ and  (ii) for all $t\geq 0$, $\Prob\Big[\widetilde M_{p}(\xi)\leq t\Big]\geq 1-2\exp(-t^2/\varphi_{f,p}^2)$, then
\begin{align}
\Prob[F(\xhb)-F(\xbf^*)\leq \epsilon]\geq 1-\beta,~ \text{if}~
N\geq \frac{C_8\cdot R^*\cdot\varphi_{f,p}^2}{(q'-1)\cdot \epsilon^2} \cdot  \ln \left(\Phi_1'\ln\frac{e}{\beta}\right)\cdot\ln^2\frac{\Phi_1'}{\beta} ,\label{SMD comparable light-tailed results in corollary}
\end{align}
where $\Phi_1':= \frac{e R^*}{ (q'-1)\cdot \epsilon}\cdot \varphi_{f,p}$ and  $C_8>0$ is some universal constant.
\item[(b).]
Let  $q'=1+\big(\ln d-1\big)^{-1}$. Suppose that   $\sup\{\Vert \xbf_1-\xbf_2\Vert_1:\,\xbf_1,\xbf_2\in\X\}\leq \mathcal D_1$    for some $\mathcal D_{1}\geq \max\{1,\,\mathcal D_{q'}\}$.  If  there exists some $\varphi_{f,\infty}>0$ and a Borel measurable function $\widetilde M_i:\Xi\rightarrow\R_+$ such that (i)  for all $\xbf\in\X$ and every $\xi\in\Xi$, it holds that $ \vert g_i\vert \leq \widetilde M_{i}(\xi)$ for some $g_i\in\partial_{x_i} f(\xbf,\xi)$ for every $i=1,...,d$, and (ii) $\Prob\Big[\widetilde M_i(\xi)\leq t\Big]\geq 1-2\exp(-t^2/\varphi_{f,\infty}^2)$ for every $i=1,...,d$ and all $t\geq 0$, then it holds that
\begin{align*}
\Prob[F(\xhb)-F(\xbf^*)\leq \epsilon]\geq 1-\beta,~ \text{if}~
N\geq \frac{C_9\cdot \max\{1,\mathcal D_{1}^2\}\cdot \varphi_{f,\infty}^2\cdot\ln^2 d}{ \epsilon^2} \cdot \ln \left(\Phi_2'\ln\frac{e}{\beta}\right)\cdot\ln^2\frac{\Phi_2'}{\beta},
\end{align*}
where $\Phi_2':= \frac{e \max\{1,\mathcal D_{1}\}}{   \epsilon}\cdot \varphi_{f,\infty}\cdot \ln d$ and $C_9>0$ is   some universal constant.
\end{description}
\end{corollary}
\begin{proof}Proof is postponed to Appendix \ref{proof of corollary 1}.\end{proof}

\bigskip
Up to   some   (poly-)logarithmic terms, the complexity bound  in Part (a) of this corollary  implies a sample complexity bound of SAA that is almost identical to \eqref{SA result}. 
 This is --- in addition to what has been discussed as in Remarks \ref{remark: Comparison with SMD strongly convex} and \ref{remark: Comparison with SMD convex} --- yet another theoretical evidence that the solution accuracy of   SAA is theoretically ensured to be comparable to   canonical SMD under similar conditions. \textcolor{black}{(See Table \ref{table summary results compare with SMD} for a summary of comparisons with SMD).} Part (b) of the corollary identifies a case where the sample complexity bound is almost dimension-free, up to some $O(\ln^2 d)$ term.  
}


\subsection{Sample complexity beyond Lipschitz conditions}\label{sec: non-Lipschitzian}
This section  is  focused on non-Lipschitzian scenarios where there is not necessarily a known upper bound on the (global) Lipschitz constants for either $F$ or (any part of) its (sub)gradient. We start with our assumptions, where we recall  that $G:\X\times\Xi\rightarrow\R^d$ is a deterministic and universally measurable function such that  $G(\xbf,\xi)\in \partial_\xbf f(\xbf,\xi)$ for all $\xbf\in\X$ and  almost every  $\xi\in\Xi$.


 
\begin{assumption}\label{SC condition constant}
 For  given $q\in[1,2]$ and $p\in[2,\infty):\,p\leq  \varrho$, where $\varrho$ is the dual exponent of $q$, there exists an optimal solution $\xbf^*$ to \eqref{Eq: SP problem statement} such that the following two conditions hold:
\begin{itemize} 
\item[(a)]
  There exists some  $\mu>0$ such that, for every $\xbf\in\X$, and  almost every $\xi\in\Xi$,
\begin{align}
f(\xbf,\xi)-f(\xbf^*,\xi)\geq \langle G(\xbf^*,\xi),\,\xbf-\xbf^*\rangle+\frac{\mu}{2}\cdot \Vert\xbf-\xbf^*\Vert^2_q.\nonumber
\end{align}
\item[(b)]  Function $F$ is differentiable at $\xbf^*$ with $\E[G(\xbf^*,\xi)]=\nabla F(\xbf^*)$  and
$\big\Vert G(\xbf^*,\xi)-\E[G(\xbf^*,\xi)]\big\Vert_{L^p}\leq \psi_{p}$  for some $\psi_p\geq 1$.
\end{itemize}
\end{assumption}
\smallskip

\begin{remark}\label{remark: SC condition weaker}
Assumption \ref{SC condition constant}.(a)    non-trivially relaxes Assumption \ref{SC condition constant all}, while the latter  is the same strong convexity assumption as in some SAA literature, e.g., by \cite{milz2023sample}.   If we change $\xbf^*$ into every $\ybf\in\X$,  then Assumption \ref{SC condition constant}.(a) is  reduced to Assumption \ref{SC condition constant all} as a special case. (Additional discussions on the applicability of Assumption \ref{SC condition constant all} is   in Remark \ref{remark: SC condition}). Our assumption on the underlying randomness in Assumption \ref{SC condition constant}.(b) is also  flexible, as it only imposes   the $p$th central moment of the subgradient   $G(\xbf^*,\xi)$ --- merely at one optimal solution $\xbf^*$ --- to be finite.  Also under this assumption  as well as Assumption \ref{assumption: Variance suboptimality} below,  it is easy to verify that $\E[G(\xbf^*,\xi)]\in\partial F(\xbf^*)$.
\end{remark}

For part of our results in this section, we also consider a stronger version of Assumption \ref{SC condition constant}.(b) as the below, where we recall the definition of   the set of $\epsilon$-suboptimal solutions $\X^{*,\epsilon}$ as in \eqref{suboptimal solution set}:

\begin{assumption}\label{assumption: Variance suboptimality}
 For  some given  $p\in[2,\,\infty)$, $\epsilon>0$,  $\psi_p\geq 1$, and for all $\xbf^{*,\epsilon}\in \X^{*,\epsilon}$, it holds that  $F$ is differentiable at $\xbf^{*,\epsilon}$ with $\E[G (\xbf^{*,\epsilon},\xi)]=\nabla F(\xbf^{*,\epsilon})$ and
$\left\Vert G (\xbf^{*,\epsilon},\xi)-\E[G (\xbf^{*,\epsilon},\xi)]\right\Vert_{L^p}\leq \psi_{p}$.
\end{assumption}




 \noindent    Intuitively, this assumption means that the underlying randomness admits a bounded $p$th central moment for  all the $\epsilon$-suboptimal solutions.
 

 

We are now ready to present our results.

\begin{theorem}\label{thm: first main theorem} Let $N\geq 3$, $q\in[1,2]$, and $p\in[2,\infty):\,p\leq  \varrho$, where $\varrho$ is the dual exponent of $q$.  Suppose that    Assumption  \ref{SC condition constant} holds.
 For  a $(\delta,q)$-approximate   solution  $\xhb$ to   SAA  \eqref{Eq: SAA} with $\delta\leq \frac{1}{N}$, there exists some universal constant $C_{10}>0$ such that
\begin{align}
\E\left[ \Vert\xbf^*-\xhb\Vert_q^2\right]\leq &\, \vartheta,~~\text{if }N\geq \frac{C_{10} p }{\mu^2}\cdot \frac{\psi_{p}^2}{\vartheta};~~~\text{and}\label{solution bound first}
\\
\Prob\left[\Vert \xbf^*-\xhb\Vert_q^2\leq \vartheta\right]\geq&\, 1-\beta,~~\text{if}~~N\geq \frac{C_{10} p}{\mu^2 }\cdot\frac{\psi_{p}^2}{\vartheta}\cdot \beta^{-\frac{2}{p}},\label{second part thm 1}
\end{align}
for any given  $\vartheta>0$ and $\beta\in(0,1)$.

\end{theorem}
\begin{proof} Introduce a short-hand that $\Upsilon_j(\xbf):=G(\xbf,\xi_j)-\E[G(\xbf,\xi_j)]$. To show  \eqref{solution bound first}, we invoke  the definition of solution $\xhb$ in solving SAA \eqref{Eq: SAA} and Assumption \ref{SC condition constant}.(a) w.r.t. the $q$-norm to obtain
\begin{align}
&\delta\Vert\xbf^*-\xhb\Vert_q\geq F_N(\xhb)- F_N(\xbf^*)
\geq \left\langle N^{-1}\sum_{j=1}^N G(\xbf^*,\xi_j),\,\xhb-\xbf^* \right\rangle+\frac{\mu}{2}\cdot \Vert\xbf^*-\xhb\Vert_{q}^2,~~~a.s.\nonumber
\\=&\left\langle N^{-1}\sum_{j=1}^N \Upsilon_j(\xbf^*),\,\xhb-\xbf^* \right\rangle+\left\langle N^{-1}\sum_{j=1}^N \E[G(\xbf^*,\xi_j)],\,\xhb-\xbf^* \right\rangle+\frac{\mu}{2}\cdot \Vert\xbf^*-\xhb\Vert_{q}^2.\nonumber
\end{align}
Under Assumptions  \ref{SC condition constant}, observe that $F(\xbf)-F(\xbf^*)=\E[f(\xbf,\xi)-f(\xbf^*,\xi)]\geq  \langle \E[G(\xbf^*,\xi)],\,\xbf-\xbf^*\rangle= \langle \nabla F(\xbf^*),\,\xbf-\xbf^*\rangle $ for all $\xbf\in \X$.  
By the fact that $\xbf^*$ minimizes $F$ on $\X$,  we have   $\left\langle N^{-1}\sum_{j=1}^N \E[G(\xbf^*,\xi_j)],\,\xhb-\xbf^* \right\rangle\geq 0$.  Further in view of $\delta \Vert\xhb-\xbf^*\Vert_q\leq \frac{\delta^2}{\mu}+\frac{\mu}{4}\Vert\xhb-\xbf^*\Vert_q^2$, we   may continue from the above to obtain:
\begin{align}
\frac{\mu}{4}\cdot \Vert\xbf^*-\xhb\Vert_{q}^2\leq  -\left\langle N^{-1}\sum_{j=1}^N \Upsilon_j(\xbf^*),\,\xhb-\xbf^* \right\rangle +\frac{\delta^2}{\mu},~~~~a.s.\label{to obtain from here tail bound}
\end{align}
Taking expectations on both sides, we have
\begin{align}
\frac{\mu}{4}\cdot \E[\Vert\xbf^*-\xhb\Vert_{q}^2]\leq  -\E\left[\left\langle N^{-1}\sum_{j=1}^N \Upsilon_j(\xbf^*),\,\xhb-\xbf^* \right\rangle\right]+\frac{\delta^2}{\mu}.\label{first to use temporary}
\end{align}
Now we observe the following general relationships  for any pair of $d$-dimensional random vectors $\ubf=(u_i)\in\R^d$ and $\vbf\in\R^d$. Let $\eta$ be a scalar such that $\eta^{-1}+p^{-1}=1, \eta\in [q,\,2]$ and recall that $p\geq 2,\,q\in[1,\,2]$. Then,  by H\"older's and Young's inequalities, for any given scalar $b>0$,  
\begin{align}\Big\vert\E[\langle \ubf,\vbf\rangle]\Big\vert\leq~& \Vert \ubf\Vert_{L^\eta}\cdot \Vert \vbf\Vert_{L^p}\leq  \frac{{b}}{2} \Vert \ubf\Vert_{L^\eta}^{{2}}+\frac{1}{2{b} } \Vert \vbf\Vert^2_{L^p} =\frac{{b}}{2}\left(\sum_{i=1}^d\E\left[\vert u_i\vert^\eta\right]\right)^{2/\eta}+\frac{1}{2b }\Vert \vbf\Vert^2_{L^p}\nonumber
\\\stackrel{\eta\leq 2}{\leq} \,&   \frac{b}{2} \E\left[\left(\sum_{i=1}^d\vert u_i\vert^\eta\right)^{2/\eta}\right]+\frac{1}{2b } \Vert \vbf\Vert^2_{L^p}=\frac{b}{2} \E\left[\Vert \ubf\Vert_\eta^2\right]+\frac{1}{2b } \Vert \vbf\Vert^2_{L^p}\label{Actually very useful previous}
 \stackrel{q\leq\eta}{\leq}\,  \frac{b}{2} \E\left[\Vert \ubf\Vert_q^2\right]+\frac{1}{2b } \Vert \vbf\Vert^2_{L^p}  
\end{align}
 where the  first inequality in \eqref{Actually very useful previous} is due to the fact that $(\cdot )^{2/\eta}$ is convex as  $2/\eta\geq 1$. Combining this observation with \eqref{first to use temporary}  immediately leads to:
 \begin{align}
\frac{\mu}{4}\cdot \E[\Vert\xbf^*-\xhb\Vert_{q}^2] \,{\leq} &\,\E\left[\frac{\mu}{8}\Vert \xhb-\xbf^*\Vert_{q}^2\right]+ \frac{2}{\mu}\E\left[\left\Vert N^{-1}\sum_{j=1}^N \Upsilon_j(\xbf^*) \right\Vert_{L^p}^2\right]+ \frac{\delta^2}{\mu}\nonumber
\\\Longrightarrow&\,  \E[\Vert\xbf^*-\xhb\Vert_{q}^2]\leq   \frac{16}{\mu^2}\E\left[\left\Vert N^{-1}\sum_{j=1}^N\Upsilon_j(\xbf^*) \right\Vert_{L^p}^2\right]+\frac{8\delta^2}{\mu^2}.\label{to continue from here 1}
\end{align}
By Lemma \ref{useful lemma 2} in Section \ref{sec: useful lemma 2}, since ${2}\leq p<\infty$, if we denote by $\Upsilon_{j,i}$ the $i$th component of $\Upsilon_j$,
\begin{align}
&\left\Vert N^{-1}\sum_{j=1}^N \Upsilon_j(\xbf^*) \right\Vert_{L^p}^2 = 
\left(\sum_{i=1}^d\left\Vert N^{-1}\sum_{j=1}^N \Upsilon_{j,i}(\xbf^*)\right\Vert_{L^p}^{p}\vphantom{V^V_{V_{V_V}}}\right)^{2/p}\nonumber
\leq  \left( \left(\tilde C \sqrt{p\cdot N^{-1}}\right)^{p}\sum_{i=1}^d\left[\Vert \Upsilon_{1,i}(\xbf^*)\Vert_{L^p}\vphantom{V_{V_{V_V}}}\right]^p\right)^{2/p}
\leq \frac{\tilde C^2\cdot p}{N} \psi_p^2.\nonumber
\end{align}
for some universal constant $\tilde C>0$.  
We may then continue from \eqref{to continue from here 1} to obtain
$
\E[\Vert\xbf^*-\xhb\Vert_{q}^2]\leq   \frac{C  p}{\mu^2N}\psi_p^2,$
for some universal constant $C>0$, which  immediately leads to the desired result in \eqref{solution bound first} (and thus the first part of the theorem).

To show \eqref{second part thm 1}, we   may continue from \eqref{to obtain from here tail bound}. By invoking   H\"older's and Young's inequalities, we  obtain:
\begin{align}
  \Vert\xbf^*-\xhb\Vert_{q}^2\leq  \frac{16}{\mu^2}\left\Vert N^{-1}\sum_{j=1}^N\Upsilon_j(\xbf^*)\right\Vert^2_p+\frac{8\delta^2}{\mu^2},~~~~a.s.\label{to obtain from here tail bound second result}
\end{align}
Now, we can invoke  Lemma \ref{useful lemma 2 tail bound} and Assumption \ref{SC condition constant}.(b)  w.r.t. the $p$-norm to obtain, for any $t>0$, it holds that 
$\Prob[\Vert N^{-1}\sum_{j=1}^N \Upsilon_{j}(\xbf^*)\Vert_p^2\geq t]\leq \left(\widetilde C\psi_p\sqrt{\frac{p}{Nt}}\right)^{p}$
for some universal constant $\widetilde C>0$.  This combined with \eqref{to obtain from here tail bound second result} and $\delta\leq \frac{1}{N}\leq \frac{\mu\sqrt{\vartheta}}{3}$ (where the second inequality holds   given $N\geq 3$ and the satisfaction of the sample requirement in either \eqref{solution bound first} or \eqref{second part thm 1}; if $\mu\sqrt{\vartheta}\geq 1$ then $\mu\sqrt{\vartheta}\geq 1\geq  3/N$, and $\mu\sqrt{\vartheta}\leq 1$ then either one of \eqref{solution bound first} and \eqref{second part thm 1} alone leads to $\frac{1}{N}\leq \frac{\mu\sqrt{\vartheta}}{3}$) implies that
$
\Prob\left[\Vert\xbf^*-\xhb\Vert_q^2\leq  {\frac{16t}{\mu^2}}+\frac{8}{9}\cdot \vartheta \right]\geq 1-\left(\widetilde C\psi_p\sqrt{\frac{p}{Nt}}\right)^{p},$
which evidently leads to the desired result in \eqref{second part thm 1} after some simple re-organization.
\end{proof}

\smallskip


\begin{remark}\label{remark: no sub gap}It is worth mentioning that  our result here only ensures a controlled $q$-norm distance from the SAA solution to a genuine optimal solution $\xbf^*$ --- at a sample complexity rate that is free from metric entropy terms again.  While it is  unknown whether suboptimality gaps can also be controlled from our results, we argue that the said distance from $\xbf^*$ is a reasonable metric of optimization performance and has been considered in related literature \citep[e.g., by][]{milz2023sample,rakhlin2011making}.
\end{remark}

{\color{black}
\begin{remark}\label{remark: dependence on dimensions} Our sample bounds in  Theorem \ref{thm: first main theorem} depend on $\psi_p^2$, which grows with $d$ in general. More specifically, suppose that, for some $p\geq 2$ and $\phi_p\geq 0$, the component-wise $p$th  central moment of $G(\xbf,\xi)$ is bounded by $\phi_p^p$ everywhere; namely, for $p\geq 2$ and    $\phi_p\geq 0$,  it holds that $\left\Vert G_i(\xbf,\xi)-\E[G_i(\xbf,\xi)]\right\Vert_{L^p}\leq \phi_p$ for all $\xbf\in\X$ and every $i=1,...,d$, where $G_i(\xbf,\xi)$ is  the $i$th entry of $G(\xbf,\xi)$.  Then, it is straightforward to obtain from \eqref{Eq: bounding dependence on d}   that $\psi_p^2\leq d^{2/p}\phi_p^2 $, which becomes dimension-independence  when it is admissible to let $p\geq c\ln d$ for some constant $c>0$.
\end{remark}
}

{\label{comment artifact 2}\color{black}When $p=2$, the complexity rate with the significance level $\beta$ becomes $O(1/\beta)$. As is shown in Proposition \ref{lower bound}.(iii)  of Appendix \ref{beta subsection lower bound}, the said dependence on $\beta$ is intrinsic to the SAA, rather than a proof artifact. 
}

Based on Theorem \ref{thm: first main theorem}, one may further obtain the results for   SAA \eqref{Eq: SAA-ell2} in  solving a convex SP problem  as below.

\begin{theorem}\label{thm: first main theorem convex}
Let $N\geq 3$, $\epsilon>0$, and $\lambda_0:=\frac{\epsilon}{2R^*}$ for an arbitrary choice of $R^*\geq \max\{1,\, \frac{1}{2}\cdot V_{q'}(\xbf^*)\}$ for some $q'\in(1,2]$. Denote by $\xhb$ a $(\delta,q)$-approximate solution to  \eqref{Eq: SAA-ell2} with  some $q:q'\leq q\leq 2$ and $\delta\leq \frac{1}{N}$.  Under Assumption \ref{GC condition constant all} and Assumption \ref{assumption: Variance suboptimality} w.r.t. the $L^p$-norm for some $p:2\leq p\leq q'/(q'-1)$,   there exists  $\xbf^{*,\epsilon}\in \X^{*,\epsilon}$  that satisfies the following   for   any $\vartheta>0$ and   $\beta\in(0,1)$:
\begin{align}
&\E[\Vert\xbf^{*,\epsilon}-\xhb\Vert_q^2] \leq \vartheta,~~\text{if }N\geq \frac{C_{11}\cdot  p\cdot\psi_{p}^2\cdot (R^*)^2}{ (q'-1)^2\cdot   \epsilon^2\cdot \vartheta};~~~~\text{and}\label{solution bound first convex}
\\
&\Prob\left[\Vert\xbf^{*,\epsilon}-\xhb\Vert_q^2 \leq \vartheta \right]\geq 1- \beta,~~\text{if }N\geq \frac{C_{11} \cdot  p\cdot\psi_{p}^2\cdot (R^*)^2}{ (q'-1)^2\cdot   \epsilon^2\cdot \vartheta}\cdot \beta^{-\frac{2}{p}},\label{large deviation bound first convex}
\end{align}
where  $C_{11}>0$ is some universal constant.
\end{theorem}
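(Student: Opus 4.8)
The plan is to \emph{reduce the convex case to the strongly convex, non-Lipschitzian bound of Theorem~\ref{thm: first main theorem}} by treating the Tikhonov-regularized objective as a genuinely strongly convex stochastic program. Write $g(\xbf,\xi):=f(\xbf,\xi)+\lambda_0 V_{q'}(\xbf)$ and $G(\xbf):=\E[g(\xbf,\xi)]=F(\xbf)+\lambda_0 V_{q'}(\xbf)$, and observe that SAA~\eqref{Eq: SAA-ell2} is precisely the canonical SAA~\eqref{Eq: SAA} for the cost $g$. Under Assumption~\ref{GC condition constant all} the function $F$ is convex, so $G$ is strongly convex and coercive on $\X$ (since $F\geq F(\xbf^*)$ on $\X$ while $V_{q'}$ grows quadratically), hence it attains its minimum over $\X$ at a unique point, which I denote $\xbf^{*,\epsilon}$; this is the point that will appear in the statement, and it is the optimal solution to the regularized stochastic program $\min_{\xbf\in\X}G(\xbf)$.

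Next, I would check that the triple $(g,G,\xbf^{*,\epsilon})$ satisfies the hypotheses of Theorem~\ref{thm: first main theorem} with the $q'$-norm in the role of the $q$-norm, with modulus $\mu':=(q'-1)\lambda_0$, with $\kappa\equiv 0$, and with the same exponent $p$ and constant $\psi_p$. For Assumption~\ref{SC condition constant}.(a): convexity of $f(\cdot,\xi)$ gives $f(\xbf,\xi)-f(\xbf^{*,\epsilon},\xi)\geq\langle\nabla f(\xbf^{*,\epsilon},\xi),\xbf-\xbf^{*,\epsilon}\rangle$, while $V_{q'}=\frac12\Vert\cdot-\xbf^0\Vert_{q'}^2$ is $(q'-1)$-strongly convex with respect to $\Vert\cdot\Vert_{q'}$ (a standard fact, cf.\ Appendix~\ref{sec: preliminary}); adding these at the base point $\xbf^{*,\epsilon}$ and using $\nabla g=\nabla f+\lambda_0\nabla V_{q'}$ yields, for every $\xbf\in\X$ and almost every $\xi$,
\[
g(\xbf,\xi)-g(\xbf^{*,\epsilon},\xi)\ \geq\ \langle\nabla g(\xbf^{*,\epsilon},\xi),\,\xbf-\xbf^{*,\epsilon}\rangle+\frac{(q'-1)\lambda_0}{2}\,\Vert\xbf-\xbf^{*,\epsilon}\Vert_{q'}^2,
\]
which is exactly Assumption~\ref{SC condition constant}.(a) with vanishing $\kappa$; since $q'\in(1,2]$ and $2\leq p\leq q'/(q'-1)$, the norm/moment restrictions of Theorem~\ref{thm: first main theorem} are met. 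For Assumption~\ref{SC condition constant}.(b), the regularizer is deterministic, so $\nabla g(\xbf,\xi)-\nabla G(\xbf)=\nabla f(\xbf,\xi)-\nabla F(\xbf)$ and the required $L^p$ bound at $\xbf^{*,\epsilon}$ becomes simply $\Vert\nabla f(\xbf^{*,\epsilon},\xi)-\nabla F(\xbf^{*,\epsilon})\Vert_{L^p}\leq\psi_p$.

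The only step that does genuine work — and the reason Assumption~\ref{assumption: Variance suboptimality}, stated uniformly over the $\epsilon$-suboptimal set $\X^{*,\epsilon}$, is the correct hypothesis — is verifying $\xbf^{*,\epsilon}\in\X^{*,\epsilon}$, which then makes the $L^p$ bound above follow from \eqref{moment bound here}. This is the familiar regularization estimate: by optimality of $\xbf^{*,\epsilon}$ for $G$ together with $\lambda_0=\epsilon/(2R^*)$ and $R^*\geq\frac12 V_{q'}(\xbf^*)$,
\[
F(\xbf^{*,\epsilon})\ \leq\ F(\xbf^{*,\epsilon})+\lambda_0 V_{q'}(\xbf^{*,\epsilon})\ =\ G(\xbf^{*,\epsilon})\ \leq\ G(\xbf^*)\ =\ F(\xbf^*)+\lambda_0 V_{q'}(\xbf^*)\ \leq\ F(\xbf^*)+\epsilon,
\]
so $\xbf^{*,\epsilon}\in\X^{*,\epsilon}$, as needed.

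Finally I would apply Theorem~\ref{thm: first main theorem} to the stochastic program $\min_{\xbf\in\X}G(\xbf)$ solved by $\min_{\xbf\in\X}N^{-1}\sum_{j=1}^N g(\xbf,\xi_j)$, i.e.\ SAA~\eqref{Eq: SAA-ell2}: its bound \eqref{solution bound first} gives $\E[\Vert\xbf^{*,\epsilon}-\xhbopt\Vert_{q'}^2]\leq\vartheta$ whenever $N\geq C_8 p\,\psi_p^2/((\mu')^2\vartheta)$, and its $\kappa\equiv 0$ branch \eqref{second part thm 1} gives the $1-\beta$ probability version with the extra factor $\beta^{-2/p}$. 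Substituting $\mu'=(q'-1)\epsilon/(2R^*)$, so $1/(\mu')^2=4(R^*)^2/((q'-1)^2\epsilon^2)$, and absorbing the $4$ into a universal constant $C_9:=4C_8$, reproduces precisely the thresholds in \eqref{solution bound first convex} and \eqref{large deviation bound first convex}; the stated bound in $\Vert\cdot\Vert_q$ then follows from $\Vert\cdot\Vert_q\leq\Vert\cdot\Vert_{q'}$ for $q\geq q'$ under the standing convention $q'\leq q$. Beyond the $\xbf^{*,\epsilon}\in\X^{*,\epsilon}$ observation, there is essentially no obstacle; the only thing to be careful about is that Theorem~\ref{thm: first main theorem} must be invoked for the well-posed regularized program $\min_\X G$, never for the original (possibly non-strongly-convex) $F$.
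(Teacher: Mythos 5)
Your proposal is correct and follows essentially the same route as the paper's proof: view SAA \eqref{Eq: SAA-ell2} as the canonical SAA for the regularized program $\min_{\X}\,F+\lambda_0 V_{q'}$, verify via the choice $\lambda_0=\epsilon/(2R^*)$ that its unique minimizer is $\epsilon$-suboptimal for the original problem (so Assumption \ref{assumption: Variance suboptimality} supplies Assumption \ref{SC condition constant}.(b) there), check Assumption \ref{SC condition constant}.(a) with modulus $(q'-1)\lambda_0$ and $\kappa\equiv 0$, and then invoke \eqref{solution bound first} and \eqref{second part thm 1} of Theorem \ref{thm: first main theorem}. The details, including the substitution $\mu=(q'-1)\epsilon/(2R^*)$, match the paper's argument.
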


\begin{proof} Observe that SAA  \eqref{Eq: SAA-ell2} can be viewed as the SAA \eqref{Eq: SAA} formulation to an SP problem of the below:
\begin{align}
\min_{\xbf\in\X}F(\xbf)+\lambda_0 V_{q'}(\xbf).\label{suboptimal SP}
\end{align}
Denote by $\ybf$ the optimal solution to this new SP problem. It must hold that%
\begin{align}
F(\ybf)+\lambda_0 V_{q'}(\ybf)\leq F(\xbf^*)+\lambda_0 V_{q'}(\xbf^*)=F(\xbf^*)+\frac{\epsilon}{2R^*}\cdot V_{q'}(\xbf^*)\leq F(\xbf^*)+\epsilon,\label{suboptimal inequality bound}
\end{align}
where we recall that $\xbf^*$ denotes the optimal solution to the SP problem \eqref{Eq: SP problem statement}.
Therefore, $\ybf$ is an $\epsilon$-suboptimal   solution to the original SP problem in \eqref{Eq: SP problem statement}. As a result, Assumption \ref{assumption: Variance suboptimality} implies that Assumption \ref{SC condition constant}.(b) holds at $\ybf$, an optimal solution of \eqref{suboptimal SP}, with the same constant $\psi_p$. Here,  ``$\xbf^*$'' in Assumption \ref{SC condition constant}.(b) should be equal to  $\ybf$ for this current proof.

Meanwhile, by Assumption \ref{GC condition constant all} and the $(q'-1)$-strong convexity of $V_{q'}$ w.r.t. the $q'$-norm  in the sense of \eqref{SC V} in Appendix \ref{sec: preliminary} \citep{ben2001ordered}, we know that the objective functions of both SAA \eqref{Eq: SAA-ell2} and the SP problem in \eqref{suboptimal SP} must be $\lambda_0(q'-1)$-strongly convex w.r.t.\,the $q'$-norm. Therefore, \eqref{suboptimal SP} admits a unique solution $\ybf$ and  Assumption \ref{SC condition constant}.(a) holds with modulus $\mu:=\lambda_0(q'-1)=\frac{\epsilon}{2R^*} (q'-1)$ w.r.t. the $q'$-norm.  Again, ``$\xbf^*$'' in Assumption \ref{SC condition constant}.(a) should be equal to $\ybf$ for this current proof.

In view of the above, we may invoke  \eqref{solution bound first} of Theorem \ref{thm: first main theorem} (where the $q$-norm therein is treated as the $q'$-norm for this current theorem), considering \eqref{suboptimal SP} as the target SP problem, and viewing SAA \eqref{Eq: SAA-ell2} as the corresponding SAA \eqref{Eq: SAA} formulation, whose   solution in the sense of \eqref{define solution} is denoted by $\xhb$. The result of Theorem \ref{thm: first main theorem} then implies that
$\E[\Vert \xhb-\ybf\Vert_q^2]\leq \vartheta,$ $\text{if~}N\geq \frac{\widehat Cp\psi^2_p}{\lambda_0^2 (q'-1)^2 \vartheta}=\frac{\widehat C'p(R^*)^2\cdot \psi^2_p}{\epsilon^2(q'-1)^2 \vartheta},$
for some universal constants $\widehat C,\,\widehat C'>0$ and any given $\vartheta>0$.
Combining this with \eqref{suboptimal inequality bound}---that is, $\ybf$ must be an $\epsilon$-suboptimal solution to the original SP problem in \eqref{Eq: SP problem statement}--- we immediately have the desired result in Eq.\,\eqref{solution bound first convex}. 

Similarly, the result    in  \eqref{large deviation bound first convex} holds as a result of \eqref{second part thm 1} from Theorem \ref{thm: first main theorem}.
\end{proof}

\smallskip


\begin{remark}Theorems \ref{thm: first main theorem} and \ref{thm: first main theorem convex} (when one select $R^*$ to be comparable to $V(\xbf^*)$)    formally state the promised sample complexity as in \eqref{strong convex case nonlip} and \eqref{general convex case nonlip} of Section \ref{sec: intro}, respectively. Again, no metric entropy term is in presence.  

Another potentially desirable feature of these complexity bounds is that they do not depend on any Lipschitz constants of $F$ and $f(\cdot,\xi)$, nor those of their (sub)gradients. This result predicts SAA to be effective even if the Lipschitz constants are undesirably large and even potentially unbounded (due to the possibly unbounded feasible region). In contrast, most existing SAA's sample complexity bounds, e.g., by \cite{shapiro2021lectures,hu2020sample,shalev2010learnability}, and \cite{oliveira2023sample}, grow polynomially with the some Lipschitz constants.   To our knowledge, the only SAA result under similar conditions is provided by \cite{milz2023sample}, whose findings imply the same error bound of Theorem \ref{thm: first main theorem} in the $2$-norm setting. Nonetheless, our analysis in Theorem \ref{thm: first main theorem} 
   presents an alternative proof and generalizes from the $2$-norm setting to more general $q$-norm ($1\leq q\leq 2$) settings. Further, Theorem \ref{thm: first main theorem convex} applies to   SP problems  not necessarily strongly convex, which are not discussed by \cite{milz2023sample}. 
   
    To  our knowledge,  there currently is limited theory for   SMD's effectiveness   when none of the  Lipschitz constants of    $F$, $f(\cdot,\xi)$, or their (sub)gradients admits a known upper bound. Comparing with  SMD, our results show  SAA's potential for better applicability  to the  SP problems in less desirable settings. 
\end{remark}


\begin{remark}\label{light tailed conversion}
It is also worth noting that both Theorems \ref{thm: first main theorem} and \ref{thm: first main theorem convex} explicate the evolution of the complexity rate w.r.t. $\beta$, as the underlying distribution gradually admits more and more bounded central moments (and thus the tail becomes lighter and lighter). Once it is admissible to let $p\geq c\ln (1/\beta)$ for some constant $c >0$, the complexity then becomes logarithmic in $1/\beta$.
\end{remark}


Similar to Remark \ref{remark: no sub gap}, Theorem \ref{thm: first main theorem convex} does not provide a guarantee on the solution's  suboptimality gap. Yet, such a guarantee is, as we suspect, hardly available due to the limited regularities in the non-Lipschitzian settings of consideration.  

{\color{black}\begin{remark}\label{remark: dimension remark last} By the same argument as in Remark \ref{remark: dependence on dimensions}, the sample bounds in  Theorem \ref{thm: first main theorem convex} depend on $\psi_p^2$, which grows with $d$ in general. When, for some $p\geq 2$ and $\phi_p\geq 0$, the component-wise $p$th  central moment of $G(\xbf,\xi)$ is bounded by $\phi_p^p$ everywhere, we may show that  $\psi_p^2\leq d^{2/p}\phi_p^2 $. The dependence on $d$ diminishes when it is admissible to let $p\geq c\ln d$ for some constant $c>0$.
\end{remark}
}

\section{Numerical Experiments}\label{sec: simulated data}
Numerous empirical results on SAA, as well as its comparison with SMD in the light-tailed setting, have been previously reported \citep[e.g., by][]{nemirovski2009robust,Dvinskikh03092022}. The  experiments presented in this section were designed to complement existing findings by examining  differences in performance  among the different SAA variations of discussion {\color{black} in both light-tailed and heavy-tailed scenarios as well as their comparisons with SMD in heavy-tailed settings.  \citep[Comparisons between SMD and SAA in light-tailed scenarios have been reported by][]{nemirovski2009robust}. To that end, we present two sets of numerical experiments, one on a light-tailed SP problem as   in Section \ref{sec: experiment on light-tailed problem} and one on a   heavy-tailed SP problem as  in Section \ref{sec: experiment on heavy-tailed problem}.} All experiments were implemented in Matlab R2024b and run sequentially on the HiPerGator supercomputing cluster at the University of Florida with AMD EPYC 75F3 32-Core Processor and 32 GB memory. 

\subsection{Experiment on solving a light-tailed SP problem}\label{sec: experiment on light-tailed problem}
For simulation setups, we followed  \cite{fan2014strong} and \cite{liu2019sample} in constructing a stochastic quadratic program---which is equivalent to a population-level linear regression problem---of the form:
\begin{align}
\textcolor{black}{\min_{\xbf\in\R^d} \E_{\boldsymbol a,b}\left[( \boldsymbol a^\top\xbf-b)^2\right]},\label{original test problem 1}
\end{align}
with random parameters specified as $\xi:=(\boldsymbol a,b)\in\R^{d+1}$ and governed by the
data generating process of $b = \boldsymbol a^\top\xbf^{*}+w$. Here, we let $\boldsymbol a\in\R^d$ be a centered Gaussian distributed random vector. The corresponding  covariance matrix $\Sigma=(\varsigma_{i_1,i_2})\in\R^{d\times d}$ was calculated as $\varsigma_{i_1,i_2}=0.5^{\vert i_1-i_2\vert}$. The white noise term $w$ followed the standard Gaussian distribution, independent of $\boldsymbol a$. 
We simulated an i.i.d. sample of $(\boldsymbol a, b)$, denoted by $(\boldsymbol a_j,  b_j)$ for $j=1,...,N$, with  the specific values of $d$ and $N$ to be provided subsequently. The corresponding optimal solution to this SP problem is verifiably $\xbf^*$, which was also randomly generated according to 
$
\xbf^*_{S}=1.5\ubf/\Vert \ubf\Vert_{1.8}$ and $\xbf^*_{S^c}=1.5\vbf/\Vert \vbf\Vert_{1.8}$,
where the entries of $\ubf$ and $\vbf$ were specified as  independent realizations of the standard Gaussian distribution. We let $S$ be the index set of $r$-many uniformly randomly selected dimensions of $\xbf^*$ (with $r:=\min\{d,\,200\}$) and $S^c$ be the complement of $S$. The setups above ensured that $\Vert\xbf^*\Vert_{1.8}$ should remain small in spite of increasing dimensionality, yet $\xbf^*$ was unlikely to be sparse.

We evaluated the effectiveness of SAA in the following several configurations:
\begin{description}
\item[(A). SAA$_r$:] The  SAA   as in \eqref{Eq: SAA} solved  \textcolor{black}{with high accuracy} by \textcolor{black}{the standard gradient descent algorithm} with an initial solution generated uniformly randomly on $[-0.5,\,0.5]^d$.
\item[(B). SAA$_0$:] The  SAA    as in \eqref{Eq: SAA} solved \textcolor{black}{with high accuracy} by \textcolor{black}{\label{the standard gradient}the standard gradient descent algorithm} initialized at the all-zero solution;
\item[(C). SAA-L$_{q'}$:] The SAA  as in \eqref{Eq: SAA-ell2} with $q'\in\{1.01,\,1.5,\,2\}$ solved \textcolor{black}{with high accuracy} by \textcolor{black}{the  standard gradient descent algorithm} with the same initial solution as SAA$_r$;
\item[(D). LASSO:] The LASSO formulation \citep{tibshirani1996regression} solved with high accuracy by  the iterative soft thresholding algorithm \citep{chambolle1998nonlinear} with the same initial solution as SAA$_r$. Since LASSO is a well-studied and widely applied scheme particularly suitable for high-dimensional linear regression, we regard LASSO as a reasonable benchmark scheme. The LASSO formulation incorporates a $1$-norm sparsity-inducing penalty, whose coefficient is  also denoted by $\lambda_0$ hereafter.
\end{description}
\begin{figure}
    \centering
    \small 
    \begin{tabular}{ccc}
\includegraphics[width=0.4\textwidth]{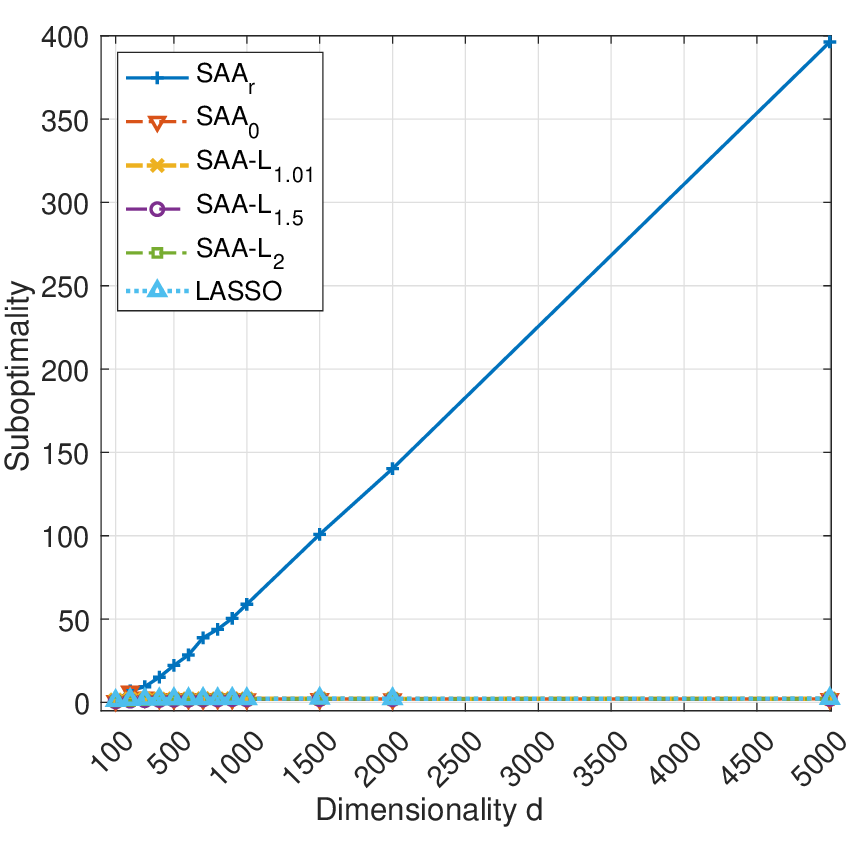} &
\includegraphics[width=0.4\textwidth]{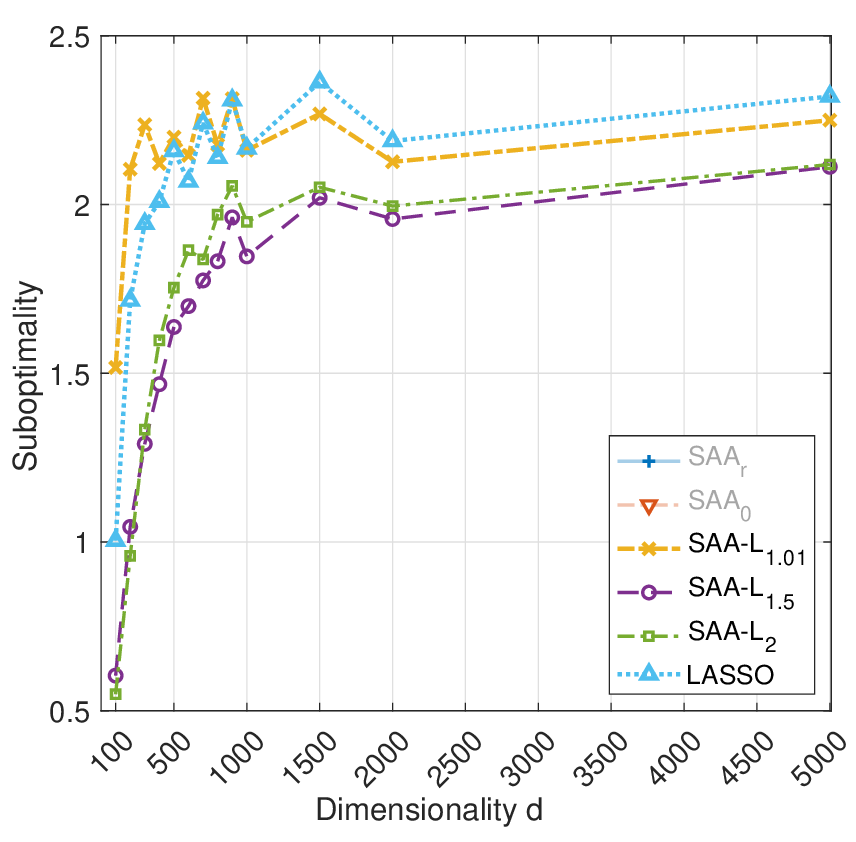}  \\
(a). &
(b).    \\
\includegraphics[width=0.4\textwidth]{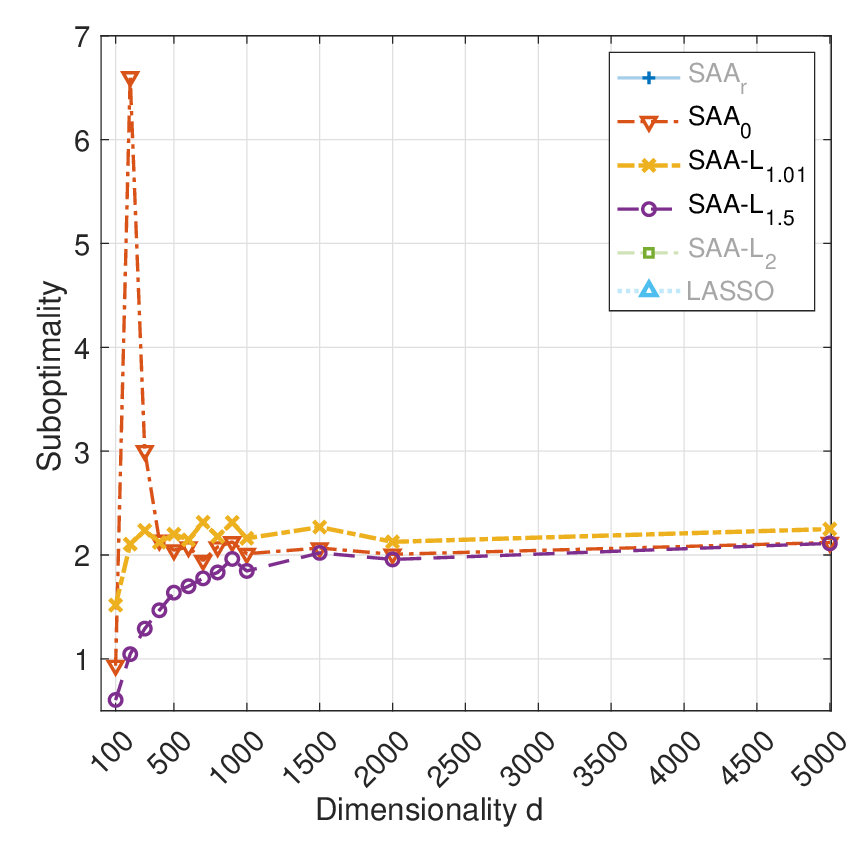} &
\includegraphics[width=0.4\textwidth]{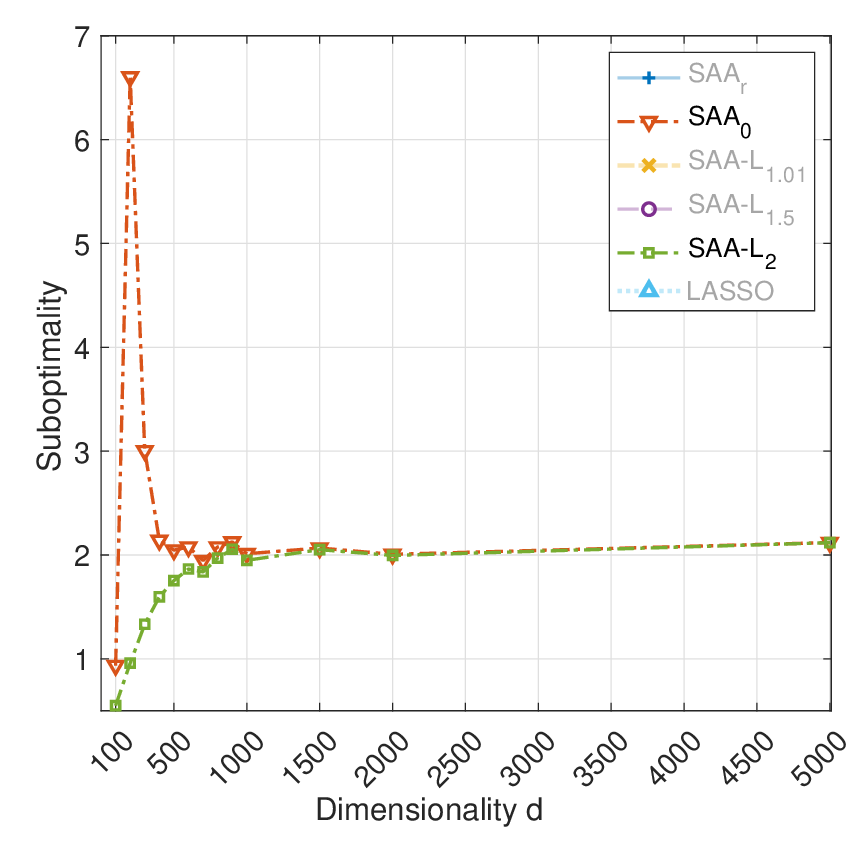}  \\
(c). &
(d).     \\
\end{tabular}
    \caption{Average suboptimality gaps (over  five replications) achieved by different variations of SAA and their comparison with LASSO when dimensionality increases and $N=200$. Subplot (a) presents all the methods, while subplots (b)-(d) present  magnified views for different  subsets of the methods. }
    \label{fig: synthetized}
\end{figure}
\begin{figure}[H]
    \centering
    \small   \includegraphics[width=0.55\textwidth]{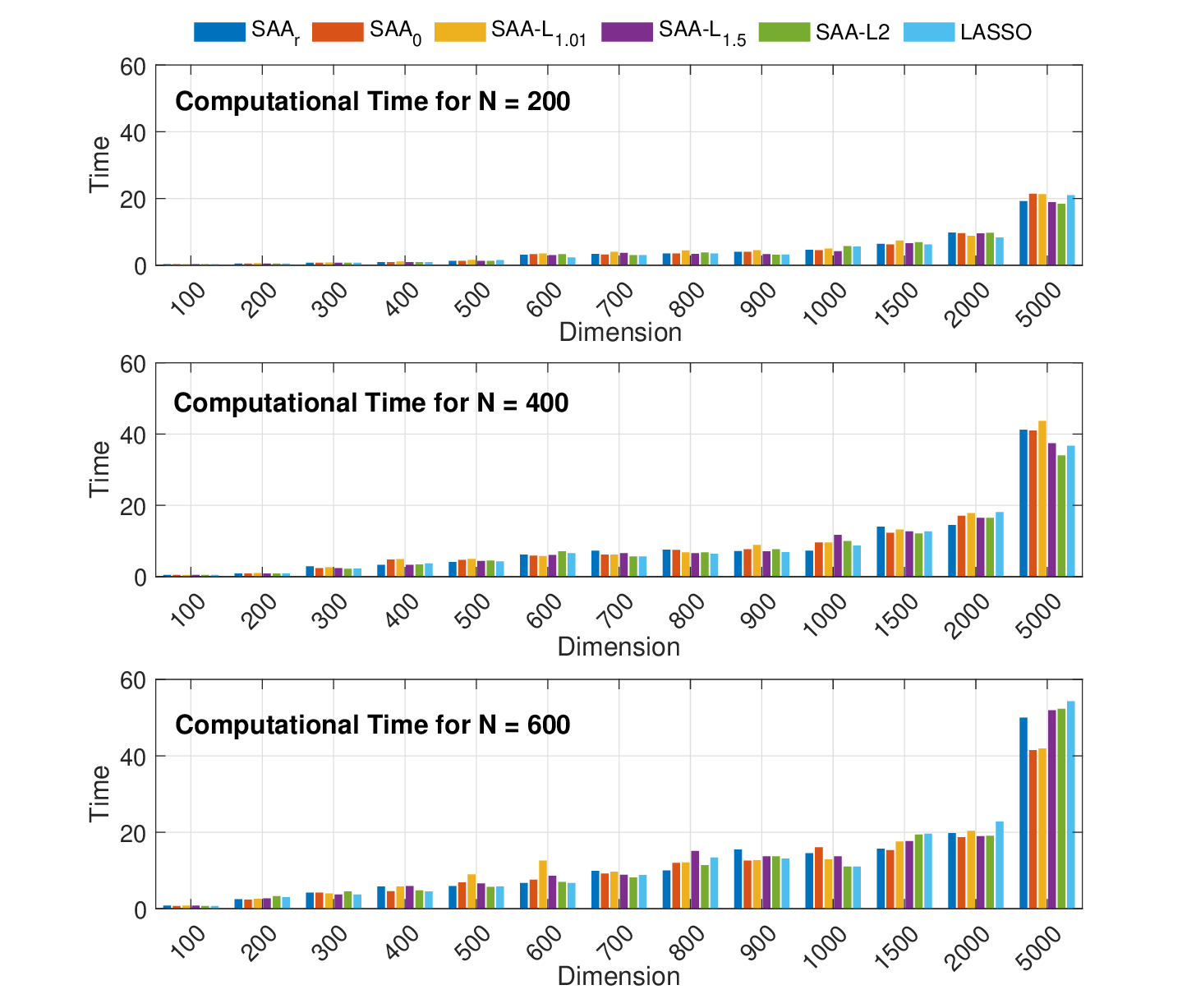}
    \caption{Average  computational time (s) over  five replications incurred by different variations of SAA and their comparison with LASSO when dimensionality increases. The subplots in the top, middle, and bottom report results for $N=200$, $400$, and $600$, respectively.}
    \label{fig: synthetized bars}
\end{figure}

For all variations of SAA-L$_{q'}$ (namely, SAA \eqref{Eq: SAA-ell2}), we let $\xbf^0=\mathbf 0$. To choose the value for $\lambda_0$, we performed cross-validation. In particular, we first specified     $N=200$, $d=1000$ and simulated two  i.i.d.\,random samples of $\xi$---one was used as  the training set, and the other, the validation set.   For each of the SAA variations in (C) and (D), we constructed its formulation based on the training set, specifying   $\lambda_0$ to be one candidate value within the set \{0.01, 0.05, 0.1, 0.15, 0.2, 0.25, 0.3, 0.35, 0.4, 0.45, 0.5\}. The resulting  solution $\boldsymbol x_{\lambda_0}$ to each   SAA variation   was then evaluated on the validation set, denoted by $\widehat\xi_j$, $j=1,...,200$,  in terms of the average cost value calculated as $\frac{1}{200}\sum_{j=1}^{200}f( \boldsymbol x_{\lambda_0},\widehat\xi_j)$. Fixing   $\lambda_0$ to  each of its candidate values, we repeated the process above for five independent runs. The  $\lambda_0$-value that led an SAA variation to generate the best average validation performance out of the five repetitions was  then selected for the corresponding SAA variation. 

With the setups above, our  experiment was to evaluate how the performance, in terms of the resulting suboptimality gaps and $\ell_2$-loss, of the aforementioned schemes  evolved as $d$ increased from 100 to 5000 for different sample sizes $N\in\{200,\,400,\,600\}$. Here, the suboptimality gap for a solution $\xbf$ can be calculated through a closed-form formula: $(\xbf-\xbf^*)^\top\Sigma(\xbf-\xbf^*)$ and the corresponding $\ell_2$-loss value is calculated as $\Vert\xbf-\xbf^*\Vert_2$. For each ($N$,\,$d$)-combination, we repeated the experiments five times.   Figure \ref{fig: synthetized} presents the resulting solution quality suboptimality gaps for $N=200$.   Meanwhile,  Figure \ref{fig: synthetized bars}  reports the corresponding computational time. Results on more scenarios (e.g., when $N=400$ and $N=600$) as well as additional details are shown in Tables S1, S2, and S3 of the electronic supplement   for the suboptimality gaps, $\ell_2$-loss values, and computational time, respectively.  
 From these results, we had  several  observations   as summarized below.

\begin{itemize}
\item First, the solution quality of SAA$_r$ deteriorated   (at a roughly linear rate) as dimensionality $d$ increased. In contrast, all other methods preserved   better performance despite the growth in dimensionality. Among them, as particularly showcased in Subplots  (b) and (c) of Figure \ref{fig: synthetized}, SAA-L$_{1.5}$ consistently achieved the best performance. Meanwhile, SAA-L$_{2}$ also exhibited better performance than  most other methods.  To our analysis, this was because the configurations of the $q'$-norm  (1.5-norm and 2-norm) for the regularization term in SAA-L$_{1.5}$ and SAA-L$_{2}$  were more consistent  with the underlying problem structure that  $\Vert\xbf^*\Vert_{1.8}$ was small. Furthermore, SAA-L$_{1.5}$ and SAA-L$_{2}$ improved over the performance of LASSO, a well-studied and widely applied high-dimensional scheme.  

\item Second, as was also shown in Figure \ref{fig: synthetized}, in many higher-dimensional cases, SAA$_0$ (namely, non-regularized SAA \eqref{Eq: SAA} given an initial solution closer to $\xbf^*$) often achieved comparable performance, if not better, than   LASSO, exhibiting a drastically different pattern than the solution to SAA$_r$---the same formulation of SAA \eqref{Eq: SAA} computed with different initialization. This was an indication that the original  non-regularized SAA  \eqref{Eq: SAA} could admit solutions that are less sensitive to the growth in dimensionality if properly initialized. Nonetheless,  SAA$_0$ still generated  significantly worse performance than SAA-L$_{q'}$ for the cases such as when (i) $N=200$ and $d=200$ or (ii) $N=200$ and $d=300$. These observations suggested the potential benefits of incorporating the regularization terms of discussion into the conventional SAA method.

\item {\color{black}\label{rev double descent}Third, related to the previous point, in  subplots (c) and (d), the curves corresponding to $\text{SAA}_0$ exhibit  peaks in suboptimality when the problem dimensionality $d$ is close to the sample size $N=200$. Then, the suboptimality gap  decreases when $d$ grows beyond $N$, approaching levels comparable to or even below those observed for regularized SAA variations such as SAA-L$_2$. 
We conjecture that this pattern reflects the recently documented phenomenon of  {``double descent''} observed in various machine learning models. Specifically, double descent refers to the behavior where the excess risk of a model (captured here by the suboptimality gap) peaks when the number of features --- that is, the dimensionality $d$ --- approaches the sample size $N$, and then decreases again as $d$  continues to grow \citep{belkin2020two}. While a deeper investigation is beyond the scope of this work, we intend to explore potential connections between double descent and our sample complexity bounds in future research.}
\item Fourth, as  shown in Figure \ref{fig: synthetized bars},  despite the variations in formulations, the computational effort in solving  {SAA-L$_{q'}$} (namely, SAA  in \eqref{Eq: SAA-ell2}) did not differ significantly from solving SAA$_0$ or SAA$_r$ (that is, \eqref{Eq: SAA}).  Indeed, none of the formulations involved in the tests were  dominantly more efficient to solve  across most problem scenarios. 

\end{itemize}

Closely similar patterns as mentioned above can be also observed from Tables S1
 and S2 
 for other values of $N$ (namely, $N\in\{400,600\}$) as well as under different quality metric (when $\ell_2$-loss is used to evaluate the solution quality in Table S2 
 instead of the suboptimality gap in Table S1).

{\color{black}
\subsection{Experiment on solving a heavy-tailed SP problem}\label{sec: experiment on heavy-tailed problem}

 Based on a stochastic utility problem in the experiments of  \cite{nemirovski2009robust}, we constructed a stochastic nonsmooth convex problem of the form:
\begin{align} \label{stochastic nonsmooth convex problem}
\min_{\xbf\in\R^d} \E\left[ f(\xbf, \xi)\right],~~\text{where}~ f(\xbf, \xi)=\phi\left(\sum\limits_{i=1}^d\left(i/d+r_i\right)x_i\right)+\frac{M}{2}\sum\limits_{i=1}^d\left(x_i-1\right)_+^2+\frac{M}{2}\sum\limits_{i=1}^d\left(-x_i-1\right)_+^2,
\end{align}
where $\phi:\,\R\rightarrow\R$  in the first term of $f(\cdot,\xi)$ is a piecewise convex linear function given by
$\phi(t) = \max\{v_k+s_k t:\,k=1,...,10\},$
with $v_k$ and $s_k$ being fixed parameters and $\xi:=(r_i)$ being the vector of random parameters with additional details provided in the sequel.   The second and third terms of \eqref{stochastic nonsmooth convex problem} can be viewed as penalty functions to handle box constraints $\{\xbf\in \R^d, -1\leq\xbf\leq1\}$ with notation $(\cdot)_+:=\max\{0,\cdot\}$ and $M=1000$. 

The values of the deterministic parameters $v_k$ and $s_k$ were generated in the beginning of the experiment following the standard Gaussian distribution. After generation, these values were fixed throughout the rest of our experiment. To simulate heavy-tailed uncertainty, we constructed $r_i$, $i=1,...,d$, as $r_i=\nu_i-\E[\nu_i]$. Here, $\nu_i$ is an i.i.d. sequence of  random variables following the Type I Pareto distribution with shape parameter $3.01$ and scale parameter $1$. By this design, the subgradient of \eqref{stochastic nonsmooth convex problem} admits a bounded second moment but is heavy-tailed.   To provide a reference in evaluating solution quality, we solved a high-fidelity SAA problem \eqref{Eq: SAA} for the SP in \eqref{stochastic nonsmooth convex problem} with $N=50,000$  and considered the corresponding solution to be   a (closely near-)optimal solution. Throughout this section, this solution is denoted by  $\xbf^*$. 

 In this experiment setting, we evaluated the effectiveness of  SAA-L$_{q'}$ variants and LASSO, following the same configurations as discussed in Section \ref{sec: experiment on light-tailed problem}. In addition, we also included the following two settings of SMD in numerical comparison.
 

 \begin{description}
\item[(A). SMD-L$_{1}$:] The SP formulation as in \eqref{Eq: SP problem statement} solved by SMD in $1$-norm setting; namely, the entropic stochastic mirror descent method discussed by \cite{nemirovski2009robust}. 
Based on the same paper, this method applies to SP problems subject to a simplex. To be consistent with this stipulation, we first observe that   \eqref{stochastic nonsmooth convex problem} is equivalent to $\min\left\{F( \xbf):\, \Vert \xbf\Vert_1\leq R_{\ell_1}\right\}$ for a given $R_{\ell_1}\geq \Vert\xbf^*\Vert_1$. This problem is then further transformed into a simplex-constrained problem as follows:
\begin{align}
\min\left\{F\big(R_{\ell_1}\cdot (\ybf_+-\ybf_-)\big):\, \mathbf 1^\top\ybf_++\mathbf 1^\top \ybf_-+s= 1,\,\ybf_+\geq 0,\,\ybf_-\geq 0,\,s\geq 0\right\}.\label{converted problem}
\end{align}
The SMD-L$_{1}$ then refers to the outcomes generated by the entropic stochastic mirror descent in  solving \eqref{converted problem} with  {\it a-priori} knowledge that $R_{\ell_1}:=2.5\cdot \Vert\xbf^*\Vert_1$. We additionally adopted  initial solution  and step size specified as $\theta\cdot  {\sqrt{2\cdot\ln d}}\cdot (\widetilde M_\infty\sqrt{N})^{-1}$, both according to \cite{nemirovski2009robust} with $\widetilde M_\infty$ and $\theta>0$ determined via the approaches  in Appendix \ref{details SMD stepsize}.

%
\item[(B). SMD-L$_{2}$:] The SP formulation as in \eqref{Eq: SP problem statement} was solved by SMD in the 2-norm setting. More specifically, \eqref{Eq: SP problem statement}  admits an equivalent reformulation as $\min\left\{F( \xbf):\, \Vert \xbf\Vert_2\leq R_{\ell_2}\right\}$ for any given $R_{\ell_2}\geq \Vert\xbf^*\Vert_2$. Here, we assumed the {\it a-priori} knowledge that $R_{\ell_2}=2.5\cdot \Vert\xbf^*\Vert_2$. ``SMD-L$_{2}$'' then refers to the solution generated by the robust stochastic approximation method (in the 2-norm setting) as discussed by \cite{nemirovski2009robust} in solving the said reformulation. In implementing this method, the initial solution was selected to be the origin and the step size was selected as $\gamma_t=\theta\cdot  {\Vert \xbf^*\Vert_2}\cdot ({\widetilde M_2\cdot \sqrt{N}})^{-1}$ according to \cite{nemirovski2009robust} with  $\widetilde M_2$ and $\theta>0$ determined in the approaches discussed in Appendix \ref{details SMD stepsize}.  
\end{description}

Our experiment on the performance of both SMD and SAA was performed on all combinations of $N\in\{200,\,400,\,600\}$ and $d\in\{100,\,200,\,...,\,900,\,1000,\,1500,\,2000,\,5000\}$. For each $(N,d)$-combination, five rounds of independent runs were performed.  To evaluate the quality of a given solution $\xbf$ from each run, two metrics are used. The first metric was the (approximate) suboptimality gap, calculated by $Gap(\xbf):=\frac{1}{1\times 10^4}\sum_{j=1}^{1\times 10^4} [f(\xbf,\widetilde\xi_j)-f(\xbf^*,\widetilde\xi_j)]$ for an independently generated i.i.d. sequence $\widetilde\xi_j$, $j=1,...,1\times 10^4$.  The second metric was the (approximate) $\ell_2$-loss, calculated as $\Vert\xbf-\xbf^*\Vert_2$.

We first repeated the same set of experiment as in Section \ref{sec: experiment on light-tailed problem} to compare different SAA variations and reported mean suboptimality gaps, mean $\ell_2$-loss values, and mean computational time out of five replications in Tables S4, S5, and S6 in the electronic supplement, respectively. Our observations were generally consistent with Section \ref{sec: experiment on light-tailed problem}. In particular, the below is a summary of some key patterns. 
\begin{itemize}
\item First, the solution quality of SAA$_r$ degraded (approximately linearly) as the dimension $d$ increased, whereas all other methods largely maintained better performance under growing dimensionality.  

\item Second, in many higher-dimensional settings, SAA$_0$ (i.e., the non-regularized SAA \eqref{Eq: SAA} initialized at the all-zero solution) often performed on par with LASSO, in  non-trivial contrast with SAA$_r$ --- the same SAA formulation \eqref{Eq: SAA} but started from a different initial point. This suggests again that the original non-regularized SAA \eqref{Eq: SAA} can yield solutions that are less sensitive to dimensionality when it is suitably initialized. 

\item Third, as shown in Figure \ref{fig: synthetized bars}, despite the differences in formulations, the computational effort required to solve {SAA-L$_{q'}$} (i.e., SAA in \eqref{Eq: SAA-ell2}) was comparable to that of solving SAA$_0$ or SAA$_r$ (i.e., \eqref{Eq: SAA}). In fact, none of the SAA variants tested was uniformly more efficient to solve across the range of problem instances. 
\end{itemize}

We further compared the performance of SAA with SMD to numerically validate the theoretical prediction that the two methods are comparable in solution quality. To that end, we compared (i) SAA-L$_{1.01}$ with SMD-L$_{1}$  and (ii) SAA-L$_2$ with SMD-L$_2$. These pairs of methods for comparison were selected due to their comparable  choices of distance metric. Tables S7 
and S8 
report the results in terms of mean suboptimality gaps and computational time of SMD-L$_{1}$  (and SMD-L$_{2}$) as well as their relative differences compared to SAA-L$_{1.01}$ (and SAA-L$_{2}$, respectively), calculated as per the following formula for all $(N,p)$ combinations:
\begin{align}
\text{\uline{Relative difference in suboptimality}}:=&\,\frac{Gap(\xbf_{\text{SMD}})-Gap(\xbf_{\text{SAA}})}{\frac{1}{1\times 10^4}\sum_{j=1}^{1\times 10^4} f(\xbf^*,\widetilde\xi_j)}\times 100\%,\quad\text{and}\label{formula for relative suboptimality}
\\ \text{\uline{Relative difference in runtime}}:=&\,\frac{\text{Runtime of SMD}-\text{Runtime of SAA}}{\text{Runtime of SAA}}\times 100\%.\label{formula for relative time}\end{align}
Figure \ref{fig: SMD vs SAA} plots the relative difference in suboptimality   for   $N=600$. Meanwhile, the following patterns were observed:
\begin{itemize}
\item \color{black}The solution quality between  SMD and SAA was comparable with   $-0.97\%\sim 0.82\%$ being the range of relative differences between  SMD-L$_1$ and SAA-L$_{1.01}$  and with $-3.81\%\sim 3.80\%$ the range of relative differences between SMD-L$_2$ and SAA-L$_{2}$. This observed similarity in solution quality, given the same sample size $N$, is consistent with our theoretical prediction that SAA and SMD entail comparable sample efficiency.
\item \color{black} The computational time incurred by SMD was   significantly lower than SAA with all relative differences   ranging from $-99.93\%$ to $-99.53\%$. 
\end{itemize}
{\color{black}These observations were all consistent with the experiment results for several light-tailed SP problems reported by \cite{nemirovski2009robust}.}

\begin{figure}
    \centering
    \small 
    \setlength{\tabcolsep}{4pt}
    \begin{tabular}{ccc}
\includegraphics[width=0.4\textwidth]{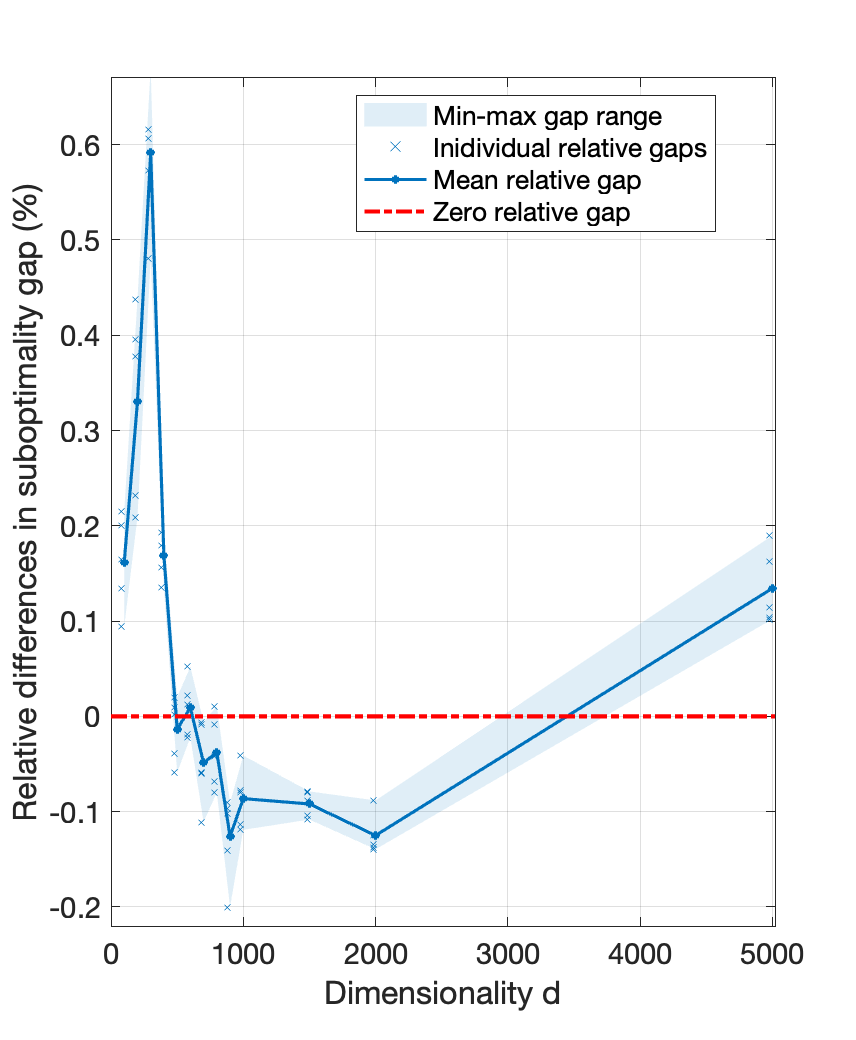} &
\includegraphics[width=0.4\textwidth]{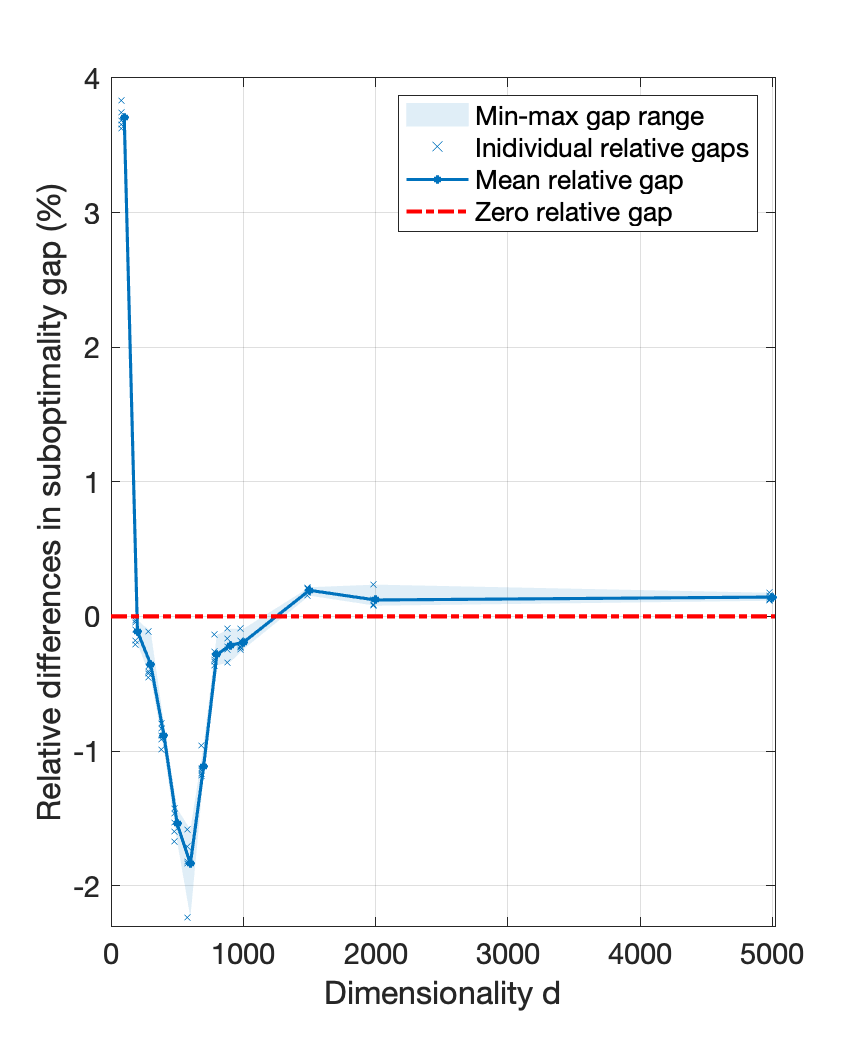} \\ 
(a). SAA-L$_{1.01}$ vs.  SMD-L$_{1}$ &
(b).   SAA-L$_{2}$ vs. SMD-L$_{2}$ 

\end{tabular}

\vspace{1mm}
    \caption{\color{black}Relative differences in suboptimality gap calculated as per \eqref{formula for relative suboptimality} for the case with $N=600$. Subplot (a) compares SAA-L$_{1.01}$ and SMD-L$_1$. Meanwhile, Subplot (b) compares  SAA-L$_{2}$ and SMD-L$_2$.}\label{fig: SMD vs SAA}
\end{figure}
   
}
   
\section{Conclusion}\label{sec: ccln}
This paper revisits the  sample complexity bounds for  SAA in both strongly convex and (non-strongly) convex SP problems. Under regularity conditions that are typical  to   the SP literature (and beyond the uniform Lipschitz condition), our findings show, perhaps for the first time, that  SAA  achieves sample complexity rates  completely free from any metric entropy terms. This represents a  deviation from the benchmark results where the inclusion of those metric entropy terms has been seemingly inevitable. Generally, as these terms elevate the dependence of the sample requirement on the problem dimensionality, our newly established sample complexity bounds are less sensitive to the increase of dimensionality compared to the state-of-the-art results in many scenarios. In particular, under  assumptions comparable to those in the existing literature on the SMD methods, a mainstream alternative solution approach to SP, part of our results provides  the first revelation that  SAA and the canonical SMD entail identical sample efficiency, closing a seemingly persistent theoretical gap of the order $O(d)$ between these two methods. Furthermore, we also identify some non-Lipschitzian cases where SAA can be shown to retain  effectiveness but, in contrast, the  results for  SMD are currently unavailable to  our knowledge. The paper also reports preliminary numerical experiments, in which our observations were consistent with our theoretical predictions.

\section*{Acknowledgement}
We would like to thank Professor Alexander Shapiro for insightful comments and fruitful discussions. We would also like to express our gratitude to the team of reviewers and editors for their efforts in evaluating and helping improve this paper.  The work is partially supported by NSF CMMI 2016571 and NSF CMMI 2213459.



 \bibliographystyle{abbrvnat}
 \bibliography{references}


\begin{appendices} 






\section{Auxiliary results}\label{sec: useful lemma 2}
This section presents useful results for our main proofs. 
{\color{black}
\begin{lemma}\label{important lemma: approximation error}
Let $q\in(1,2]$, $\epsilon\in(0,1]$, and $\beta\in(0,1)$ be any fixed scalars. Denote by $\widetilde{\xbf}(\cdot)$  a $(\delta,q)$-approximate solution to  SAA    \eqref{Eq: SAA-ell2} with its parameters ($q'$, $R^*$, $\lambda_0$, and $\delta$) specified as in \eqref{parameter choices new}. Assume that $\X$ is bounded with  $\mathcal D_{q'}$ being the $q'$-norm diameter of  $\X$ and that $f(\,\cdot\,,\xi)$ is convex on $B$ for every $\xi\in\Xi$. 
Under  Assumption   \ref{assumption f lipschitz}.(a), function $g(\cdot,\cdot)$ as defined in \eqref{constructed new function} satisfies the following inequality
\begin{multline}
-\varkappa_\delta\leq g(\boldsymbol\zeta_{1,N},\zeta)-\mathds{1}(\zeta\in\mathcal E_t)\cdot \left[f_{\lambda_0}(\widetilde\xbf(\boldsymbol\zeta_{1,N}),\zeta)+\mathcal D_{q'} \cdot (\lambda_0\mathcal D_{q'}+t)-f_{\lambda_0}(\xbf^*,\zeta)\right]\leq 0,\\~~\text{for all $\boldsymbol\zeta_{1,N}\in\mathcal E_t^N$ and all $\zeta\in\Xi$},
\label{reformed g} 
\end{multline}
where $\varkappa_\delta$ is  the same as  in \eqref{eq: define gamma} and   $\mathcal E_t^N$ is defined as in \eqref{event total consideration}.
\end{lemma}
\begin{proof} For  $\gamma>0$ given as in \eqref{eq: define gamma},  let $d_{\gamma}:\Xi^{2N}\rightarrow \R_+$ be the Hamming distance defined as in \eqref{hamming}.
We first observe the below for all $j=1,...,N$, and all $ (\boldsymbol \zeta_{1,N}, \boldsymbol\zeta_{1,N}')\in\mathcal E_t^N\times \mathcal E_t^N$:
\begin{align}
&\ \frac{1}{N}\sum_{j=1}^Nf_{\lambda_0}\left(\widetilde{\xbf}(\boldsymbol\zeta'_{1,N}),\zeta_j\right)-\frac{1}{N}\sum_{j=1}^Nf_{\lambda_0}\left(\widetilde{\xbf}(\boldsymbol\zeta_{1,N}),\zeta_j\right) 
\label{initial gap to bound and repli}
\\=& \sum_{j:\,\zeta_j\neq \zeta_j'}\frac{f_{\lambda_0}\left(\widetilde{\xbf}(\boldsymbol\zeta'_{1,N}),\zeta_{j}\right)-f_{\lambda_0}\left(\widetilde{\xbf}(\boldsymbol\zeta_{1,N}),\zeta_j\right)}{N} 
 {+}\sum_{{j:\,\zeta_j=\zeta_j'}}\frac{f_{\lambda_0}\left(\widetilde{\xbf}(\boldsymbol\zeta'_{1,N}),\zeta_{j}\right)-f_{\lambda_0}\left(\widetilde{\xbf}(\boldsymbol\zeta_{1,N}),\zeta_j\right)}{N}\nonumber
\\= &  {\sum_{j:\,\zeta_j\neq \zeta_j'}}\frac{f_{\lambda_0}\left(\widetilde{\xbf}(\boldsymbol\zeta'_{1,N}),\zeta_{j}\right)-f_{\lambda_0}\left(\widetilde{\xbf}(\boldsymbol\zeta_{1,N}),\zeta_j\right)}{N} - {\sum_{j:\,\zeta_j\neq \zeta_j'}}\frac{f_{\lambda_0}\left(\widetilde{\xbf}(\boldsymbol\zeta'_{1,N}),\zeta_{j}'\right)-f_{\lambda_0}\left(\widetilde{\xbf}(\boldsymbol\zeta_{1,N}),\zeta_j'\right)}{N}\nonumber
\\&+{\sum_{j:\,\zeta_j\neq \zeta_j'}}\frac{f_{\lambda_0}\left(\widetilde{\xbf}(\boldsymbol\zeta'_{1,N}),\zeta_{j}'\right)-f_{\lambda_0}\left(\widetilde{\xbf}(\boldsymbol\zeta_{1,N}),\zeta_j'\right)}{N}+{\sum_{j:\,\zeta_j= \zeta_j'}}\frac{f_{\lambda_0}\left(\widetilde{\xbf}(\boldsymbol\zeta'_{1,N}),\zeta_{j}\right)-f_{\lambda_0}\left(\widetilde{\xbf}(\boldsymbol\zeta_{1,N}),\zeta_j\right)}{N}\nonumber
\\
= &  {\sum_{j:\,\zeta_j\neq \zeta_j'}}\frac{f_{\lambda_0}\left(\widetilde{\xbf}(\boldsymbol\zeta'_{1,N}),\zeta_{j}\right)-f_{\lambda_0}\left(\widetilde{\xbf}(\boldsymbol\zeta_{1,N}),\zeta_j\right)}{N} - {\sum_{j:\,\zeta_j\neq \zeta_j'}}\frac{f_{\lambda_0}\left(\widetilde{\xbf}(\boldsymbol\zeta'_{1,N}),\zeta_{j}'\right)-f_{\lambda_0}\left(\widetilde{\xbf}(\boldsymbol\zeta_{1,N}),\zeta_j'\right)}{N}\nonumber
\\&+{\sum_{j:\,\zeta_j\neq \zeta_j'}}\frac{f_{\lambda_0}\left(\widetilde{\xbf}(\boldsymbol\zeta'_{1,N}),\zeta_{j}'\right)-f_{\lambda_0}\left(\widetilde{\xbf}(\boldsymbol\zeta_{1,N}),\zeta_j'\right)}{N}+{\sum_{j:\,\zeta_j= \zeta_j'}}\frac{f_{\lambda_0}\left(\widetilde{\xbf}(\boldsymbol\zeta'_{1,N}),\zeta_{j}'\right)-f_{\lambda_0}\left(\widetilde{\xbf}(\boldsymbol\zeta_{1,N}),\zeta_j'\right)}{N}\nonumber
\\= &  {\sum_{j:\,\zeta_j\neq \zeta_j'}}\frac{f_{\lambda_0}\left(\widetilde{\xbf}(\boldsymbol\zeta'_{1,N}),\zeta_{j}\right)-f_{\lambda_0}\left(\widetilde{\xbf}(\boldsymbol\zeta_{1,N}),\zeta_j\right)}{N} - {\sum_{j:\,\zeta_j\neq \zeta_j'}}\frac{f_{\lambda_0}\left(\widetilde{\xbf}(\boldsymbol\zeta'_{1,N}),\zeta_{j}'\right)-f_{\lambda_0}\left(\widetilde{\xbf}(\boldsymbol\zeta_{1,N}),\zeta_j'\right)}{N}\nonumber
\\&+\frac{1}{N}{\sum_{j=1}^N}f_{\lambda_0}\left(\widetilde{\xbf}(\boldsymbol\zeta'_{1,N}),\zeta_{j}'\right)-\frac{1}{N}{\sum_{j=1}^N} f_{\lambda_0}\left(\widetilde{\xbf}(\boldsymbol\zeta_{1,N}),\zeta_j'\right) \nonumber
\\\leq &  {\sum_{j:\,\zeta_j\neq \zeta_j'}}\frac{f_{\lambda_0}\left(\widetilde{\xbf}(\boldsymbol\zeta'_{1,N}),\zeta_{j}\right)-f_{\lambda_0}\left(\widetilde{\xbf}(\boldsymbol\zeta_{1,N}),\zeta_j\right)}{N} - {\sum_{j:\,\zeta_j\neq \zeta_j'}}\frac{f_{\lambda_0}\left(\widetilde{\xbf}(\boldsymbol\zeta'_{1,N}),\zeta_{j}'\right)-f_{\lambda_0}\left(\widetilde{\xbf}(\boldsymbol\zeta_{1,N}),\zeta_j'\right)}{N}\nonumber
\\&+\delta\Vert \widetilde{\mathbf x}(\boldsymbol\zeta_{1,N}')-\widetilde{\mathbf x}(\boldsymbol\zeta_{1,N})\Vert_{q'},\label{second row new M}
\end{align}
where  \eqref{second row new M} is due to the fact that $\widetilde{\xbf}(\boldsymbol\zeta'_{1,N})$ satisfies \eqref{suboptimality in computing SAA} and $q'\leq q$.
Invoking   
  Assumption \ref{assumption f lipschitz}.(a) (and noting that $q'\leq q$ again) and the Lipschitz condition of $V_{q'}$ (as shown in \eqref{Lipschitz inequality of Vq} of Appendix \ref{sec: preliminary} for completeness), we   further obtain
\begin{align}
\text{Eq.\,}\eqref{initial gap to bound and repli}\nonumber
 \leq &  \left[{\sum_{j=1}^N }\frac{  M(\zeta_j)+M(\zeta_j')+2\lambda_0 {\mathcal D_{q'}}   }{N }    \mathds{1}(\zeta_{j}'\neq \zeta_{j}) +\delta\right] \Vert \widetilde{\mathbf x}(\boldsymbol\zeta_{1,N}')-\widetilde{\mathbf x}(\boldsymbol\zeta_{1,N}) \Vert_{q'}\nonumber
\\\leq &\left[ \frac{ 2\cdot \left(t+\lambda_0 {\mathcal D_{q'}}\right) }{N }\cdot \sum_{j=1}^N   \mathds{1}(\zeta_{j}'\neq \zeta_{j})+\delta\right]\cdot \Vert\widetilde{\mathbf x}(\boldsymbol\zeta_{1,N}')-\widetilde{\mathbf x}(\boldsymbol\zeta_{1,N}) \Vert_{q'},
 \label{vital step new}
\end{align}
where the last inequality is due to the assumption that $ (\boldsymbol \zeta_{1,N}, \boldsymbol\zeta_{1,N}')\in\mathcal E_t^N\times \mathcal E_t^N$ with $\mathcal E_t^N$ defined as in \eqref{event total consideration}.

Note that $V_{q'}(\xbf):=\frac{1}{2} \Vert\xbf-\xbf^0\Vert_{q'}^2$  with $q'\in(1,2]$ is $(q'-1)$-strongly convex w.r.t. the $q'$-norm \citep{ben2001ordered} in the sense of \eqref{SC V} in Appendix \ref{sec: preliminary}.
Thus, if we additionally invoke Assumption \ref{GC condition constant all} and the definition of $\widetilde{\xbf}(\cdot)$ as in \eqref{define solution} (as well as the fact that $q'\leq q$), we have 
\begin{align}
& \frac{1}{N}\sum_{j=1}^Nf_{\lambda_0}\left(\widetilde{\xbf}(\boldsymbol\zeta'_{1,N}),\zeta_j\right)-\frac{1}{N}\sum_{j=1}^Nf_{\lambda_0}\left(\widetilde{\xbf}(\boldsymbol\zeta_{1,N}),\zeta_j\right)\geq  \lambda_0\cdot \frac{q'-1}{2}\left\Vert\widetilde{\mathbf x}(\boldsymbol\zeta_{1,N}')-\widetilde{\mathbf x}(\boldsymbol\zeta_{1,N}) \right\Vert_{q'}^2-\delta\cdot \Vert \widetilde{\mathbf x}(\boldsymbol\zeta_{1,N}')-\widetilde{\mathbf x}(\boldsymbol\zeta_{1,N}) \Vert_{q'}. \nonumber
\end{align}
Combining this with \eqref{vital step new}, we  obtain  
$
\left\Vert \widetilde{\mathbf x}(\boldsymbol\zeta_{1,N}')-\widetilde{\mathbf x}(\boldsymbol\zeta_{1,N})\right\Vert_{q'}\nonumber
\leq   \frac{4\cdot (t +\lambda_0 {\mathcal D_{q'}})}{N\cdot \lambda_0\cdot (q'-1)} \cdot \sum_{j=1}^N   \mathds{1}(\zeta_{j}'\neq \zeta_{j})+\frac{4\delta}{\lambda_0\cdot (q'-1)}.
$  
Further invoking Assumption \ref{assumption f lipschitz}.(a) as well as \eqref{Lipschitz inequality of Vq}, for $\zeta\in\mathcal E_t$, we have
\begin{align}
&\vert f_{\lambda_0}(\widetilde{\xbf}(\boldsymbol\zeta_{1,N}),\zeta)-f_{\lambda_0}(\widetilde{\xbf}(\boldsymbol\zeta'_{1,N}),\zeta)\vert\leq  (t+\lambda_0\mathcal D_{q'})\cdot \left\Vert \widetilde{\mathbf x}(\boldsymbol\zeta_{1,N}')-\widetilde{\mathbf x}(\boldsymbol\zeta_{1,N})\right\Vert_{q'}\nonumber
\\\leq &  \frac{4\cdot (t+\lambda_0 \mathcal D_{q'})^2}{N\cdot \lambda_0\cdot (q'-1)}\cdot \sum_{j=1}^N\mathds{1}(\zeta_{j}'\neq \zeta_{j}) 
+\frac{4\delta\cdot (t+\lambda_0\mathcal D_{q'})}{\lambda_0\cdot (q'-1)}=\gamma\sum_{j=1}^N\mathds{1}(\zeta_{j}'\neq \zeta_{j})+\varkappa_\delta=d_{\gamma}(\boldsymbol\zeta_{1,N}',\boldsymbol\zeta_{1,N})+\varkappa_\delta.\label{where define gamma}
\end{align}

In using the above to prove the desired results,  we consider two complementary cases: $\zeta\notin\mathcal E_t$ or $\zeta\in\mathcal E_t$ below:

{\bf Case 1.} We first observe that the desired inequalities in \eqref{reformed g} hold trivially for the case of $\zeta\notin\mathcal E_t$ by invoking the definition of $g$ in \eqref{constructed new function}. 

{\bf Case 2.} We  consider the   case of $\zeta\in\mathcal E_t$ below.
Invoking \eqref{where define gamma} implies that $f_{\lambda_0}(\widetilde{\xbf}(\boldsymbol z),\zeta)+d_{\gamma}(\boldsymbol\zeta_{1,N},\boldsymbol z)\geq f_{\lambda_0}(\widetilde{\xbf}(\boldsymbol\zeta_{1,N}),\zeta)-\varkappa_\delta$ for any $(\boldsymbol\zeta_{1,N},\boldsymbol z)\in \mathcal E_t^N\times \mathcal E_t^N$. Thus, it holds for  all $\boldsymbol\zeta_{1,N} \in\mathcal E_t^N$  that \begin{align}
&f_{\lambda_0}(\widetilde{\xbf}(\boldsymbol \zeta_{1,N}),\zeta)=f_{\lambda_0}(\widetilde{\xbf}(\boldsymbol \zeta_{1,N}),\zeta)+d_\gamma(\boldsymbol \zeta_{1,N},\boldsymbol \zeta_{1,N})
\geq\inf_{\boldsymbol z\in\mathcal E^N_t}\left\{f_{\lambda_0}(\widetilde{\xbf}(\boldsymbol z),\zeta)+d_{\gamma}(\boldsymbol\zeta_{1,N},\boldsymbol z)\right\}\geq f_{\lambda_0}(\widetilde{\xbf}(\boldsymbol \zeta_{1,N}),\zeta)-\varkappa_\delta.\nonumber
\end{align}
As an immediate result,
\begin{align}
&
\mathds{1}(\zeta\in\mathcal E_t)\cdot\left[ f_{\lambda_0}(\widetilde{\xbf}(\boldsymbol \zeta_{1,N}),\zeta)+\mathcal D_{q'}\cdot (\lambda_0\mathcal D_{q'}+t)-f_{\lambda_0}(\xbf^*,\zeta)\right]\nonumber
\\ \geq&  \inf_{\boldsymbol z\in\mathcal E^N_t}\left\{\mathds{1}(\zeta\in\mathcal E_t)\cdot\left[f_{\lambda_0}(\widetilde{\xbf}(\boldsymbol z),\zeta)+d_{\gamma}(\boldsymbol\zeta_{1,N},\boldsymbol z)+\mathcal D_{q'}\cdot (\lambda_0\mathcal D_{q'}+t)-f_{\lambda_0}(\xbf^*,\zeta)\right]\right\} \nonumber
\\ \geq &\mathds{1}(\zeta\in\mathcal E_t)\cdot\left[ f_{\lambda_0}(\widetilde{\xbf}(\boldsymbol \zeta_{1,N}),\zeta)+\mathcal D_{q'}\cdot (\lambda_0\mathcal D_{q'}+t)-f_{\lambda_0}(\xbf^*,\zeta)\right]-\varkappa_\delta,~~\forall \boldsymbol\zeta_{1,N}\in\mathcal E_t^N.\label{convert inf}
\end{align}

Meanwhile, in the same case of $\zeta\in\mathcal E_t$, Assumption \ref{assumption f lipschitz}.(a) together with \eqref{Lipschitz inequality of Vq} implies that,  for every $\xbf\in\X$, we have $\vert f_{\lambda_0}(\xbf,\zeta)-f_{\lambda_0}(\xbf^*,\zeta)\vert \leq (t+\lambda_0\mathcal D_{q'})\cdot \Vert\xbf-\xbf^*\Vert_{q'}\leq (t+\lambda_0\mathcal D_{q'})\cdot\mathcal D_{q'}$. As an immediate result,  $2(t+\lambda_0\mathcal D_{q'})\cdot\mathcal D_{q'}\geq f_{\lambda_0}(\xbf,\zeta)+(t+\lambda_0\mathcal D_{q'})\cdot \mathcal D_{q'}-f_{\lambda_0}(\xbf^*,\zeta)$. This immediately leads to
\begin{align}
2(t+\lambda_0\mathcal D_{q'})\cdot\mathcal D_{q'}\geq \mathds{1}(\zeta\in\mathcal E_t)\cdot\left[f_{\lambda_0}(\xbf,\zeta)+(t+\lambda_0\mathcal D_{q'})\cdot \mathcal D_{q'}-f_{\lambda_0}(\xbf^*,\zeta)\right],~~\forall (\xbf,\zeta)\in\X\times \mathcal E_t.\label{conver min}
\end{align}
Therefore,   we may continue from \eqref{convert inf} to obtain, for any $\boldsymbol\zeta_{1,N}\in\mathcal E_t^N$, 
\begin{align}
&\mathds{1}(\zeta\in\mathcal E_t)\cdot\left[ f_{\lambda_0}(\widetilde{\xbf}(\boldsymbol \zeta_{1,N}),\zeta)+\mathcal D_{q'}\cdot (\lambda_0\mathcal D_{q'}+t)-f_{\lambda_0}(\xbf^*,\zeta)\right]\label{first line here}
\\\stackrel{\eqref{conver min}}{=}&\min\left\{\vphantom{V^{V^{V^V}}}2\mathcal D_{q'}\cdot (\lambda_0\mathcal D_{q'}+t),~\mathds{1}(\zeta\in\mathcal E_t)\cdot\left[ f_{\lambda_0}(\widetilde{\xbf}(\boldsymbol \zeta_{1,N}),\zeta)+\mathcal D_{q'}\cdot (\lambda_0\mathcal D_{q'}+t)-f_{\lambda_0}(\xbf^*,\zeta)\right]\right\}\nonumber
\\
\geq~&\min\left\{2\mathcal D_{q'}\cdot (\lambda_0\mathcal D_{q'}+t),~\vphantom{V^{V^{V^{V^{V^V}}}}_{V_{V_V}}}\right.\nonumber
\\&\left.\vphantom{V^{V^{V^V}}_{V_{V_V}}} \inf_{\boldsymbol z\in\mathcal E^N_t}\left\{\mathds{1}(\zeta\in\mathcal E_t) \cdot\left[f_{\lambda_0}(\widetilde{\xbf}(\boldsymbol z),\zeta)+d_\gamma(\boldsymbol\zeta_{1,N},\boldsymbol z)+\mathcal D_{q'}\cdot (\lambda_0\mathcal D_{q'}+t)-f_{\lambda_0}(\xbf^*,\zeta) \right]\vphantom{V^{V^V}_{V_V}}\right\} \vphantom{V^{V^{V^V}}_{V_{V_V}}}\right\}\stackrel{\eqref{constructed new function}}{=}g(\boldsymbol\zeta_{1,N},\zeta)\label{mid line here}
\\
\stackrel{\eqref{convert inf}}{\geq}&\min\left\{2\mathcal D_{q'}\cdot (\lambda_0\mathcal D_{q'}+t),\vphantom{V^{V^{V^V}}_{V_{V_V}}}~ \mathds{1}(\zeta\in\mathcal E_t) \cdot \left[f_{\lambda_0}(\widetilde{\xbf}(\boldsymbol\zeta_{1,N}),\zeta)+\mathcal D_{q'}\cdot (\lambda_0\mathcal D_{q'}+t)-f_{\lambda_0}(\xbf^*,\zeta) \vphantom{V^{V^V}_{V_V}}\right] \vphantom{V^{V^{V^V}}_{V_{V_V}}}-\varkappa_\delta\right\}\nonumber
\\
 {\geq}~&\min\left\{2\mathcal D_{q'}\cdot (\lambda_0\mathcal D_{q'}+t),\vphantom{V^{V^{V^V}}_{V_{V_V}}}~ \mathds{1}(\zeta\in\mathcal E_t) \cdot \left[f_{\lambda_0}(\widetilde{\xbf}(\boldsymbol\zeta_{1,N}),\zeta)+\mathcal D_{q'}\cdot (\lambda_0\mathcal D_{q'}+t)-f_{\lambda_0}(\xbf^*,\zeta) \vphantom{V^{V^V}_{V_V}}\right] \vphantom{V^{V^{V^V}}_{V_{V_V}}}\right\}-\varkappa_\delta\nonumber
\\\stackrel{\eqref{conver min}}{=}&  \mathds{1}(\zeta\in\mathcal E_t) \cdot \left[f_{\lambda_0}(\widetilde{\xbf}(\boldsymbol\zeta_{1,N}),\zeta)+\mathcal D_{q'}\cdot (\lambda_0\mathcal D_{q'}+t)-f_{\lambda_0}(\xbf^*,\zeta) \vphantom{V^{V^V}_{V_V}}\right]-\varkappa_\delta,\label{last line here}
\end{align}
Joining \eqref{first line here}, \eqref{mid line here}, and \eqref{last line here}, we immediately have, for any $\boldsymbol\zeta_{1,N}\in\mathcal E_t^N$
\begin{align*}
-\varkappa_\delta\leq g(\boldsymbol\zeta_{1,N},\zeta) -\mathds{1}(\zeta\in\mathcal E_t) \cdot \left\{f_{\lambda_0}(\widetilde{\xbf}(\boldsymbol\zeta_{1,N}),\zeta)+\mathcal D_{q'}\cdot (\lambda_0\mathcal D_{q'}+t)-f_{\lambda_0}(\xbf^*,\zeta) \vphantom{V^{V^V}_{V_V}}\right\}\leq 0.
\end{align*}
Combining the said two cases, we have the desired result of this lemma.
\end{proof}
}
\bigskip

Below we recall the notation that  $\xi$ denotes a random vector  with $\Prob[\xi\in\Xi]=1$, the sample $\xi_1,\ldots,\xi_N$ are i.i.d.\ copies of $\xi$, and, for $\xi_j'\in\Xi$ and $j=1,...,N$,
\[
\boldsymbol\xi_{1,N}=(\xi_1,\ldots,\xi_N),\qquad 
\boldsymbol\xi^{(j)}_{1,N}=(\xi_1,\ldots,\xi_{j-1},\xi_j',\xi_{j+1},\ldots,\xi_N).
\]

\begin{proposition}\label{very important generalization proposition}
Let $\beta\in(0,1)$, $U>0$,  and $A:\Xi^N\times\Xi\to[0,U]$ be a deterministic function that is  measurable   w.r.t.\,the $P^{N+1}$-completion of $\mathcal B(\Xi^N\times\Xi)$. Assume that, for some $\gamma\ge0$,
\begin{equation}\label{bounded difference}
\big|A(\boldsymbol\xi_{1,N},\xi)-A(\boldsymbol\xi^{(j)}_{1,N},\xi)\big|\le\gamma,
\quad \forall\,(\boldsymbol\xi_{1,N},\xi_j',\xi)\in\Xi^N\times\Xi\times\Xi,\;\; j=1,\ldots,N.
\end{equation}
Then there exists a universal constant $c>0$ such that
\[
\Prob\!\left[
\left|\,
\E_{\xi}\!\big[A(\boldsymbol\xi_{1,N},\xi)\big]
-\frac{1}{N}\sum_{j=1}^N A(\boldsymbol\xi_{1,N},\xi_j)
\right|
\;\ge\;
c\!\left(\gamma\ln N\cdot\ln\frac{N}{\beta}
+U\sqrt{\frac{\ln(1/\beta)}{N}}\right)
\right]\;\le\;\beta.
\]
\end{proposition}
\begin{proof} 
This is a straightforward result of Theorem 1.1 by \citet{feldman2019high}.  More specifically, let $A' :\Xi^N\times \Xi\rightarrow [0,1]$ be defined as $A':= U^{-1}\cdot A$. By assumption, it holds that  
$
\vert A'(\boldsymbol\xi_{1,N},\xi)-A'(\boldsymbol\xi^{(j)}_{1,N},\xi)\vert \leq \gamma/U,~~\forall (\boldsymbol\xi_{1,N},\xi'_j,\xi)\in\Xi^N\times \Xi\times \Xi,~~j=1,...,N.$
Invoking Theorem 1.1 in \citet{feldman2019high} (where  $A$ and $\gamma$ therein are specified as $A'$ and $\gamma/U$, respectively), we have
\begin{align*}
\beta\geq &\,\Prob\left[\left\vert \E_{\xi}[A(\boldsymbol\xi_{1,N},\xi)]-\frac{1}{N}\sum_{j=1}^N  A(\boldsymbol\xi_{1,N},\xi_j)\right\vert\geq  c\left(\gamma\ln N  \cdot \ln \frac{N}{\beta}+ U\cdot\sqrt{\frac{\ln(1/\beta)}{N} }\right)\right]
\\=&\,\Prob\left[U\cdot \left\vert \E_{\xi}[A'(\boldsymbol\xi_{1,N},\xi)]-\frac{1}{N}\sum_{j=1}^N  A'(\boldsymbol\xi_{1,N},\xi_j)\right\vert\geq  c\cdot U\left(\frac{\gamma}{U}\ln N  \cdot \ln \frac{N}{\beta}+ \sqrt{\frac{\ln(1/\beta)}{N} }\right)\right],
\end{align*}
which immediately leads to the desired result. 
\end{proof}

\begin{lemma}\label{useful lemma 2}
 Let $p \in[2,\infty)$. Denote by $\xi_1,...,\xi_N {\in\R}$ an i.i.d. sequence of random variables with $\E[\xi_1]=0$. Then     $\left\Vert N^{-1}\sum_{j=1}^N \xi_j\right\Vert_{L^p}\leq C\cdot \sqrt{p N^{-1} }\cdot \left\Vert \xi_1 \right\Vert_{L^{p}}$ for some universal constant $C>0$.
 \end{lemma}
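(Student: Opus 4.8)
\textbf{Proof proposal for Lemma \ref{useful lemma 2}.}

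The plan is to invoke the Marcinkiewicz--Zygmund inequality together with a symmetrization-free direct bound on the moments of a sum of independent mean-zero random variables; the key ``$\sqrt{p}$'' factor is the content of Rosenthal-type moment inequalities with explicit constants. First I would recall the Marcinkiewicz--Zygmund inequality: for independent mean-zero random variables $\xi_1,\dots,\xi_N$ and $p \geq 2$, one has
\begin{align}
\left\Vert \sum_{j=1}^N \xi_j \right\Vert_{L^p} \leq B_p \cdot \left\Vert \left( \sum_{j=1}^N \xi_j^2 \right)^{1/2} \right\Vert_{L^p},\nonumber
\end{align}
where $B_p$ is a universal constant of order $O(\sqrt{p})$ (this sharp order of the constant is classical, going back to work making the dependence on $p$ explicit, e.g.\ via the best constant in Khintchine/Burkholder inequalities). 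Then I would bound the right-hand side: since $p/2 \geq 1$, the triangle inequality in $L^{p/2}$ applied to the random variables $\xi_j^2$ gives
\begin{align}
\left\Vert \sum_{j=1}^N \xi_j^2 \right\Vert_{L^{p/2}} \leq \sum_{j=1}^N \left\Vert \xi_j^2 \right\Vert_{L^{p/2}} = N \cdot \left\Vert \xi_1 \right\Vert_{L^p}^2,\nonumber
\end{align}
using that the $\xi_j$ are identically distributed, and that $\Vert \xi_j^2 \Vert_{L^{p/2}} = \Vert \xi_j \Vert_{L^p}^2$. Taking square roots, $\Vert ( \sum_j \xi_j^2 )^{1/2} \Vert_{L^p} \leq \sqrt{N}\cdot \Vert \xi_1 \Vert_{L^p}$.

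Combining the two displays yields $\Vert \sum_{j=1}^N \xi_j \Vert_{L^p} \leq B_p \sqrt{N} \Vert \xi_1 \Vert_{L^p} \leq C \sqrt{p}\sqrt{N}\,\Vert \xi_1 \Vert_{L^p}$ for a universal $C$. Dividing by $N$ gives $\Vert N^{-1} \sum_{j=1}^N \xi_j \Vert_{L^p} \leq C \sqrt{p/N}\,\Vert \xi_1 \Vert_{L^p}$, which is exactly the claim. An alternative route, should one wish to avoid citing the $O(\sqrt p)$ constant in Marcinkiewicz--Zygmund directly, is to go through Rosenthal's inequality: $\Vert \sum_j \xi_j \Vert_{L^p} \leq C\sqrt{p}\,( \sum_j \E \xi_j^2 )^{1/2} + Cp\, ( \sum_j \E |\xi_j|^p )^{1/p}$; here the first term is $C\sqrt{p}\sqrt{N}\,\Vert\xi_1\Vert_{L^2} \leq C\sqrt{p}\sqrt{N}\,\Vert\xi_1\Vert_{L^p}$ and the second is $Cp N^{1/p}\Vert\xi_1\Vert_{L^p}$, which is dominated by $C\sqrt{p}\sqrt{N}\,\Vert\xi_1\Vert_{L^p}$ whenever $p \leq c N$ for a suitable constant; for $p$ large relative to $N$ the bound is anyway trivial to re-derive, or one simply notes that only the regime $p = O(\ln(1/\beta)) = O(\ln N)$ is used in the applications.

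The main obstacle is pinning down the correct $\sqrt{p}$ dependence of the constant, rather than a bound that merely grows polynomially in $p$ — a naive application of Minkowski or a crude Rosenthal constant would give $O(p)$, which is not good enough for the downstream claims (e.g.\ the way Lemma \ref{useful lemma 2} feeds into \eqref{test new here useful} and ultimately into the $\beta^{-2/p}$ rates of Theorem \ref{thm: first main theorem}). So the argument must either cite a version of the Marcinkiewicz--Zygmund / Burkholder--Davis--Gundy inequality in which the constant is known to be $O(\sqrt{p})$, or carry out the Rosenthal-type dichotomy above to separate the ``Gaussian-like'' regime from the ``heavy-tail'' regime and verify the $\sqrt{p}$ term dominates in the range of $p$ of interest. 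Everything else — the $L^{p/2}$ triangle inequality, the identical-distribution reduction, the final division by $N$ — is routine.
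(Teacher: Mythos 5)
Your main argument is exactly the paper's proof: apply the Marcinkiewicz--Zygmund inequality with its $O(\sqrt{p})$ constant, bound $\Vert\sum_j\xi_j^2\Vert_{L^{p/2}}$ by the triangle inequality in $L^{p/2}$ using $\Vert\xi_j^2\Vert_{L^{p/2}}=\Vert\xi_j\Vert_{L^p}^2$ and identical distribution, then divide by $N$. The extra discussion of the Rosenthal-route alternative and the sharpness of the $\sqrt{p}$ constant is sound but not needed; the proof is correct and matches the paper's.
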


  \begin{proof}
 This lemma is largely based on the approach used in the proof of Proposition 1 by \cite{oliveira2023sample}. By Marcinkiewicz' inequality \citep{boucheron2013concentration}, for some universal constant $C>0$, it holds that
$
\left\Vert N^{-1}\sum_{j=1}^N \xi_j\right\Vert_{L^p}\leq  N^{-1}C\cdot \sqrt{p}\left\Vert \sum_{j=1}^N \xi_j^2\right\Vert_{L^{p/2}}^{1/2}\leq N^{-1}C\cdot \sqrt{p}\sqrt{\sum_{j=1}^N \left\Vert  \xi_j^2\right\Vert_{L^{p/2}}}\nonumber=C\cdot \sqrt{p\cdot N^{-1} }\cdot \left\Vert \xi_1 \right\Vert_{L^{p}},\nonumber
$
as desired.  
 \end{proof}
 \bigskip
 
\begin{lemma}\label{useful lemma 2 tail bound}
 Let $p \in[2,\infty)$. Denote by $\vbf_1,...,\vbf_N {\in\R^d}$ an i.i.d. sequence of $d$-dimensional random vectors with $\mathbb{E}[\vbf_1] = \mathbf 0$. For any $t>0$, it holds that
 $
 \Prob\left[\left\Vert N^{-1}\sum_{j=1}^N \vbf_j\right\Vert_{p}^2\geq t\right]\leq \left(\widetilde C\cdot \left\Vert \vbf_1 \right\Vert_{L^{p}}\cdot \sqrt{\frac{p }{Nt}} \right)^p,
 $
 for some universal constant $\widetilde C>0$.
 \end{lemma}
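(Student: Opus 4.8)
The plan is to deduce Lemma~\ref{useful lemma 2 tail bound} directly from the moment bound in Lemma~\ref{useful lemma 2} by applying a high-order Markov (Chebyshev-type) inequality. First I would reduce the statement about $\left\Vert N^{-1}\sum_{j=1}^N \vbf_j\right\Vert_p^2$ to a statement about a scalar random variable: set $S:=\left\Vert N^{-1}\sum_{j=1}^N \vbf_j\right\Vert_p$, which is non-negative, so that $\Prob[S^2\geq t]=\Prob[S\geq \sqrt t]=\Prob[S^p\geq t^{p/2}]$. By Markov's inequality applied to $S^p$, this is at most $t^{-p/2}\cdot \E[S^p] = t^{-p/2}\cdot \left\Vert N^{-1}\sum_{j=1}^N \vbf_j\right\Vert_{L^p}^p$, recalling the definition of the $L^p$-norm of a random vector from Section~\ref{sec: notations}.

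The next step is to control $\left\Vert N^{-1}\sum_{j=1}^N \vbf_j\right\Vert_{L^p}$ componentwise and invoke Lemma~\ref{useful lemma 2}. Writing $\vbf_j = (v_{j,i}: i=1,\dots,d)$, each coordinate sequence $(v_{j,i})_{j=1}^N$ is i.i.d.\ with mean zero, so Lemma~\ref{useful lemma 2} gives $\left\Vert N^{-1}\sum_{j=1}^N v_{j,i}\right\Vert_{L^p}\leq C\sqrt{pN^{-1}}\cdot \left\Vert v_{1,i}\right\Vert_{L^p}$ for each $i$. Raising to the $p$th power and summing over $i$ yields
\begin{align}
\left\Vert N^{-1}\sum_{j=1}^N \vbf_j\right\Vert_{L^p}^p = \sum_{i=1}^d\left\Vert N^{-1}\sum_{j=1}^N v_{j,i}\right\Vert_{L^p}^p \leq \left(C\sqrt{pN^{-1}}\right)^p\sum_{i=1}^d\left\Vert v_{1,i}\right\Vert_{L^p}^p = \left(C\sqrt{pN^{-1}}\right)^p\left\Vert \vbf_1\right\Vert_{L^p}^p.\nonumber
\end{align}
Combining this with the Markov bound from the previous paragraph gives $\Prob[S^2\geq t]\leq t^{-p/2}\left(C\sqrt{pN^{-1}}\right)^p\left\Vert \vbf_1\right\Vert_{L^p}^p = \left(\widetilde C\left\Vert \vbf_1\right\Vert_{L^p}\sqrt{p/(Nt)}\right)^p$ with $\widetilde C = C$, which is exactly the claim.

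I do not anticipate a genuine obstacle here, since the result is essentially a packaging of Lemma~\ref{useful lemma 2} together with the elementary observation that $\ell^p$-norm raised to the $p$th power is additive over coordinates. The only point requiring a modicum of care is the interchange of ``norm of the vector of $L^p$-norms'' with ``$L^p$-norm of the norm of the vector'': one must use that $\left\Vert \vbf\right\Vert_{L^p}^p = \sum_i \E[\vert v_i\vert^p] = \sum_i \left\Vert v_i\right\Vert_{L^p}^p$ holds exactly (by Fubini/linearity of expectation), not merely up to a constant, so no extra dimensional factor creeps in. A secondary minor point is keeping track of whether one wants $\Prob[\Vert\cdot\Vert_p \ge \sqrt t]$ or $\Prob[\Vert\cdot\Vert_p^p \ge t^{p/2}]$; these coincide because the map $x\mapsto x^p$ is strictly increasing on $[0,\infty)$, so the event reformulation is harmless. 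Everything else is a direct substitution, and the universal constant $\widetilde C$ can simply be taken equal to the constant $C$ from Lemma~\ref{useful lemma 2}.
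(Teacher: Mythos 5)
Your proposal is correct and follows essentially the same route as the paper: apply Markov's inequality to $\bigl\Vert N^{-1}\sum_j \vbf_j\bigr\Vert_p^p$ (equivalently to the event $\sum_i\bigl\vert N^{-1}\sum_j v_{ij}\bigr\vert^p\geq t^{p/2}$), rewrite the resulting expectation as the sum of coordinatewise $L^p$-norms raised to the $p$th power, and bound each coordinate via Lemma~\ref{useful lemma 2}. The only difference is cosmetic (your explicit introduction of the scalar $S$), and your attention to the exact additivity $\Vert\vbf\Vert_{L^p}^p=\sum_i\Vert v_i\Vert_{L^p}^p$ is precisely the point the paper's computation relies on.
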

\begin{proof} 
The proof slightly strengthens that of Proposition 1 by \cite{oliveira2023sample} and is a quick result of the Markov's inequality. Let $v_{ij}$ be the $i$th component of $\vbf_j$. Then, it holds, for any $t>0$, that
\begin{align}
&\,\Prob\left[\left\Vert N^{-1}\sum_{j=1}^N \vbf_j\right\Vert_{p}^2\geq t\right]=\Prob\left[\sum_{i=1}^d \left\vert N^{-1}\sum_{j=1}^N v_{ij}\right\vert^p \geq t^{p/2}\right]\nonumber
\\\stackrel{\text{Markov's}}\leq &\, \frac{\E\left[\sum_{i=1}^d \left\vert N^{-1}\sum_{j=1}^N v_{ij}\right\vert^p\right]}{t^{p/2}}=\frac{\sum_{i=1}^d\left(\left\Vert N^{-1}\sum_{j=1}^N v_{ij}\right\Vert_{L^p}\right)^p}{t^{p/2}}
\stackrel{\text{Lemma \ref{useful lemma 2}}}\leq \,\sum_{i=1}^d\left(\widetilde C\cdot \sqrt{\frac{p}{Nt}}\Vert v_{i1}\Vert_{L^p}\right)^p\nonumber
\\=&\,\left(\widetilde C\cdot \sqrt{\frac{p}{Nt}}\right)^p\cdot \sum_{i=1}^d\Vert v_{i1}\Vert_{L^p}^p=\left(\widetilde C\cdot \sqrt{\frac{p}{Nt}}\right)^p\cdot \Vert \vbf_{1}\Vert_{L^p}^p, \nonumber
\end{align}
for some universal constant $\widetilde C>0$, which  immediately leads to the desired result.   
\end{proof}

\section{Proof of Corollary \ref{most explicit}}\label{proof of corollary 1}
{\it [To show Part (a)]:} Observe that, by convexity, $f(\xbf_1,\xi)-f(\xbf_2,\xi)\leq \langle \mathbf g,\xbf_1-\xbf_2\rangle\leq  \Vert \mathbf g\Vert_p\cdot \Vert \xbf_1-\xbf_2\Vert_{q'}$ for every $\mathbf g\in\partial_{\xbf}f(\xbf_1,\xi)$ for   $p:\,2\leq p\leq q'/(q'-1)$ and all $\xbf_1,\xbf_2\in\X$. Furthermore, in view of the assumption that, for all $\xbf\in\X$ and every $\xi\in\Xi$, it holds that $ \Vert\gbf\Vert_{p}\leq \widetilde M_{p}(\xi)$ for some $\gbf\in\partial_{\xbf} f(\xbf,\xi)$, we obtain $f(\xbf_1,\xi)-f(\xbf_2,\xi)\leq  \widetilde M_{p}(\xi) \cdot \Vert \xbf_1-\xbf_2\Vert_{q'}$. Further invoking the assumption that $\Prob\Big[\widetilde M_{p}(\xi)\leq t\Big]\geq 1-2\exp(-t^2/\varphi_{f,p}^2)$ for all $t\geq 0$, we know that Assumption \ref{assumption f lipschitz v3} holds with the $q'$-norm and  $M:=\widetilde M_{p}$. We may invoke Theorem \ref{Most explicit result here}(b)  to obtain the desired result in Part (a).
\smallskip
\smallskip
\sloppy

\noindent{\it [To show Part (b)]:} We would like to first show that,  for a universal constant $C'\geq 2$,  it holds that $\Prob\Big[\max_{i=1,...,d} \widetilde M_{i}(\xi) \leq t\Big]\geq 1-2\exp(-t^2/(C'\varphi_{f,\infty}^2\ln d)\big)$ for all $t\geq 0$. To that end,  consider two cases. First,  when $t\leq \varphi_{f,\infty}\sqrt{\ln (2d)}$,   it holds that $1-2\exp(-t^2/(C'\varphi_{f,\infty}^2\ln d)\big)\leq 0\leq \Prob\Big[\max_{i=1,...,d} \widetilde M_{i}(\xi) \leq t\Big]$. 

\sloppy
Second, in the complement case of $t> \varphi_{f,\infty}\sqrt{\ln (2d)}$, it holds that $t^2/\varphi^2_{f,\infty}\geq \ln 2d\geq \ln 2$. Examine a function $h(s):=(s-\ln d)-\frac{s}{C'\ln d}$. Since $C'\ln d>1$, we have $h'(s)=1-(C'\ln d)^{-1}>0$ and thus $h(s)$ is increasing in $s$. Now that $h\big(\ln(2d)\big)=\ln(2d)-\ln d-\frac{\ln(2d)}{C'\ln d}=\ln(2)-\frac{\ln(2d)}{C'\ln d}\geq 0$, we have $h(s)\geq 0$ for all  $s>\ln (2d)$.  
Therefore, $h(t^2/\varphi^2_{f,\infty})\geq 0$, which implies that 
\begin{align}t^2/\varphi^2_{f,\infty}-\ln d\geq t^2/(C'\cdot \varphi^2_{f,\infty}\ln d).\label{useful bridge}
\end{align} Under the assumption  that   $\Prob\Big[ \widetilde M_i(\xi)\leq t\Big]\geq 1-2\exp(-t^2/\varphi_{f,\infty}^2)$ for every $i=1,...,d$ and all $t\geq 0$, we may invoke the union bound to obtain $\Prob\Big[\max_{i=1,...,d}\widetilde M_i(\xi) \leq t\Big]\geq 1-2d\exp(-t^2/ \varphi_{f,\infty}^2 \big)=1-2\exp(-t^2/ \varphi_{f,\infty}^2 +\ln d\big)$. Joining this with \eqref{useful bridge} then implies that  $\Prob\Big[\max_{i=1,...,d}\widetilde M_i(\xi) \leq t\Big]\geq  1-2\exp\big(-t^2/ \varphi_{f,\infty}^2 +\ln d\big)\geq 1-2\exp(-t^2/(C'\varphi_{f,\infty}^2\ln d)\big)$ for all $t> \varphi_{f,\infty}\sqrt{\ln (2d)}$. Let $\widetilde{\mathbf M}(\xi)=\big(\widetilde M_i(\xi):\,i=1,...,d\big)$. Combining the two cases above, we have that $\Prob\Big[ \Vert \widetilde{\mathbf M}(\xi) \Vert_{\ln d} \leq  e t\Big]\geq \Prob\Big[\max_{i=1,...,d}\widetilde M_i(\xi) \leq t\Big]\geq 1-2\exp(-t^2/(C'\varphi_{f,\infty}^2\ln d)\big)$ for all $t\geq 0$. Observe that by convexity and by our assumption, $ f(\xbf_1,\xi)-f(\xbf_1+r\hat{\textnormal{e}}_i,\xi) \leq g_i\cdot r\leq \vert g_i\vert \cdot \vert r\vert\leq \widetilde M_i(\xi) \cdot \vert r\vert$ for any $g_i\in\partial_{x_i} f(\xbf_1,\xi)$ and $r\in \R$, for every $\xbf_1\in\X$ and $\xi\in\Xi$, where $\hat{\textnormal{e}}_i$ is the $i$th standard basis. Thus, $ \vert f(\xbf_1,\xi)-f(\xbf_2,\xi)\vert\leq  \max_{i=1,...,d} \widetilde M_i(\xi)\cdot \Vert\xbf_1-\xbf_2\Vert_1\leq  e \cdot \Vert \widetilde{\mathbf M}(\xi)\Vert_{\ln d}\cdot \Vert\xbf_1-\xbf_2\Vert_{q'} $, for all $\xbf_1,\xbf_2\in\X$, with $q'=1+(\ln d-1)^{-1}$. 
 Then,  we may invoke Theorem \ref{Most explicit result here}(b) with $M(\xi):=e\Vert \widetilde{\mathbf M}(\xi)\Vert_{\ln d}$,  $p=\ln d$, $q'=1+(\ln d-1)^{-1}$ (which indicates that $(q'-1)^{-1}\leq \ln d$),  $\varphi_{f,p}=\varphi_{f,\ln d}\leq \sqrt{C'\varphi_{f,\infty}^2\ln d}$, as well as   $R^*=\max\{1,\mathcal D_{q'}^2\}\leq    \mathcal D_1^2 $, implies the desired result in Part (b).\hfill\scalebox{0.7}{$\blacksquare$}

 \section{Some useful properties of $V_{q'}$ in Eq \eqref{Eq: SAA-ell2}}\label{sec: preliminary} We   discuss some properties of $V_{q'}$ for $q'\in(1,2]$  in this section. Note that this function
is   differentiable and $({q'}-1)$-strongly convex      w.r.t.\ the $q'$-norm, according to  \cite{ben2001ordered}.  Therefore, 
\begin{align}
V_{q'}(\xbf_1)-V_{q'}(\xbf_2)-\langle\nabla V_{q'}(\xbf_2),\, \xbf_1-\xbf_2\rangle
\geq \frac{{q'}-1}{2}\Vert \xbf_1-\xbf_2\Vert_{q'}^2,~~\forall\,\xbf_1,\,\xbf_2\in\X.\label{SC V}
\end{align}
 As a result of \eqref{SC V}, when   $F_N(\cdot)$ is  convex,   $F_{\lambda_0,N}(\cdot)$ is also  $[\lambda_0\cdot (q'-1)]$-strongly convex w.r.t.\,the same norm.  

Another important property of  $V_{q'}$ is its Lipschitz continuity under mild conditions. To see this, one may observe that, for any $(\xbf,\,\xbf^0)\in\X^2$,   the below holds with    $\varrho=q'/(q'-1)$:
\begin{align}\Vert \nabla V_{q'}(\xbf)\Vert_\varrho=&\,\Vert \xbf-\xbf^0\Vert_{q'}^{2-q'}\left(\sum_{i=1}^d \left( {\vert x_i-x_i^0\vert}\right)^{(q'-1)\varrho}\right)^{1/\varrho} 
=\,\Vert \xbf-\xbf^0\Vert_{q'},\label{property of Vq}
\end{align}
Thus, when the $q'$-norm diameter of the feasible region is bounded by $\mathcal D_{q'}$, we can tell that $V_{q'}$ is Lipschitz continuous in the following sense:   
\begin{align}
\vert V_{q'}(\xbf)-V_{q'}(\ybf)\vert\leq \mathcal D_{q'}\cdot \Vert \xbf-\ybf\Vert_{q'},~~\forall \xbf,\ybf\in\X.\label{Lipschitz inequality of Vq}
\end{align}

\section{Comparison with   lower bounds on sample complexity }\label{sec: lower bound}
\subsection{Lower bound on the rate with $d$}
Below we restate   Example 5.21 of \cite{shapiro2021lectures}, which provides a lower sample complexity for SAA. Consider a special SP problem \eqref{Eq: SP problem statement} with $f(\xbf,\xi):=\Vert \xbf\Vert_2^{2m}-2m\xi^\top\xbf$, where $m\geq 1$  and $\X:=\{\xbf\in\R^d:\Vert\xbf\Vert_2\leq 1\}$.  Suppose, further, that random vector $\xi$ has normal distribution $\mathcal N(0,\sigma^2I_d)$, where $\sigma^2\geq 1$ and $I_d$ is the $d$-by-$d$ identity matrix.  For convenience, we let  $d\geq 8$. We use $O(1)$ to denote universal constants.

As per \cite{shapiro2021lectures}, the event that an exact optimal solution to SAA \eqref{Eq: SAA} ensures $\epsilon$-suboptimality with $\epsilon\in(0,1)$ in solving the population-level SP problem  coincides with the event that $\{\chi_{d}^2\leq N\epsilon^{2/v}/\sigma^2\}$, where $v:=\frac{2m}{2m-1}$ and  $\chi_{d}^2$ denotes the chi-square distribution with $d$ degrees of freedom.     Thus, if we let $m=\ln(1/\epsilon)$, one can derive the following lower bound---the sample size is required to satisfy:
\begin{align}
N>\frac{d\sigma^2}{\epsilon^{2/v}}=\frac{d\sigma^2}{\epsilon^{2-\frac{1}{\ln(1/\epsilon)}}}=\frac{ d\cdot \sigma^2}{e\cdot\epsilon^{2}}\label{testnew},
\end{align}
in order to ensure that the SAA solution is an $\epsilon$-suboptimal solution (for $\epsilon\in(0,1)$) to the true problem with probability at least $1-\beta$ (for $\beta\in(0,\,0.3)$).  The above then identifies an ``adversarial'' case of   the sample complexity, showing that SAA's sample complexity  should grow no slower than the rate of $O(d)$ in general.

The lower bound in \eqref{testnew} is not at odds with our results.  For instance, when invoking Part (b) of Theorem \ref{Most explicit result here}, one may let $q'=2$ and $R^*=1$. It then can be verified that $c_l\cdot m^2d\sigma^2\leq \varphi^2\leq  c_u \cdot m^2d\sigma^2$ for some constants $c_u\geq c_l>0$. As a result, the  sample complexity   becomes:
\begin{align}
O(1)\cdot\frac{   m^2d\sigma^2}{ \epsilon^2} \cdot   \ln \left(\Phi_1\ln\frac{e}{\beta}\right)\cdot\ln^2\frac{\Phi_1}{\beta},\label{test bound}
\end{align}
where $\Phi_1\leq \frac{O(1) m  \sigma\sqrt{d}}{ \epsilon}$. Here \eqref{test bound} 
is consistent with the   sample complexity lower bound in \eqref{testnew}.  

The analysis above does not rule out the possibility of a better-than-polynomial growth rate with $d$ under some additional problem structural assumption.  Indeed,   we are able to identify an important special case where our new sample bounds can be dimension insensitive and thus surpass \eqref{testnew}, as presented below. 

Consider a modification to the example above---we assume  instead that $\X:=\{\xbf\in\R^d:\Vert\xbf\Vert_{1}\leq 1\}$ and keep all other settings the same. In evaluating the conditions for  Part (b) of Corollary \ref{most explicit}  for this modified example,  we let $R^*=e$ and $q'= 1+( \ln d-1)^{-1}$. One may also verify that
$
 Y_i:= 2m\cdot (1+\vert \xi_i\vert)\geq \sup_{\xbf\in \X}\vert \nabla_i f(\xbf,\xi)\vert=\sup_{\xbf\in \X}\left\vert 2m\Vert\xbf\Vert_2^{2m-2}x_i-2m\xi_i\vphantom{V^{V^V}}\right\vert.$
Hence, for all $t\ge 0$ and all $i=1,...,d$,
\begin{equation}\label{eq:Yi-all-t}
\Prob\{\,Y_i \le t\,\}
\;=\;
\Prob\!\left\{\,2m\bigl(1+|\xi_i|\bigr)\le t\,\right\}
\;=\;
\Prob\!\left\{\,|\xi_i|\le \frac{(t-2m)_+}{2m}\right\}
\;\ge\;
1-2\exp\!\left(-\,\frac{((t-2m)_+)^{2}}{8\,m^{2}\sigma^{2}}\right),
\end{equation}
where $(x)_+=\max\{x,0\}$ and the last inequality applies the Gaussian tail bound
$\Prob\{Z\ge u\}\le e^{-u^{2}/2}$ for $Z\sim\mathcal N(0,1)$, $u\ge 0$.  Consider the case with  $t\ge 4m    \iff t-2m\ge t/2$. Plugging this into
\eqref{eq:Yi-all-t} yields
$
\Prob\{Y_i>t\}
\;\le\; 2\exp\!\left(-\,\frac{(t/2)^{2}}{8\,m^{2}\sigma^{2}}\right)
\;=\; 2\exp\!\left(-\,\frac{t^{2}}{32\,m^{2}\sigma^{2}}\right).$
Meanwhile, consider the case with $0\leq t< 4m$ (in view of $\sigma\geq 1$). We have $\Prob\{Y_i>t\}\leq 1\leq \; 2\exp\!\left(-\,\frac{t^{2}}{32\,m^{2}\sigma^{2}}\right)$. Aggregating the two cases, we then have $2m\cdot (1+\vert \xi_i\vert)$, which overestimates $\sup_{\xbf\in \X}\vert \nabla_i f(\xbf,\xi)\vert$, to satisfy $\Prob\big[2m\cdot (1+\vert \xi_i\vert)\leq t\big]\geq 1-2\exp\!\left(-\,\frac{t^{2}}{32\,m^{2}\sigma^{2}}\right)$ for all $i=1,...,d$.  {\color{black}Invoking   Corollary \ref{most explicit}.(b) with $\varphi_{f,\infty}:= O(1)\cdot  m \sigma$,
 we have that the sample complexity bound  becomes }
\begin{align}
O(1)\cdot\frac{   m^2\sigma^2\ln^{\color{black}{2}} d }{ \epsilon^2} \cdot   \ln \left(\Phi_2\ln\frac{e}{\beta}\right)\cdot\ln^2\frac{\Phi_2}{\beta},\label{test bound v2}
\end{align}
where $\Phi_2\leq \frac{O(1) m\cdot   \sigma\cdot  {\ln d}}{ \epsilon}$. The sample complexity result in \eqref{test bound v2} surpasses the lower bound in \eqref{testnew} with respect to the dependence on $d$. In further contrast, the current SAA's sample complexity benchmark in \eqref{reduced rate} with $\varphi:= O(1)\cdot m\cdot \sigma\cdot \sqrt{\ln d}$ yields the following:
\begin{align}
O(1) \cdot \frac{m^2\sigma^2\ln d}{\epsilon^2} \left[   d\cdot  \ln\left(\frac{O(1)\cdot  m\sigma\ln d}{\epsilon}\right)+\ln\frac{1}{\beta}\right].\label{test conventional bound}
\end{align}
One can see that \eqref{test bound v2} is   more efficient than \eqref{test conventional bound}   for sufficiently large $d$ (with other parameters fixed). 
A closely similar  argument as above can show that our findings are not at odds with, and can sometimes even surpass, the lower sample complexity bounds by \cite{guigues2017non}.
{\color{black}
\subsection{A lower bound on the rate with $\beta$}\label{beta subsection lower bound} 
In heavy-tailed settings when only the second moment of the underlying randomness is bounded, it is possible to show that the rate of $O(1/\beta)$ in Theorems \ref{thm: suboptimality}  and \ref{thm: first main theorem} (with $p=2$) are at least asymptotically optimal. More specifically, we have the following proposition, where we note that an exact solution to SAA refers  to a $(0,\,q)$-approximate solution in the sense of \eqref{suboptimality in computing SAA}. The choice of $q\in(1,2]$ is then arbitrary.
   \begin{proposition}\label{lower bound}
 The following three statements hold:
   \begin{itemize}
   \item[(i)] Consider an exact solution $\widehat \xbf$ to  SAA \eqref{Eq: SAA}. For given   $\epsilon>0$ and   $N\geq 1$, under the same set of assumptions as in Theorem \ref{thm: suboptimality}, there exists  an instance of   \eqref{Eq: SP problem statement} where $ \Prob\left[ F(\widehat \xbf)-F(\xbf^*)\geq \epsilon\right]  \geq \frac{1}{4}\cdot \min\left\{1,\,   \frac{\sigma_p^{2}}{ \mu\cdot\epsilon \cdot  N}\right\}.$
   
    \item[(ii)] Consider an exact solution $\widehat \xbf$ to  SAA \eqref{Eq: SAA-ell2} with hyperparameters specified  as per \eqref{parameter setting}. For a given   $\epsilon>0$  and  $N\geq 1$, under the same set of assumptions as in Theorem \ref{thm: second main theorem convex optimal rate},   there exists an instance of \eqref{Eq: SP problem statement} where $ \Prob\left[ F(\widehat \xbf)-F(\xbf^*)\geq \epsilon\right]  \geq \frac{1}{4}\cdot \min\left\{1,\,   \frac{4}{9}\cdot\frac{\sigma_p^{2}}{ \epsilon^2 \cdot  N}\right\}.$
    
   \item[(iii)] Consider an exact solution $\widehat \xbf$ to  SAA \eqref{Eq: SAA}. For a given   $\vartheta>0$  and   $N\geq 1$, under the same set of assumptions as in Theorem \ref{thm: first main theorem} with $p=2$, there exists an instance of \eqref{Eq: SP problem statement} where $  \Prob\left[ \Vert \widehat \xbf- \xbf^*\Vert_q^2\geq \vartheta\right]  \geq \frac{1}{4}\cdot \min\left\{1,\,   \frac{2\psi_2^{2}}{ \mu^2\cdot\vartheta \cdot  N}\right\}.$
   \end{itemize}
     \end{proposition}
     
     \begin{proof}  Consider $f:\,\R\times \R\rightarrow\R$ defined as $f(\xbf,\xi):=\frac{\mu}{2}(x-\xi)^2$, where  $\xi$ is  a discrete random variable with probability mass function: 
\[\text{$\Prob[\xi=y]=\varpi/2$ for $y=\pm\sqrt{2}\cdot(1+\lambda_0/\mu)\cdot  N\sqrt{\epsilon/ \mu}$ and $\Prob[\xi=y]=1-\varpi$ for $y=0$}.\] 
Since $\E[\xi]=0$, the SP problem in \eqref{Eq: SP problem statement} is then substantiated into:
     \begin{align}
     \min_{\xbf\in\R}F(\xbf):=\E\Big[ \frac{\mu}{2}(\xbf-\xi)^2\Big]=\E\Big[ \frac{\mu}{2}(\xbf^2+\xi^2)\Big]\label{adversarial SP}
     \end{align}
  We further let $\varpi:=\min\left\{0.5/N,\,{\sigma_p^2}\cdot [2\mu\cdot (1+\lambda_0/\mu)^2 N^2\epsilon]^{-1}\right\}$  for $p=q/(q-1)$.  It is easy to verify that, by construction, $f(\cdot,\xi)$ is $\mu$-strongly convex w.r.t. the $q$-norm for any choices of $q\in(1,2]$. Hence, Assumption \ref{SC condition constant all} holds and the optimal solution to \eqref{adversarial SP} is $\xbf^*=0$. Meanwhile, one may easily derive that   $\nabla F(\xbf):=\mu \xbf$ and $\nabla f(\xbf,\xi)={\mu}\cdot (\xbf-\xi)$. As a result, Assumption \ref{assumption: Variance everywhere} holds with
 \begin{align}\sup_{\xbf\in\R}\E\left[\Vert \nabla F(\xbf)-\nabla f(\xbf,\xi)\Vert^2_p\right]=\mu^2\E[(\xi-\E[\xi])^2]\leq\sigma_p^2.\label{sigma result}
 \end{align}
 

     Let $\xi_1,...,\xi_N$  be i.i.d. copies   of $\xi$.  For any given $\lambda_0\in[0,0.5 \epsilon]$, consider    
     \begin{align}
 \min_{\xbf\in\R}  N^{-1} \sum_{j=1}^N \frac{\mu}{2}(\xbf-\xi_j)^2+\frac{\lambda_0}{2}\cdot\xbf^2.\label{template SAA}
 \end{align}
 This problem subsumes both   SAAs \eqref{Eq: SAA} and \eqref{Eq: SAA-ell2} for the following configurations:
 \begin{itemize}
 \item \eqref{template SAA} becomes  SAA \eqref{Eq: SAA} when $\lambda_0=0$;
 \item  \eqref{template SAA}  becomes SAA  \eqref{Eq: SAA-ell2}  when $\lambda_0=0.5\epsilon/R^*$. Here,  w.l.o.g., we let  $\xbf^0=0$ for the Tikhonov-like regularization. 
 \end{itemize}
 A closed-form optimal solution to \eqref{template SAA} writes as: $\widehat \xbf=(1+\lambda_0/\mu)^{-1}\cdot N^{-1} \sum_{j=1}^N  \xi_j$. We know that 
    $
     F(\widehat \xbf)-F(\xbf^*)=\frac{\mu}{2}\cdot\widehat \xbf^2=\frac{\mu}{2\cdot(1+\lambda_0/\mu)^2}\cdot \left(N^{-1} \sum_{j=1}^N  \xi_j\right)^2$.
  Thus, the event that $\left\{\left\vert N^{-1} \sum_{j=1}^N  \xi_j\right\vert\geq \sqrt{2}\cdot(1+\lambda_0/\mu)\cdot   \sqrt{\epsilon/ \mu}\right\}$ is equivalent to  $\{F(\widehat \xbf)-F(\xbf^*)\geq \epsilon\}$. Denote by $K\sim Binomial (N,\varpi)$. Note that
     \begin{align}
   &  \Prob[F(\widehat \xbf)-F(\xbf^*)\geq \epsilon]=\Prob\left[\frac{\mu}{2} \xhb^2\geq  \epsilon\right]\nonumber
     \\=& \Prob\left[\left\vert N^{-1} \sum_{j=1}^N  \xi_j\right\vert\geq \sqrt{2}\cdot(1+\lambda_0/\mu)\cdot   \sqrt{\epsilon/ \mu}\right] \nonumber
     \\=& \Prob\left[\left\vert \{i:\,\xi_i=\sqrt{2}\cdot(1+\lambda_0/\mu)\cdot  N\sqrt{\epsilon/ \mu}\}\right\vert-\left\vert \{i:\,\xi_i=-\sqrt{2}\cdot(1+\lambda_0/\mu)\cdot  N\sqrt{\epsilon/ \mu}\}\right\vert\neq 0\right]  \nonumber
     \\\geq& \Prob\left[K=1\right]  =1-\Prob[K=0]-\Prob[K\geq 2] \nonumber
      \\\geq & 1-(1-\varpi)^N-{N \choose 2}\cdot \varpi^2 \overset{\substack{\text{\tiny Second-order}\\\text{\tiny Bonferroni}}}{\geq} N\varpi-2{N \choose 2}\cdot \varpi^2. \label{second last step lower bound}
      \end{align}
  By the construction of $\varpi$, we have $N\varpi=\min\left\{0.5,\,\frac{\sigma_p^{2}}{(2\mu\cdot\epsilon)\cdot (1+\lambda_0/\mu)^{2} N}\right\}$ and $2{N \choose 2}\cdot \varpi^2=N(N-1)\cdot \left(\min\left\{0.5/N,\,\frac{\sigma_p^{2}}{(2\mu\cdot\epsilon)\cdot (1+\lambda_0/\mu)^{2} N^2}\right\}\right)^2\leq 0.5\cdot    \min\left\{0.5,\,\frac{\sigma_p^{2}}{(2\mu\cdot\epsilon)\cdot (1+\lambda_0/\mu)^{2} N}\right\}$. This combined with Eq.\,\eqref{second last step lower bound} then implies that 
 \begin{align}\Prob\left[ F(\widehat \xbf)-F(\xbf^*)\geq \epsilon\right] =\Prob\left[\left\vert N^{-1} \sum_{j=1}^N  \xi_j\right\vert\geq \sqrt{2}\cdot(1+\lambda_0/\mu)\cdot   \sqrt{\epsilon/ \mu}\right]  \geq \frac{1}{4}  \min\left\{1,\,  \frac{\sigma_p^{2}}{\mu \epsilon  (1+\lambda_0/\mu)^{2} N}\right\}\label{general result lower bound}
 \end{align}

\noindent [{\it For Part (i)}]:
This part of the proposition considers SAA \eqref{Eq: SAA}; namely,  $\lambda_0=0$. Then, \eqref{general result lower bound} immediately implies the desired result.
\smallskip

\noindent [{\it For Part (ii)}]: This part of the proposition considers SAA \eqref{Eq: SAA-ell2} for $\lambda_0$ specified as in \eqref{parameter setting}, which is consistent with  selecting $R^*\geq 1$ and $\lambda_0\in(0,\, \epsilon/2]$. Let $\mu=\epsilon$; and thus, verifiably,  $f(\cdot,\xi)$  satisfies Assumption \ref{GC condition constant all}.
Then invoking \eqref{general result lower bound}  with $\frac{1}{4}  \min\left\{1,\,  \frac{\sigma_p^{2}}{\mu \epsilon  (1+\lambda_0/\mu)^{2} N}\right\}\geq \frac{1}{4}  \min\left\{1,\, (1+0.5)^{-2}\cdot \frac{\sigma_p^{2}}{\mu \epsilon   N}\right\}$ implies   Part (ii) of this proposition.

\smallskip

\noindent [{\it For Part (iii)}]:
Let $\lambda_0=0$. One may easily verify that Assumption \ref{SC condition constant}.(a) holds by construction. Eq.\,\eqref{sigma result} implies Assumption \ref{SC condition constant}.(b) (with   $p:=2$) and $\psi_2:=\sigma_p$. Since $ \Prob\left[ (\widehat \xbf-\xbf^*)^2\geq 2\epsilon/\mu\right] =\Prob\left[\left\vert N^{-1} \sum_{j=1}^N  \xi_j\right\vert\geq \sqrt{2 \epsilon/ \mu}\right],$ we  let $\epsilon:=\vartheta\mu/2$ in \eqref{general result lower bound} to show the claimed.
\end{proof}

    \bigskip
      
 The proposition above shows that  the rate of $O(1/\beta)$   in our sample complexity  bounds  in Theorems \ref{thm: suboptimality}, \ref{thm: second main theorem convex optimal rate}, and \ref{thm: first main theorem} (for $p=2$) is intrinsic to the SAA,  rather than a proof artifact. 
}
\vspace{-3mm}
{
\color{black}
\section{Additional details on hyperparameter selection for SMD}\label{details SMD stepsize}
\sloppy\raggedbottom
Recall that $G(\xbf,\xi)$ denotes a subgradient of $f(\cdot,\xi)$ at $\xbf$. In determining $\widetilde M_\infty$ for SMD-L$_1$, we approximated $\E[\sup_{\Vert \xbf \Vert_1\leq R_{\ell_1}}\Vert G(\xbf,\xi)\Vert_\infty]$ by $\frac{1}{500}\sum_{j=1}^{500} \sup_{  \Vert \xbf \Vert_1\leq R_{\ell_1}}\Vert  G(\xbf,\xi_j)\Vert_\infty$, where $\xi_j$, $j=1,...,500$, is an i.i.d. sample of $\xi$. Likewise, $\widetilde M_2$ was estimated by $\frac{1}{500}\sum_{j=1}^{500} \sup_{  \Vert \xbf \Vert_2\leq R_{\ell_2}}\Vert G(\xbf,\xi_j)\Vert_2$. Meanwhile, we performed cross-validation similar to Section \ref{sec: experiment on light-tailed problem} in  selecting  $\theta$ from a set of candidate  values $\big\{a\cdot b:\,a\in\{1,2,...,9\},\,b\in\{0.1,\,1,\,10,\,100,\, 1000\}\big\}$. More specifically, we set    $d=1000$  and $N=600$ and simulated two i.i.d. random samples of $\xi$---one for optimization of \eqref{stochastic nonsmooth convex problem} and one for ``validation''. An SMD variant was invoked to operate using the ``optimization''  set. The generated solution, denoted by $\boldsymbol x_{\theta}$,    was then evaluated on the validation set, denoted by $\widehat\xi_j$, $j=1,...,600$,  in terms of the average cost value calculated as $\frac{1}{600}\sum_{j=1}^{600}f( \boldsymbol x_{\theta},\widehat\xi_j)$. Fixing   $\theta$ to  each of its candidate values, we repeated the process above for five independent runs. The  $\theta$-value that led an SMD variation to generate the best (smallest) average validation performance was  then selected for the corresponding SMD variation. 

}

\end{appendices} 

\end{document}